\newtheorem{thm}{Theorem}[section]
\newtheorem{prp}[thm]{Proposition}
\newtheorem{lem}[thm]{Lemma}
\newtheorem{cor}[thm]{Corollary}
\theoremstyle{definition}
\theoremstyle{remark}
\newtheorem{rmk}[thm]{Remark}
\numberwithin{equation}{section}
\newcommand{\Pp}{\mathcal{P}}
\newcommand{\CC}{\mathbb{C}}
\newcommand{\NN}{\mathbb{N}}
\newcommand{\TT}{\mathbb{T}}
\newcommand{\ZZ}{\mathbb{Z}}
\newcommand{\id}{\operatorname{id}}
\newcommand{\lsp}{\operatorname{span}}
\newcommand{\im}{\operatorname{Im}}
\newcommand{\coker}{\operatorname{coker}}
\newcommand{\tor}{\operatorname{tor}}
\newcommand{\rank}{\operatorname{rank}}
\newcommand{\lcm}{\operatorname{lcm}}
\title[The classification of some generalised Bunce--Deddens algebras]{The classification of some generalised Bunce--Deddens algebras}
\author{James Rout}
\address{School of Mathematics and Applied Statistics\\
University of Wollongong\\
Wollongong NSW 2522\\ Australia}
\email{jdr749@uowmail.edu.au}
\date{\today}
\subjclass[2010]{46L35 (primary); 46L80 (secondary)}
\keywords{graph $C^{*}$-algebra; Bunce--Deddens algebra; $K$-theory; classification}
\thanks{This research is supported by an Australian Government Research Training Program (RTP) Scholarship.}
\begin{document}

\begin{abstract}
We use $K$-theory to prove an isomorphism theorem for a large class of generalised Bunce--Deddens algebras constructed by Kribs and Solel from a directed graph $E$ and a sequence $\omega$ of positive integers.  In particular, we compute the torsion-free component of the $K_0$-group for a class of generalised Bunce--Deddens algebras to show that supernatural numbers are a complete invariant for this class.
\end{abstract}

\maketitle

\section{Introduction}

In \cite{KribsSolel:JAMS07} Kribs and Solel introduced a family of direct limit $C^*$-algebras constructed from directed graphs $E$ and sequences $\omega = (n_k)_{k=1}^\infty$ of natural numbers such that $n_k | n_{k+1}$ for all $k \in \NN$. They called these $C^*$-algebras generalised Bunce--Deddens algebras. The graph $E$ consisting of a single vertex connected by a single loop-edge generates the classical Bunce--Deddens algebras.

Supernatural numbers have been used to classify UHF algebras (\cite[Theorem~1.12]{Glimm:TAMS60}) and the classical Bunce--Deddens algebras (\cite[Theorem~3.7]{BunceDeddens73} and \cite[Theorem~4]{BunceDeddens75}). Kribs showed in \cite[Theorem~5.1]{Kribs02} that the generalised Bunce--Deddens algebras corresponding to the graph $B_N$ consisting of a single vertex with $N$ loops, are classified by their associated supernatural numbers in the sense that $C^*(B_N,\omega) \cong C^*(B_N,\omega')$ if and only if $[\omega] = [\omega']$. The special case $N = 1$ is Bunce and Deddens' theorem. Kribs and Solel later showed in \cite[Theorem~7.5]{KribsSolel:JAMS07} that the generalised Bunce--Deddens algebras corresponding to the simple cycle with $j$ edges, are classified by their associated supernatural numbers; again the special case $j = 1$ is the original result of Bunce and Deddens. Kribs and Solel asked in \cite[Remark~7.7]{KribsSolel:JAMS07} for what class of graphs $E$ a similar classification theorem could be obtained. Here we prove that such a theorem can be obtained for the class of generalised Bunce--Deddens algebras corresponding to a given strongly connected finite directed graph $E$ such that $1$ is an eigenvalue of the vertex matrix, and the only roots of unity that are eigenvalues are the $\Pp_E$-th roots of unity, where $\Pp_E$ is the period of the graph $E$.

In \cite[Proposition~3.11]{RobertsonRoutSims} it was shown that if $[\omega]=[\omega']$ then $C^*(E,\omega) \cong C^*(E,\omega')$ for row-finite directed graphs $E$ with no sinks or sources. The main result of this article (Theorem~\ref{thm:classification}) shows that if $C^*(E,\omega) \cong C^*(E,\omega')$ then $[\omega] = [\omega']$  for strongly connected finite directed graphs $E$ such that $1$ is an eigenvalue of $A_E^t$ and such that the only roots of unity that are eigenvalues of $A_E^t$ are the $\Pp_E$-th roots of unity. We prove this by studying the torsion-free component of $K_0(C^*(E,\omega))$; we assume that $1$ is an eigenvalue of $A_E^t$ to ensure that this is nontrivial. The Perron--Frobenius theorem (see \cite[Theorem~8.2.1]{Gantmacher:MatrixTheory}) says that if $1$ is an eigenvalue of $A_E^t$, then the $\Pp_E$-th roots of unity are also eigenvalues of $A_E^t$. The hypothesis that these are the only roots of unity that are eigenvalues of $A_E^t$ is nontrivial. The \emph{nonnegative inverse eigenvalue problem} asks which sets of $n$ complex numbers $\lambda_1, \dots, \lambda_n$ occur as the eigenvalues of some $n \times n$ nonnegative matrix. Deep results of \cite{KimOrmesRoush} regarding this problem show that it is possible for any collection of roots of unity to appear as eigenvalues of a nonnegative matrix.

If $1$ is not an eigenvalue of $A_E^t$, then $K_0({C^*(E,\omega)})$ is purely torsion and another argument (perhaps along the lines of \cite[Theorem~5.1]{Kribs02}) will be needed. We have not addressed that case in this article.

We begin in Section~\ref{PerronFrobenius} with some calculations for the sums of powers of matrices and about cokernels. We show that the matrix $\sum_{i=0}^{n_k/l-1} (A_E^{il})^t$, where $l := \lim_{j \to \infty} \gcd(\Pp_E,n_j)$ and $\gcd(\Pp_E,n_k) = l$, is invertible if the only eigenvalues of $A_E^t$ are the $\Pp_E$-th roots of unity (Lemma~\ref{cmkdet}). We recall the equivalence relation $\sim_l$ on $E^0$ established in \cite[Lemma~4.2]{RobertsonRoutSims} to show that $\coker(1-A_E^l)^t \cong \bigoplus_{i=1}^l \coker(1-A_E^t)$ (Corollary~\ref{C*(E(l))1}).

In Section~\ref{sec:K1} we compute $K_1(C^*(E,\omega))$ for strongly connected finite directed graphs $E$ such that the only roots of unity that are eigenvalues of $A_E^t$ are the $\Pp_E$-th roots of unity. We show that the torsion-free component is isomorphic to $l$ copies of $K_1(C^*(E))$ (Theorem~\ref{K_1 limit}). We do this by showing that $\ker(1-A_E(n))^t \cong \ker(1-A_E^n)^t$ for $n \geq 1$ (Lemma~\ref{lem:keriso}), and by showing that $K_1(C^*(E(n_k)) \to K_1(C^*(E(n_{k+1}))$ induces the identity map on $\ker(1-A_E^l)^t \cong \bigoplus_{i=1}^l \ker(1-A_E^t)$ for all $k$ such that $\gcd(\Pp_E,n_k) = l$.

In Section~\ref{sec:K0} we compute the torsion-free component of $K_0(C^*(E,\omega))$  for strongly connected finite directed graphs $E$ such that $1$ is an eigenvalue of $A_E^t$ and such that the only roots of unity that are eigenvalues of $A_E^t$ are the $\Pp_E$-th roots of unity. We show that this group is isomorphic to $l$ copies of the torsion-free component of $K_0(C^*(E))$ adjoined the supernatural number $[\omega]$ associated to $\omega$ (Theorem~\ref{supchar}). We do this by showing that $\coker(1-A_E(n)^t) \cong \coker(1-A_E^n)^t$ for $n \geq 1$ (Lemma~\ref{cokeriso}), and by showing that the map $K_0(C^*(E(n_k)) \to K_0(C^*(E(n_{k+1}))$ induces the multiplication by $n_{k+1} / n_k$ map on $\coker(1-A_E^l)^t \cong \bigoplus_{i=1}^l \coker(1-A_E^t)$ modulo torsion (Proposition~\ref{mult}).

Finally, in Section~\ref{sec:class} we prove that if $C^*(E,\omega) \cong C^*(E,\omega')$, then $[\omega] = [\omega']$ for strongly connected finite directed graphs $E$ such that the only roots of unity that are eigenvalues of $A_E^t$ are the $\Pp_E$-th roots of unity (Theorem~\ref{thm:classification}). We prove this by recovering the supernatural number $[\omega]$ associated to $\omega$ from the torsion-free component of $K_0(C^*(E,\omega))$ (Theorem~\ref{omegarecovery}).

\section{Background}

\subsection{Directed graphs and their $C^*$-algebras}

We use the convention for graph $C^*$-algebras appearing in Raeburn's book
\cite{Raeburn:Graphalgebras05}. So if $E = (E^0, E^1, r, s)$ is a directed graph, then a
path in $E$ is a word $\mu = e_1\dots e_n$ in $E^1$ such that $s(e_i) = r(e_{i+1})$ for
all $i$, and we write $r(\mu) = r(e_1)$, $s(\mu) = s(e_n)$, and $|\mu| = n$. As usual, we
denote by $E^*$ the collection of paths of finite length, and $E^n := \{\mu \in E^* : |\mu| = n\}$; we also write $E^{<n} := \{\mu \in E^* : |\mu| < n\}$. We borrow the convention from the higher-rank graph literature in which we write, for example $vE^*$ for $\{\mu \in E^* : r(\mu) = v\}$, and $v E^1 w$ for $\{e \in E^1 : r(e) = v\text{ and } s(e)= w\}$. The vertex matrix of $E$ is then the $E^0 \times E^0$ integer matrix with $A_E(v,w) = |vE^1w|$.

We say that $E$ is \emph{finite} if $E^0$ is finite, that $E$ is \emph{row-finite} if $vE^1$ is finite for all $v \in E^0$, and that $E$ has no sources if each $vE^1$ is nonempty. A directed graph is \emph{strongly connected} if for every pair of vertices $v,w \in E^0$, there exists $\mu \in E^* \backslash E^0$ such that $r(\mu) = v$ and $s(\mu) = w$. The vertex matrix $A_E$ is irreducible if and only if the graph $E$ is strongly connected. The \emph{period} $\Pp_E$ of a strongly connected directed graph $E$ is given by $\Pp_E = \gcd\{|\mu| : \mu \in E^*, r(\mu) = s(\mu)\}$ (see for example \cite[Section~6]{LacaLarsenEtAl:xx14} with $k=1$). The group $\Pp_E\ZZ$ is then equal to the subgroup generated by $\{|\mu| : \mu \in v E^* v\}$ for any vertex $v$ of $E$, and so is equal to $\{|\mu| - |\nu| : \mu,\nu \in v E^* v\}$ for any $v$.

If $E$ is finite or row-finite and has no sources, then a \emph{Cuntz--Krieger $E$-family} in a $C^*$-algebra $A$ is a pair $(s,p)$, where $s = \{s_e : e \in E^1\} \subseteq A$ is a
collection of partial isometries and $p=\{p_v : v \in E^0\} \subseteq A$ is a set of mutually orthogonal projections such that $s^*_e s_e = p_{s(e)}$ for all $e \in E^1$, and $p_v = \sum_{e \in vE^1} s_e s^*_e$ for all $v \in E^0$.

The \emph{graph algebra} $C^*(E)$ is the universal $C^*$-algebra generated by a Cuntz--Krieger $E$-family \cite[Proposition~1.21]{Raeburn:Graphalgebras05}.

Theorem~7.1 of \cite{Raeburn:Graphalgebras05} says that the $K$-theory of $C^*(E)$ is given by \[ K_1(C^*(E)) \cong \ker(1-A_E^t), \quad \text{ and } \quad K_0(C^*(E)) \cong \coker(1-A_E^t).\]

\subsection{Multiplicative sequences and supernatural numbers}

A \emph{multiplicative sequence} is a sequence $\omega = (n_k)_{k=1}^\infty$ of natural numbers
with $n_k | n_{k+1}$ for all $k \in \NN$. We say that a multiplicative sequence $\omega =(n_k)_{k=1}^\infty$
divides a multiplicative sequence $\omega' = (m_j)_{j=1}^\infty$, and write $\omega|\omega'$, if for each $k \in \NN$
there exists $j(k) \in \NN$ such that $n_k|m_{j(k)}$. Define an equivalence relation $\sim$ on $\{(n_k)_{k=1}^\infty :
n_k | n_{k+1} \text{ for all } k \}$ by $\omega \sim \omega'$ if $\omega | \omega'$ and $\omega | \omega'$. The \emph{supernatural number} $[\omega]$ associated to $\omega$ is the collection $[\omega] := \{ \omega': \omega| \omega' \text{ and } \omega'|\omega \}$.

\subsection{Generalised Bunce--Deddens algebras}

Let $E = (E^0, E^1, r, s)$ be a row-finite directed graph with
no sources, and fix $n \geq 1$. Define sets
\[
E(n)^0 := E^{<n} \qquad \text{ and } \qquad E(n)^1 := \{(e,\mu) : e \in E^1, \mu \in s(e)E^{<n}\},
\] and maps
\[
s_n(e,\mu) := \mu\qquad\text{ and }\qquad
r_n(e,\mu) = \begin{cases}
        e\mu &\text{ if $|\mu| < n-1$} \\
        r(e) &\text{ if $|\mu| = n-1$.}
    \end{cases}
\]
Then $E(n) = (E(n)^0, E(n)^1, r_n, s_n)$ is a row-finite directed graph with no sources. For  $\mu \in E^*$, we write $[\mu]_n$ for the unique element of $E^{<n}$ such that $\mu = [\mu]_n \mu'$ for some $\mu'$ with $|\mu'|\in n \NN$; we think of $[\mu]_n$ as the residue of $\mu$ modulo $n$.

By Theorem~3.4 and Proposition~3.6 of \cite{RobertsonRoutSims} there exist injective homomorphisms $\tilde j_{n,mn}: C^*(E(n)) \to C^*(E(mn))$ such that \[\tilde j_{n,mn}(s_{n,(e,\mu)}) = \sum_{\tau \in s(e) E^{<mn}, [\tau]_n = \mu} s_{mn,(e,\tau)}, \quad \text{and} \quad \tilde j_{n,mn}(p_{n,\nu}) = \sum_{\tau \in E^{<mn}, [\tau]_n = \nu} p_{mn,\tau}, \] for $n,m \in \NN$ and $e \in E^1$, $\mu \in s(e) E^{<n}$ and $\nu \in E^{<n}$.

Kribs and Solel define the generalised Bunce--Deddens algebra associated to a multiplicative sequence $\omega = (n_k)_{k=1}^\infty$ by \[ C^*(E,\omega) := \varinjlim (C^*(E(n_k)), \tilde j_{n_k,n_{k+1}}).\]

\section{Applications of Perron-Frobenius theory}\label{PerronFrobenius}

In this section we analyse the invertibility of the $|E^0| \times |E^0|$ matrix $\sum_{i=0}^{(n_k/l)-1} (A_E^{il})^t$, where $l =  \gcd(\Pp_E,\omega):=\lim_{j\to\infty} \gcd(\Pp_E,n_j)$ and $k$ is such that $\gcd(\Pp_E,n_k) = l$. We also show that $\coker(1-A_E^l)^t$ is isomorphic to $l$ copies of $\coker(1-A_E^t)$. These results will be very useful when we compute the $K_1(C^*(E,\omega))$ in Section~\ref{sec:K1} and the torsion-free component of the $K_0(C^*(E,\omega))$ in Section~\ref{sec:K0}.

\begin{lem}\label{polyeigen}

For each $n \geq 1$, let $R_n$ be the polynomial over $\CC$ given by $R_n(x) = \sum_{i=0}^{n-1} x^i$. The roots of $R_n$ are the $n$-th roots of unity excluding $1$.

\end{lem}

\begin{proof}
We have $(1-x) R_n(x) = 1-x^n$, so the roots of $(1-x)R_n$ are the $n$-th roots of unity. The only root of $1-x$ is $1$, so every $n$th root of unity other than $1$ is itself a root of $R_n$. Since the degree of $R_n$ is $n-1$, these are all the roots of $R_n$.
\end{proof}

\begin{lem}\label{cmkdet}
Let $E$ be a strongly connected finite directed graph, let $\omega = (n_k)_{k=1}^\infty$ be a multiplicative sequence, and let $l= \gcd(\Pp_E, \omega)$. Then $\Pp_E / l$ and $n_k/l$ are coprime for all $k$ such that $\gcd(\Pp_E,n_k) = l$.
Hence, if the only roots of unity that are eigenvalues of $A_E^t$ are the $\Pp_E$-th
roots of unity, then $0 \not \in \sigma \big(R_{n_k/l} (A_E^{l})^t \big)$ for $k$ such that $\gcd(\Pp_E,n_k) = l$.
\end{lem}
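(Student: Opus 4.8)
The plan is to settle the elementary divisibility claim first, and then derive the spectral statement from it together with Lemma~\ref{polyeigen} and the spectral mapping theorem. To see the first assertion, note that $l$ is well defined because $n_j \mid n_{j+1}$ forces $\gcd(\Pp_E, n_j) \mid \gcd(\Pp_E, n_{j+1})$, so the sequence $(\gcd(\Pp_E, n_j))_j$ increases under divisibility and is bounded by $\Pp_E$, hence is eventually equal to some $l$. Fixing $k$ with $\gcd(\Pp_E, n_k) = l$, the coprimality of $\Pp_E/l$ and $n_k/l$ is the standard fact that dividing two integers by their greatest common divisor yields coprime quotients: any common divisor $d$ of $\Pp_E/l$ and $n_k/l$ makes $dl$ a common divisor of $\Pp_E$ and $n_k$, so $dl \mid \gcd(\Pp_E, n_k) = l$, forcing $d = 1$.

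For the second assertion, write $M := (A_E^t)^l$. Since $R_{n_k/l}$ has scalar coefficients, transposition commutes with its evaluation, so $R_{n_k/l}(A_E^l)^t = R_{n_k/l}(M)$, and by the spectral mapping theorem $0 \in \sigma(R_{n_k/l}(M))$ exactly when $R_{n_k/l}(\nu) = 0$ for some $\nu \in \sigma(M)$. By Lemma~\ref{polyeigen} this occurs precisely when some eigenvalue $\nu$ of $M$ is an $(n_k/l)$-th root of unity with $\nu \neq 1$, so it suffices to rule out such an eigenvalue.

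Suppose towards a contradiction that $\nu \in \sigma(M)$ is an $(n_k/l)$-th root of unity with $\nu \neq 1$. Every eigenvalue of $M = (A_E^t)^l$ has the form $\mu^l$ for some $\mu \in \sigma(A_E^t)$, so $\nu = \mu^l$ and $\mu^{n_k} = \nu^{n_k/l} = 1$. Thus $\mu$ is a root of unity that is an eigenvalue of $A_E^t$, whence by hypothesis $\mu^{\Pp_E} = 1$; since $l \mid \Pp_E$ this gives $\nu^{\Pp_E/l} = \mu^{\Pp_E} = 1$. Now $\nu$ is both an $(n_k/l)$-th and a $(\Pp_E/l)$-th root of unity, so the order of $\nu$ divides $\gcd(\Pp_E/l, n_k/l) = 1$ by the first assertion, forcing $\nu = 1$ and contradicting $\nu \neq 1$. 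Hence no eigenvalue of $M$ is a root of $R_{n_k/l}$, and therefore $0 \notin \sigma(R_{n_k/l}(M))$.

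I expect no serious obstacle here: the argument simply combines the order computation for a root of unity with the hypothesis controlling which roots of unity occur as eigenvalues of $A_E^t$. The points needing care are the commutation of transposition with polynomial evaluation, the passage between $\sigma(A_E^t)$ and $\sigma((A_E^t)^l)$ via the spectral mapping theorem, and the observation that it is precisely the choice $\gcd(\Pp_E, n_k) = l$ that makes $\mu^{n_k} = 1$ and $\mu^{\Pp_E} = 1$ collapse to $\nu = \mu^l = 1$.
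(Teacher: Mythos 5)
Your proof is correct and takes essentially the same route as the paper: establish coprimality of $\Pp_E/l$ and $n_k/l$ by the standard gcd argument, then combine Lemma~\ref{polyeigen} with the spectral mapping theorem to rule the roots of $R_{n_k/l}$ out of $\sigma\big((A_E^l)^t\big)$. If anything, your second step is slightly more careful than the paper's: the paper asserts the equality $\sigma\big((A_E^l)^t\big) \cap \TT = \{e^{2\pi i j/(\Pp_E/l)} : j \in \NN \cup \{0\}\}$, which overstates what the hypothesis justifies (nothing excludes unit-modulus eigenvalues that are not roots of unity), whereas you apply the hypothesis only to the genuine root of unity $\mu$ produced by pulling $\nu$ back through the spectral mapping theorem, which is all the argument needs.
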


\begin{proof}
Suppose for contradiction that $k \ge K$ and that $\Pp_E / l$ is not coprime to $n_k/l$. Say $p \not = 1$ satisfies $p|(\Pp_E / l)$ and $p|(n_k/l)$. Then $p l | \Pp_E$ and $p l | n_k$. This implies that $p l \leq l$, which is a contradiction.

For the second statement, we have \[ \sigma((A_E^{l})^t) \cap \TT = \{e^{(2 \pi i j/P_E)l}: j \in \NN \cup \{0\} \} = \{ e^{2 \pi i j /(\Pp_E/l) }: j \in \NN \cup \{0\} \}, \] by the spectral mapping theorem. By Lemma~\ref{polyeigen}, the roots of $R_{n_k/l}$ are the $n_k/l$-th roots of unity. Since $\gcd(\Pp_E / l, n_k/l)=1$, we have that $e^{2 \pi j i/(n_k/l)} \not \in \sigma((A_E^l)^t)$ for any $j \in \NN \cup \{0\}$. So $0 \not \in \sigma(R_{n_k/l}(A_E^l)^t)$.
\end{proof}

\begin{lem}\label{identitycokernel}
Let $E$ be a strongly connected finite directed graph. Then $A_E^t \delta_v + \im(1-A_E^t) = \delta_v + \im(1-A_E^t)$ for all $v \in E^0$.
\end{lem}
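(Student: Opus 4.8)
The plan is to read the claimed equality of cosets inside $\coker(1-A_E^t) = \ZZ^{E^0}/\im(1-A_E^t)$ and reduce it to a single membership statement. Two cosets $a + \im(1-A_E^t)$ and $b + \im(1-A_E^t)$ coincide precisely when $a - b \in \im(1-A_E^t)$, so it suffices to prove that $A_E^t \delta_v - \delta_v \in \im(1-A_E^t)$ for each vertex $v \in E^0$.

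The key observation is then purely algebraic. I would compute
\[
A_E^t \delta_v - \delta_v = (A_E^t - 1)\delta_v = -(1-A_E^t)\delta_v.
\]
Since $\delta_v$ is an element of $\ZZ^{E^0}$ and $1-A_E^t$ is an integer matrix carrying $\ZZ^{E^0}$ into itself, the vector $-(1-A_E^t)\delta_v = (1-A_E^t)(-\delta_v)$ is by definition in the image of $1-A_E^t$. This yields the required membership, and hence the desired equality $A_E^t \delta_v + \im(1-A_E^t) = \delta_v + \im(1-A_E^t)$.

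There is no genuine obstacle here: the statement is an immediate consequence of the definition of the image, and conceptually it records the fact that multiplication by $A_E^t$ descends to the identity map on $\coker(1-A_E^t)$, which is what will be used later. I would also note that the strong-connectedness and finiteness hypotheses play no role in this particular argument; they are the standing assumptions of the section and matter for the subsequent results, but the present lemma holds verbatim for any integer matrix in place of $A_E^t$. Accordingly I would keep the proof to the two displayed lines above and avoid invoking any Perron--Frobenius input.
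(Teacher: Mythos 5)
Your proof is correct and is essentially identical to the paper's: both reduce the coset equality to the membership $\delta_v - A_E^t\delta_v = (1-A_E^t)\delta_v \in \im(1-A_E^t)$ (you merely compute the negative of this vector). Your added remark that strong connectedness and finiteness are not needed here is accurate, but the argument itself is the same one-line computation.
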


\begin{proof}
Fix $v \in E^0$. We have that $\delta_v - A_E^t \delta_v = (1-A_E^t)\delta_v \in \im(1-A_E^t)$, so $A_E^t \delta_v + \im(1-A_E^t) = \delta_v + \im(1-A_E^t)$.
\end{proof}

We now show that $\coker(1-A_E^l)^t \cong \bigoplus_{i=1}^l \coker(1-A_E^t)$. By \cite[Lemma~4.2]{RobertsonRoutSims} there is an equivalence relation $\sim_l$ on $E^0$ such that $v \sim_l w$ if and only if $|\lambda| \in l \ZZ$ for all $\lambda \in v E^*w$.
We enumerate the equivalence classes for $\sim_l$. Fix $v \in E^0$, and let $\Lambda_0 = [v]$. Now iteratively fix $e \in E^1$ with $r(e) \in \Lambda_i$ and let $\Lambda_{i+1} = [s(e)]$, where addition in the subscript is modulo $l$. Then $\Lambda_0, \dots, \Lambda_{l-1}$ is an enumeration of the equivalence classes in $E^0 / \sim_l$.

\begin{lem}\label{dirsuml}
Let $E$ be a strongly connected finite directed graph. Let $\omega = (n_k)_{k=1}^\infty$ be a multiplicative sequence, and let $l := \gcd(P_E, \omega)$.
There is an isomorphism  \[ \Theta: \coker(1-A_E^{l})^t \to \bigoplus_{i=0}^{l-1} \ZZ^{\Lambda_i} / (1-A_E^{l})^t \ZZ^{\Lambda_i} \] satisfying \[\Theta(\delta_v+ \im(1-A_E^l)^t) = (0, \dots, 0, \delta_v + (1-A_E^l)^t \ZZ^{\Lambda_j}, 0, \dots, 0) , \] where $v \in \Lambda_j$ for some $0 \le j \le l-1$ and $\delta_v + (1-A_E^l)^t \ZZ^{\Lambda_j}$ appears in the $j$-th position.
\end{lem}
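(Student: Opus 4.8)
The plan is to exploit the cyclic structure that the relation $\sim_l$ imposes on $A_E^t$. The partition of $E^0$ into the classes $\Lambda_0, \dots, \Lambda_{l-1}$ gives a decomposition $\ZZ^{E^0} = \bigoplus_{i=0}^{l-1}\ZZ^{\Lambda_i}$ which the identity obviously respects; so if I can show that $(A_E^l)^t = (A_E^t)^l$ respects it as well, then $(1-A_E^l)^t$ will preserve each summand $\ZZ^{\Lambda_i}$, and the asserted isomorphism $\Theta$ will be nothing more than the tautological decomposition of the quotient of a direct sum by a direct sum of submodules.

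The key structural step I would carry out first is: for every $e \in E^1$ with $r(e) \in \Lambda_i$ one has $s(e) \in \Lambda_{i+1}$ (subscripts mod $l$). The inclusion holds for the particular edges used to build the enumeration by definition; the content is that \emph{all} edges out of $\Lambda_i$ behave this way. To see this, let $e_0$ be the chosen edge with $r(e_0)\in\Lambda_i$ and $\Lambda_{i+1}=[s(e_0)]$, and let $e$ be any other edge with $r(e)\in\Lambda_i$. Given $\eta \in s(e)E^*s(e_0)$, choose by strong connectivity a path $\mu \in r(e_0)E^*r(e)$; since $r(e_0) \sim_l r(e)$ we have $|\mu|\in l\ZZ$. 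Then $\mu e\eta$ and $e_0$ are both paths from $r(e_0)$ to $s(e_0)$, so their lengths differ by an element of $\Pp_E\ZZ \subseteq l\ZZ$, whence $|\mu|+|\eta| \in l\ZZ$ and therefore $|\eta|\in l\ZZ$. The analogous computation for paths in $s(e_0)E^*s(e)$ gives $s(e) \sim_l s(e_0)$, that is, $s(e)\in\Lambda_{i+1}$.

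With this in hand the rest is formal. Since $(A_E^t\delta_v)(w) = A_E(v,w) = |vE^1w|$ is nonzero only when some edge runs from $v$ to $w$, the shift property gives $A_E^t(\ZZ^{\Lambda_i}) \subseteq \ZZ^{\Lambda_{i+1}}$, and composing $l$ such shifts shows that $(A_E^l)^t$ carries each $\ZZ^{\Lambda_i}$ into itself. Hence $(1-A_E^l)^t$ is block diagonal for $\ZZ^{E^0} = \bigoplus_i \ZZ^{\Lambda_i}$, so $\im(1-A_E^l)^t = \bigoplus_{i=0}^{l-1}(1-A_E^l)^t\ZZ^{\Lambda_i}$. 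Quotienting $\bigoplus_i \ZZ^{\Lambda_i}$ by this direct sum of submodules yields the natural isomorphism $\coker(1-A_E^l)^t \cong \bigoplus_{i=0}^{l-1} \ZZ^{\Lambda_i}/(1-A_E^l)^t\ZZ^{\Lambda_i}$, and tracing a basis vector $\delta_v$ with $v\in\Lambda_j$ through this identification (it lies entirely in the $j$-th summand) produces exactly the stated formula for $\Theta$.

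The only genuinely nontrivial ingredient is the edge/shift property; everything downstream is routine direct-sum bookkeeping. I expect the main subtlety there to be the period argument, which relies on the fact that any two paths between the same ordered pair of vertices have lengths differing by an element of $\Pp_E\ZZ$ (equivalently, that the length-difference subgroup of $vE^*v$ is $\Pp_E\ZZ$), combined with $l \mid \Pp_E$ so that $\Pp_E\ZZ$ may be replaced by $l\ZZ$ throughout.
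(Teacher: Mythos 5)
Your proof is correct and follows essentially the same route as the paper: decompose $\ZZ^{E^0} = \bigoplus_{i=0}^{l-1}\ZZ^{\Lambda_i}$ along the $\sim_l$-classes, observe that $(1-A_E^l)^t$ preserves each summand, and let the quotient decompose accordingly into the stated direct sum. The only difference is one of detail: you prove the cyclic shift property $A_E^t\ZZ^{\Lambda_i} \subseteq \ZZ^{\Lambda_{i+1}}$ (equivalently, the well-definedness of the enumeration $\Lambda_0,\dots,\Lambda_{l-1}$) from strong connectivity and the fact that $l \mid \Pp_E$, whereas the paper asserts $(A_E^l)^t\delta_v \in \ZZ^{\Lambda_j}$ directly from the choice of the $\Lambda_i$ and the properties of $\sim_l$ imported from \cite[Lemma~4.2]{RobertsonRoutSims}.
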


\begin{proof}

Fix $ 0 \le j \le l-1$, and $v \in \Lambda_j$. Since $E^0 = \bigsqcup_{i=0}^{l-1} \Lambda_i$, there is an isomorphism $\theta:\ZZ^{E^0} \to \bigoplus_{i=0}^{l-1} \ZZ^{\Lambda_i}$ such that $\theta(\delta_v) = (0, \dots, 0, \delta_v, 0, \dots, 0)$, where $\delta_v$ is in the $j$-th position.

Our choice of $\Lambda_0, \dots, \Lambda_{l-1}$ ensures that $(A_E^l)^t \delta_v = \sum_{w \in E^0} |v E^l w| \delta_w \in \ZZ^{\Lambda_{j}}$ and so $(1-A_E^l)^t \delta_v \in \ZZ^{\Lambda_j}$. Hence $\theta((1-A_E^l)^t \delta_v) = (0, \dots, 0, (1-A_E^l)^t \delta_v, 0, \dots, 0) \in \bigoplus_{i=0}^{l-1} (1-A_E^{l})^t \ZZ^{\Lambda_i}$.
Therefore $\theta$ descends to an isomorphism $\Theta: \coker(1-A_E^{l})^t \to \bigoplus_{i=0}^{l-1} \ZZ^{\Lambda_i} / (1-A_E^{l})^t \ZZ^{\Lambda_i}$ satisfying the desired formula.
\end{proof}

\begin{lem}\label{isovarup}

Let $E$ be a strongly connected finite directed graph. Let $\omega = (n_k)_{k=1}^\infty$ be a multiplicative sequence, and let $l = \gcd(P_E, \omega)$. For each $0 \le j \le l -1$, there is an isomorphism $\Phi_j: \ZZ^{\Lambda_j} / (1-A_E^l)^t \ZZ^{\Lambda_j} \to \ZZ^{E^0} / \im(1-A_E)^t$ satisfying \[ \Phi_j(\delta_v+(1-A_E^l)^t \ZZ^{\Lambda_j}) = \delta_v+(1-A_E^t) \ZZ^{E^0}, \] for some $v \in \Lambda_j$.

\end{lem}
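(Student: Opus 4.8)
The plan is to build $\Phi_j$ from the block inclusion $\iota_j \colon \ZZ^{\Lambda_j} \hookrightarrow \ZZ^{E^0}$ sending $\ZZ^{\Lambda_j}$ onto the $j$-th summand, followed by the quotient map $q \colon \ZZ^{E^0} \to \ZZ^{E^0}/\im(1-A_E^t)$, and then to check that $q \circ \iota_j$ is surjective with kernel exactly $(1-A_E^l)^t\ZZ^{\Lambda_j}$; the first isomorphism theorem then delivers an isomorphism $\Phi_j$ sending $\delta_v + (1-A_E^l)^t\ZZ^{\Lambda_j}$ to $\delta_v + (1-A_E^t)\ZZ^{E^0}$ for every $v \in \Lambda_j$, as required. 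Write $A := A_E^t$. The one structural fact I need, which follows from the description of $\sim_l$ in \cite[Lemma~4.2]{RobertsonRoutSims} together with the enumeration of $\Lambda_0, \dots, \Lambda_{l-1}$, is the single-step cyclic shift: every edge whose range lies in $\Lambda_i$ has source in $\Lambda_{i+1}$ (subscripts mod $l$), so that $A(\ZZ^{\Lambda_i}) \subseteq \ZZ^{\Lambda_{i+1}}$ and hence $A^l(\ZZ^{\Lambda_j}) \subseteq \ZZ^{\Lambda_j}$; on $\ZZ^{\Lambda_j}$ the operator $1 - A^l$ agrees with $(1-A_E^l)^t$.

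To see that $q \circ \iota_j$ annihilates $(1-A_E^l)^t\ZZ^{\Lambda_j}$, so that $\Phi_j$ is well defined, I would use the telescoping identity
\[ (1-A)\Big(\sum_{t=0}^{l-1} A^t z\Big) = z - A^l z = (1-A^l)z \]
valid for every $z \in \ZZ^{\Lambda_j}$. Its left-hand side lies in $\im(1-A)$, while its right-hand side is $\iota_j\big((1-A_E^l)^t z\big)$; hence $(1-A_E^l)^t\ZZ^{\Lambda_j} \subseteq \ker(q\circ\iota_j)$.

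For surjectivity I would invoke Lemma~\ref{identitycokernel}, which gives $\delta_w + \im(1-A) = A\delta_w + \im(1-A)$ for each vertex $w$. Iterating this and using the shift property, if $w \in \Lambda_m$ then $\delta_w + \im(1-A) = A^{(j-m)\bmod l}\delta_w + \im(1-A)$ with the new representative lying in $\ZZ^{\Lambda_j}$; as the $\delta_w$ generate $\ZZ^{E^0}$, every class in $\coker(1-A)$ is represented in $\iota_j(\ZZ^{\Lambda_j})$. For the reverse kernel inclusion (injectivity), suppose $z \in \ZZ^{\Lambda_j}$ and $\iota_j(z) = (1-A)x$ with $x = \sum_{i=0}^{l-1} x_i$, $x_i \in \ZZ^{\Lambda_i}$. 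Comparing $\Lambda_m$-components, and using $A x_{m-1} \in \ZZ^{\Lambda_m}$, forces $x_m = A x_{m-1}$ for $m \neq j$ and $x_j - A x_{j-1} = z$; solving the former around the cycle gives $x_{j-1} = A^{l-1} x_j$, so $z = x_j - A^l x_j = (1-A_E^l)^t x_j \in (1-A_E^l)^t\ZZ^{\Lambda_j}$. Thus $\ker(q\circ\iota_j) = (1-A_E^l)^t\ZZ^{\Lambda_j}$ and $\Phi_j$ is an isomorphism.

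I expect the only real obstacle to be the bookkeeping of the cyclic block structure: first, justifying the single-step shift $A(\ZZ^{\Lambda_i}) \subseteq \ZZ^{\Lambda_{i+1}}$ — that \emph{all} edges leaving a class $\Lambda_i$ land in $\Lambda_{i+1}$, not merely the edge used to define the enumeration — and second, keeping the index arithmetic modulo $l$ consistent through the telescoping sum and the around-the-cycle solve. Once this shift is established, the three verifications above are each a one-line computation.
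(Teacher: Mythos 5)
Your proposal is correct and follows essentially the same route as the paper: the same inclusion-then-quotient map, the same telescoping factorization $(1-A_E^l)^t = (1-A_E^t)\sum_{i=0}^{l-1}(A_E^i)^t$ for well-definedness, surjectivity by shifting representatives into $\ZZ^{\Lambda_j}$ via Lemma~\ref{identitycokernel}, and injectivity by decomposing $b$ into $\Lambda_k$-components and solving the recursion $b_k = A_E^t b_{k-1}$ around the cycle. The cyclic-shift property $A_E^t(\ZZ^{\Lambda_i}) \subseteq \ZZ^{\Lambda_{i+1}}$ that you flag as the main obstacle is likewise taken as given in the paper (it is built into the enumeration of the classes $\Lambda_0,\dots,\Lambda_{l-1}$ via \cite[Lemma~4.2]{RobertsonRoutSims}), so your treatment is no less complete than the paper's.
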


\begin{proof}
Fix $0 \le j \le l-1$. The formula $(1-A_E^l)^t = (1-A_E^t) \big(\sum_{i=0}^{l-1} (A_E^i)^t \big)$ shows that $\im(1-A_E^l)^t \subseteq \im(1-A_E^t)$. Since $(1-A_E^l)^t \ZZ^{\Lambda_j} \subseteq \im(1-A_E^l)^t$, it follows that the map $\ZZ^{\Lambda_j} \to \ZZ^{E^0}$ given by $\delta_v \mapsto \delta_v$ for $v \in  \Lambda_j$, descends to a homomorphism $\Phi_j: \ZZ^{\Lambda_j} / (1-A_E^l)^t \ZZ^{\Lambda_j} \to \ZZ^{E^0} / \im(1-A_E)^t$ satisfying $\Phi_j (\delta_v + (1-A_E^l)^t \ZZ^{\Lambda^j}) = \delta_v+\im(1-A_E^t),$ for $v \in \Lambda_j$.

We must show that $\Phi_j$ is an isomorphism. To see that $\Phi_j$ is surjective, fix $0 \le k \le l-1$ and $v \in \Lambda_k$. Then $(A_E^{j-k})^t \delta_v \in \ZZ^{\Lambda_j}$ and \[ \delta_v + \im(1-A_E^t) = (A_E^{j-k})^t \delta_v + \im(1-A_E^t) = \Phi_j\big((A_E^{j-k})^t \delta_v + (1-A_E^l)^t \ZZ^{\Lambda_j} \big). \]

To see that $\Phi_j$ is injective, fix $a = \sum_{v \in \Lambda_j} a_v \delta_v \in \ZZ^{\Lambda_j}$ such that $\Phi_j(a + (1-A_E^l)^t \ZZ^{\Lambda_j}) = 0$. That is, $a \in \im(1-A_E^t)$. Say $a = (1-A_E^t)b$ where $b = \sum_{w \in E^0} b_w \delta_w$. Let $b_k := b|_{\Lambda_k} = \sum_{w \in \Lambda_k} b_w \delta_w$ for each $0 \le k \le l-1$. Since $a\in \ZZ^{\Lambda_j}$, we have $0 = a|_{\Lambda_k} = ((1-A_E^t)b)|_{\Lambda_k} = b_k- A_E^t b_{k-1}$, for all $0 \le k \le l-1$, $k \not = j$, where subtraction in the subscript is modulo $l$. Therefore $b_k = (A_E^t)^{k-j} b_j$ for each $0 \le k \le l-1$, $k \not = j$, where subtraction in the superscript is modulo $l$. Hence \begin{align*} a = (1-A_E^t)b &= (1-A_E^t)(b_0 + \dots + b_{l-1}) \\ &= (1-A_E^t) \Big(\sum_{k=0}^{l-1} (A_E^k)^t \Big) b_j = (1-A_E^l)^t b_j \in (1-A_E^l)^t \ZZ^{\Lambda_j}. \qedhere \end{align*} \end{proof}

\begin{cor}\label{C*(E(l))1}
Let $E$ be a strongly connected finite directed graph. Let $\omega = (n_k)_{k=1}^\infty$ be a multiplicative sequence, and let $l = \gcd(P_E, \omega)$. There is an isomorphism $\rho: \coker(1-A_E^l)^t  \to \bigoplus_{i=1}^l \coker(1-A_E^t)$
satisfying \[ \rho \big( \delta_v+\im(1-A_E^l)^t \big) = \big(0, \dots, 0, \delta_v+\im(1-A_E^t), 0, \dots, 0 \big), \] where $v \in \Lambda_j$ for some $0 \le j \le l-1$ and $\delta_v + \im(1-A_E^t)$ appears in the $j$-th position.
\end{cor}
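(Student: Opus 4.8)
The plan is to obtain $\rho$ as the composite of the two isomorphisms already constructed, namely $\Theta$ from Lemma~\ref{dirsuml} together with the direct sum of the maps $\Phi_j$ from Lemma~\ref{isovarup}. Concretely, I would set $\rho := \big(\bigoplus_{j=0}^{l-1} \Phi_j\big) \circ \Theta$, where $\Theta: \coker(1-A_E^l)^t \to \bigoplus_{i=0}^{l-1} \ZZ^{\Lambda_i}/(1-A_E^l)^t \ZZ^{\Lambda_i}$, and $\bigoplus_{j=0}^{l-1}\Phi_j$ acts coordinatewise, sending the $j$-th summand $\ZZ^{\Lambda_j}/(1-A_E^l)^t\ZZ^{\Lambda_j}$ isomorphically onto $\ZZ^{E^0}/\im(1-A_E^t) = \coker(1-A_E^t)$. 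Since a composite of isomorphisms is an isomorphism and a direct sum of isomorphisms is an isomorphism, $\rho$ is automatically an isomorphism, where the target is indexed so that the $j$-th summand (for $0 \le j \le l-1$) is the corresponding copy of $\coker(1-A_E^t)$.

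The only remaining point is to check the stated formula for $\rho$ on generators, which I would verify by tracking a single element through the composite. Fix $0 \le j \le l-1$ and $v \in \Lambda_j$. By the formula in Lemma~\ref{dirsuml}, $\Theta(\delta_v + \im(1-A_E^l)^t)$ is the tuple whose $j$-th entry is $\delta_v + (1-A_E^l)^t\ZZ^{\Lambda_j}$ and whose other entries vanish. Applying $\bigoplus_j \Phi_j$ and using the formula $\Phi_j(\delta_v + (1-A_E^l)^t \ZZ^{\Lambda_j}) = \delta_v + \im(1-A_E^t)$ from Lemma~\ref{isovarup}, we obtain the tuple whose $j$-th entry is $\delta_v + \im(1-A_E^t)$ and whose other entries vanish. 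This is precisely the asserted value of $\rho(\delta_v + \im(1-A_E^l)^t)$.

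There is essentially no real obstacle here: both ingredients have been established in the preceding two lemmas, and the corollary is just their composition. The only points worth a moment's care are notational, namely aligning the $0$-indexed direct sum $\bigoplus_{i=0}^{l-1}$ appearing in Lemmas~\ref{dirsuml} and \ref{isovarup} with the $1$-indexed direct sum $\bigoplus_{i=1}^{l}$ in the statement, and confirming that $\im(1-A_E^t)$ and $(1-A_E^t)\ZZ^{E^0}$ denote the same subgroup, so that the target of each $\Phi_j$ is indeed $\coker(1-A_E^t)$.
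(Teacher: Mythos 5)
Your proposal is correct and is essentially identical to the paper's own proof, which also defines $\rho := \big(\bigoplus_{i=0}^{l-1} \Phi_i\big) \circ \Theta$ and appeals to Lemmas~\ref{dirsuml} and~\ref{isovarup}. Your explicit tracking of a generator through the composite is slightly more detailed than the paper's one-line verification, but the argument is the same.
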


\begin{proof}
Define $\rho := \big(\bigoplus_{i=0}^{l-1} \Phi_i \big) \circ \Theta$. It follows from Lemma~\ref{isovarup} and Lemma~\ref{C*(E(l))1} that $\rho$ is an isomorphism that satisfies the desired formula.
\end{proof}

\section{Computing $K_1(C^*(E,\omega))$}\label{sec:K1}

In this section we compute $K_1(C^*(E,\omega))$ where $E$ is a strongly connected finite graph $E$ such that the only roots of unity that are eigenvalues of $A_E^t$ are the $\Pp_E$-th roots of unity, and $\omega$ is a multiplicative sequence. The main result of this section is the following.

\begin{thm}\label{K_1 limit}
Let $E$ be a strongly connected finite graph and suppose that the only roots of unity that are eigenvalues of $A_E^t$ are the $\Pp_E$-th roots of unity. Let $\omega = (n_k)_{k=1}^\infty$ be a multiplicative sequence and let $l := \gcd(\Pp_E,\omega)$. Then
\[ K_1\big(C^*(E,\omega) \big) =\bigoplus_{i=1}^l \ker(1-A_E^t).\]
\end{thm}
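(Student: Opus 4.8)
The plan is to combine continuity of $K$-theory under direct limits with the identification $K_1(C^*(F))\cong\ker(1-A_F^t)$ recorded above. Since $C^*(E,\omega)=\varinjlim(C^*(E(n_k)),\tilde j_{n_k,n_{k+1}})$ and $K_1$ commutes with direct limits, I would first write
\[ K_1(C^*(E,\omega))=\varinjlim\big(K_1(C^*(E(n_k))),(\tilde j_{n_k,n_{k+1}})_*\big). \]
By Theorem~7.1 of \cite{Raeburn:Graphalgebras05} together with Lemma~\ref{lem:keriso}, each term satisfies $K_1(C^*(E(n_k)))\cong\ker(1-A_{E(n_k)}^t)\cong\ker(1-A_E^{n_k})^t$, so the computation reduces to understanding the subgroups $\ker(1-A_E^{n_k})^t\subseteq\ZZ^{E^0}$ and the connecting maps between them.

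Next I would show these groups stabilise. Choose $K$ with $\gcd(\Pp_E,n_k)=l$ for all $k\ge K$; such $K$ exists because $\gcd(\Pp_E,n_k)$ is nondecreasing in $k$ (as $n_k\mid n_{k+1}$) and bounded above by $\Pp_E$, hence eventually equal to its limit $l$. Working over $\CC$, the factorisation $x^{n}-1=\prod_{\zeta^{n}=1}(x-\zeta)$ into distinct linear factors gives $\ker(1-(A_E^t)^{n})=\bigoplus_{\zeta^{n}=1}\ker(A_E^t-\zeta)$, and a summand is nonzero only when $\zeta$ is an eigenvalue of $A_E^t$; being an $n$-th root of unity, such a $\zeta$ is then a root of unity, hence by hypothesis a $\Pp_E$-th root of unity. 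For $n=n_k$ this forces $\zeta^{\gcd(n_k,\Pp_E)}=\zeta^l=1$, and the identical computation for $n=l$ (using $l\mid\Pp_E$) produces the same index set. Intersecting with $\ZZ^{E^0}$ I conclude $\ker(1-A_E^{n_k})^t=\ker(1-A_E^l)^t$ for every $k\ge K$, so the tail of the inductive system consists of a single fixed group.

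Identifying the connecting maps on this tail is the step I expect to be the main obstacle. Since $\tilde j_{n_k,n_{k+1}}$ preserves gauge degree it is equivariant for the gauge actions, and so induces a map $\phi_0$ on the coefficient groups of Raeburn's exact sequence $0\to\ker(1-A^t)\to\ZZ^{E^0}\xrightarrow{1-A^t}\ZZ^{E^0}\to\coker(1-A^t)\to0$; by naturality of the underlying Pimsner--Voiculescu sequence the induced map on $K_1$ is the restriction of $\phi_0$ to the kernel, where $\phi_0$ is the vertex map $\delta_\nu\mapsto\sum_{[\tau]_{n_k}=\nu}\delta_\tau$ coming from $\tilde j_{n_k,n_{k+1}}(p_{n_k,\nu})=\sum_{[\tau]_{n_k}=\nu}p_{n_{k+1},\tau}$. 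The real work is to transport this restricted vertex map through the isomorphism of Lemma~\ref{lem:keriso} and to check that, under the identifications $\ker(1-A_E^{n_k})^t=\ker(1-A_E^l)^t$ of the previous step, it becomes the identity for all $k\ge K$. I would do this by chasing an explicit kernel vector through the concrete form of Lemma~\ref{lem:keriso}: the delicate point is that, whereas the many-to-one vertex map produces multiplication by $n_{k+1}/n_k$ on cokernels (Proposition~\ref{mult}), on the kernels the same map sends a uniform kernel vector to the corresponding uniform kernel vector upstairs and so collapses to the identity; one must also verify that the $\ker(1-A_E^{n_k})^t$ identifications for consecutive $k$ are mutually compatible. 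For the present statement it would in fact suffice to see that these tail maps are isomorphisms, but the approach just described yields the stronger identity claim.

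Finally, once the tail is exhibited as the constant inductive system $(\ker(1-A_E^l)^t,\id)$, its direct limit is simply $\ker(1-A_E^l)^t$. I would then finish with the kernel analogue of Corollary~\ref{C*(E(l))1}. The cyclic block structure of $A_E^t$ relative to the relation $\sim_l$ --- the decomposition $E^0=\bigsqcup_{i=0}^{l-1}\Lambda_i$ with $A_E^t(\ZZ^{\Lambda_i})\subseteq\ZZ^{\Lambda_{i+1}}$, indices modulo $l$ --- shows that the coordinate restriction $v\mapsto v|_{\Lambda_j}$ is an isomorphism $\ker(1-A_E^t)\to\ker(1-(A_E^l)^t)\cap\ZZ^{\Lambda_j}$ for each $j$, with inverse obtained by propagating a fixed vector of $(A_E^l)^t$ around the cycle via $A_E^t$. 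Since $(A_E^l)^t$ preserves each $\ZZ^{\Lambda_j}$, this yields $\ker(1-A_E^l)^t=\bigoplus_{j=0}^{l-1}\big(\ker(1-(A_E^l)^t)\cap\ZZ^{\Lambda_j}\big)\cong\bigoplus_{i=1}^{l}\ker(1-A_E^t)$, completing the proof.
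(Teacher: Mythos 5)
Your proposal is correct, and for most of its length it runs parallel to the paper's own proof: continuity of $K_1$ under direct limits, the identification $K_1(C^*(E(n_k)))\cong\ker(1-A_E^{n_k})^t$ via Lemma~\ref{lem:keriso}, naturality of the Pimsner--Voiculescu sequence to see that the connecting maps become the inclusions $x\mapsto x$, and stabilisation of the kernels along the tail. The step you defer (``chasing an explicit kernel vector'') is exactly the paper's Proposition~\ref{K_1 diagram}, and it goes through as you predict: for $a\in\ker(1-A_{E(n)}^t)$ one has $\psi_{mn}(\phi_{n,mn}(a))=\phi_{n,mn}(a)|_{\ZZ^{E^0}}=\psi_n(a)$, since the only $\tau\in E^0$ with $[\tau]_n=\mu$ is $\tau=\mu$ itself; the one piece you leave schematic is that making ``naturality'' precise requires constructing the induced homomorphism between the skew-product algebras $C^*(E(n_k)\times_1\ZZ)$ from the $\TT$-equivariance of $\tilde j_{n_k,n_{k+1}}$, which the paper does in a dedicated lemma. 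Your stabilisation argument is a mild variant of the paper's: you diagonalise over $\CC$ using squarefreeness of $x^n-1$, whereas the paper factors $(1-A_E^{n_k})^t=R_{n_k/l}\big((A_E^l)^t\big)(1-A_E^l)^t$ and invokes invertibility of $R_{n_k/l}\big((A_E^l)^t\big)$ (Lemma~\ref{cmkdet}); both rest on the spectral mapping theorem and the root-of-unity hypothesis. Where you genuinely diverge is the final step. The paper obtains $\ker(1-A_E^l)^t\cong\bigoplus_{i=1}^l\ker(1-A_E^t)$ abstractly, by rank counting: $\ker(1-A_E^l)^t$ is free abelian of rank equal to $\rank\coker(1-A_E^l)^t$, which is $l\cdot\rank\coker(1-A_E^t)$ by Corollary~\ref{C*(E(l))1} (note the paper's closing display has a typo, writing $\bigoplus_{i=1}^l\ker(1-A_E^l)^t$). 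You instead prove the kernel analogue of Corollary~\ref{C*(E(l))1} directly: using $A_E^t(\ZZ^{\Lambda_i})\subseteq\ZZ^{\Lambda_{i+1}}$, restriction $x\mapsto x|_{\Lambda_j}$ is injective on $\ker(1-A_E^t)$ (the relations $x_{k+1}=A_E^t x_k$ force all blocks to vanish once one does) and surjective onto $\ker(1-A_E^l)^t\cap\ZZ^{\Lambda_j}$ (with inverse $y\mapsto\sum_{i=0}^{l-1}(A_E^i)^t y$), and the blocks sum directly because $(A_E^l)^t$ preserves each $\ZZ^{\Lambda_j}$. This is correct, and it buys something the paper's argument does not: an explicit, canonical isomorphism rather than an abstract isomorphism of free abelian groups of equal rank, making it the natural companion to the paper's cokernel decomposition.
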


To prove Theorem~\ref{K_1 limit} we need a series of results. We begin by studying $\ker(1-A_{E(n)}^t)$ for $n \geq 1$.

Let $\{ \delta_{v}: v \in E^0 \}$ be the generators of $\ZZ^{E^0}$ and let $\{ \delta_{\mu,n}: \mu \in E^{<n} \}$ be the generators of $\ZZ^{E^{<n}}$. For $0 \le k \le n-1$ and $a = \sum_{\mu \in E^{<n}} a_\mu \delta_{\mu,n} \in \ZZ^{E^{<n}}$,
we define $a_k := \sum_{\mu \in E^k} a_\mu \delta_{\mu,n} \in \ZZ^{E^{<n}}$ and $a|_{\ZZ^{E^k}}:=\sum_{\mu \in E^k} a_\mu \delta_{\mu,k} \in \ZZ^{E^k}$. For $b = \sum_{v \in E^0} b_v \delta_v \in \ZZ^{E^0}$, we define $\iota_n(b) := \sum_{v \in E^0} b_v \delta_{v,n} \in \ZZ^{E^{<n}}$.

\begin{lem}\label{lem:keriso}

Let $E$ be a row-finite directed graph with no sources and let $n \geq 1$. There is an isomorphism $\psi_n: \ker(1 - A_{E(n)}^t) \to \ker(1-A_E^n)^t$ satisfying $\psi_n(a) = a|_{E^0}$ for $a \in \ker(1-A_{E(n)}^t)$.

\end{lem}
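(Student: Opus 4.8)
The plan is to compute the vertex matrix $A_{E(n)}$ explicitly from the definitions of $r_n$ and $s_n$, rewrite the single equation $A_{E(n)}^t a = a$ as a system of equations indexed by the levels $E^0, E^1, \dots, E^{n-1}$ of $E(n)^0 = E^{<n}$, and then solve this system by a downward induction on the level, expressing every coefficient of $a$ in terms of its level-$0$ part $a|_{E^0}$.

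First I would record the action of $A_{E(n)}$ on generators. From the formulas for $r_n$ and $s_n$, a generator $\delta_{\beta,n}$ with $\beta \in E^k$ and $k \le n-2$ is sent to $\sum_{e \in E^1,\, s(e) = r(\beta)} \delta_{e\beta,n}$ (one level up), while a generator $\delta_{\beta,n}$ with $\beta \in E^{n-1}$ wraps around to level $0$ with weights $A_E(\cdot,r(\beta))$. Writing $b := a|_{E^0} \in \ZZ^{E^0}$, the equation $A_{E(n)}^t a = a$ is therefore equivalent to the system
\[ a_\beta = \sum_{e \in E^1,\, s(e) = r(\beta)} a_{e\beta} \quad (\beta \in E^k,\ 0 \le k \le n-2), \qquad a_\beta = (A_E^t b)(r(\beta)) \quad (\beta \in E^{n-1}). \]

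Next I would prove by downward induction on $k$ that $a_\beta = \big((A_E^t)^{n-k} b\big)(r(\beta))$ for every $\beta \in E^k$ and every $0 \le k \le n-1$. The base case $k = n-1$ is the wrap-around equation, and the inductive step substitutes the level-$(k+1)$ values into the level-$k$ equation and collapses the sum over edges into one application of $A_E^t$, using $r(e\beta) = r(e)$ and $\sum_{e:\,s(e)=r(\beta)} f(r(e)) = (A_E^t f)(r(\beta))$. At $k=0$ we have $r(v)=v$ and $a_v = b_v$, so this formula forces $b = (A_E^t)^n b = (A_E^n)^t b$; that is, $b \in \ker(1-A_E^n)^t$. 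This shows $\psi_n$ is a well-defined homomorphism (it is a restriction, hence linear) landing in $\ker(1-A_E^n)^t$, with the stated formula.

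Finally I would verify bijectivity. Injectivity is immediate, since the displayed formula shows $a$ is determined by $b$, so $b=0$ forces $a = 0$. For surjectivity, given $b \in \ker(1-A_E^n)^t$ I would set $a_\beta := \big((A_E^t)^{n-|\beta|} b\big)(r(\beta))$ and check directly that $a$ satisfies both families of equations: the level-$k$ equations for $0 \le k \le n-2$ and the wrap-around equation follow from the same collapsing computation, while the requirement $a|_{E^0} = b$ holds precisely because $b = (A_E^n)^t b$. Thus $a \in \ker(1-A_{E(n)}^t)$ with $\psi_n(a) = b$. The one step that needs care is the wrap-around from level $n-1$ back to level $0$: it is exactly this step that turns the one-step operator $A_E^t$ into its $n$-th power, and hence produces $\ker(1-A_E^n)^t$ rather than $\ker(1-A_E^t)$; the rest is bookkeeping.
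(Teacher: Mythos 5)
Your proof is correct and follows essentially the same route as the paper: both identify the restriction-to-$E^0$ map as the isomorphism and invert it by propagating level-$0$ data back up through the levels of $E^{<n}$ using the kernel equations, with the wrap-around at level $n-1$ producing the power $(A_E^n)^t$. Your coordinate-wise formula $a_\beta = \big((A_E^t)^{n-|\beta|}b\big)(r(\beta))$ is exactly the paper's inverse $\varphi_n(b) = \sum_{i=0}^{n-1}(A_{E(n)}^t)^i(\iota_n(b))$ written out entry by entry, so the difference is purely one of presentation.
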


\begin{proof}

Define $\psi_n: \ker(1-A_{E(n)}^t) \to \ZZ^{E^0}$ by $\psi_n(a) = a|_{\ZZ^{E^0}}$
for $a \in (1-A_{E(n)}^t)$. We check that $\psi_n(\ker(1-A_{E(n)}^t) \subseteq \ker(1-A_E^n)^t$. Let $a \in \ker(1-A_{E(n)}^t)$. Then \[ (1-A_E^n)^t (\psi_n(a))= (1-A_E^n)^t (a|_{\ZZ^{E^0}}) = ( (1-A_{E(n)}^n)^ta_0 )|_{\ZZ^{E^0}} = 0. \] So $\psi_n(a) \in \ker(1-A_E^n)^t$, and hence $\psi_n$ descends to a homomorphism $\ker(1-A_{E(n)}^t) \to \ker(1-A_E^n)^t$ which we also label $\psi_n$.

Define $\varphi_n: \ker(1-A_{E}^n)^t \to \ZZ^{E^{<n}}$ by $\varphi_n(b) =
\sum_{i=0}^{n-1} (A_{E(n)}^t)^i (\iota_n(b))$ for $b \in \ker(1-A_{E}^n)^t$. We check
$\varphi_n(\ker(1-A_E^n)^t) \subseteq \ker(1-A_{E(n)}^t)$. Let $b \in \ker(1-A_E^n)^t$.
Then \begin{align*} (1-A_{E(n)}^t) (\varphi_n(b)) &= (1-A_{E(n)}^t) \Big(\sum_{i=0}^{n-1} (A_{E(n)}^i)^t (\iota_n(b)) \Big) \\ &= (1-A_{E(n)}^n)^t (\iota_n(b)) \\ &= \iota_n \big( (1 - A_{E(n)}^n)^t (b) \big)= 0. \end{align*} So $\varphi_n(b) \in \ker(1-A_{E(n)}^t)$, and hence $\varphi_n$ descends to a homomorphism $\ker(1-A_E^n)^t \to \ker(1-A_{E(n)}^t)$ which we also label $\varphi_n$.

We check that $\varphi_n$ is an inverse for $\psi_n$. Let $a \in \ker (1-A_{E(n)}^t)$.
Fix $k <n$. We have \[ 0 = (1 - A_{E(n)}^t) (a_k) = \begin{cases} a_k - A_{E(n)}^t (a_{k+1})
& \text{if } k \not = n - 1 \\ a|_{E^{n-1}} - A_{E(n)}^t (a_0) & \text{if } k = n -1. \end{cases} \]

So $a_{n-1} = A_{E(n)}^t (a_0)$. Then $a_{n-2} = A_{E(n)}^t (a_{n-1}) =
(A_{E(n)}^2)^t (a_0)$. Repeating this step yields $a_{n-i} = (A_{E(n)}^i)^t (a_0)$
for $i < n$. Since $a_0 = \iota_n(a|_{\ZZ^{E^0}})$, we have $\varphi_n(\psi_n(a)) =
\varphi_n(a|_{\ZZ^{E^0}}) = \sum_{i=0}^{n-1} (A_{E(n)}^i)^t a_0 = a$.

Now, we check that $\psi_n$ is an inverse for $\varphi_n$. Let $v \in E^0$ and $0 \le i < n$. Repeated applications of \eqref{eq:matmult} shows that $\big(A_{E(n)}^i \big)^t \delta_{v,n} \in \lsp\{ \delta_{\mu,n}: \mu \in E^{n-i} \}$. Thus \begin{align}\label{eq:mat cases}
\big( \big(A_{E(n)}^i \big)^t \delta_{v,n} \big) \big|_{\ZZ^{E^0}} = \begin{cases} \delta_{v} & \text{ if } i = 0 \\ 0 & \text{ otherwise}. \end{cases} \end{align}

Now, let $b \in \ker(1-A_E^n)^t$.  By \eqref{eq:mat cases}, we have \[ \psi_n (\varphi_n(b)) =
\Big(\sum_{i=0}^{n-1} (A_{E(n)}^i)^t (\iota_n(b)) \Big) \Big|_{\ZZ^{E^0}} = b. \qedhere \] \end{proof}

Suppose $E$ is a row-finite directed graph with no sources. Define the skew-product graph $E \times_1 \ZZ$ as the graph with edge set $(E \times_1 \ZZ)^1 = E^1 \times \ZZ$ and vertex set $(E \times_1 \ZZ)^0 = E^0 \times \ZZ$ and range and source maps defined by \[ r(e, k) = (r(e), k-1) \text{ and } s(e,k) = (s(e),k). \] For each $n \geq 1$, we denote by $s_{n,((e,\mu),k)}$ and $p_{n,(\mu,k)}$ the generators of $C^*(E(n) \times_1 \ZZ)$. Proposition~6.7 of \cite{Raeburn:Graphalgebras05} gives a natural action $\beta_{E(n)}$ of $\ZZ$ on $C^*(E(n) \times_1 \ZZ)$ such that $(\beta_{E(n)})_m(s_{n,((e,\mu),k)}) = s_{n,((e,\mu),k+l)}$. By \cite[Lemma~7.10]{Raeburn:Graphalgebras05} there is an isomorphism $\phi_{E(n)}$ of $C^*(E \times_1 \ZZ)$ onto the crossed product $C^*(E(n)) \rtimes \TT$ such that $\phi_{E(n)} \circ (\beta_{E(n)})_m = \hat \gamma^n_m \circ \phi_{E(n)}$, where $\hat \gamma^n$ is the dual of the gauge action $\gamma^n$ of $C^*(E(n))$.

\begin{lem}
Let $E$ be a row-finite directed graph with no sources, and let $n,m \in \NN$.
There is a homomorphism $i_{n,mn}: C^*(E(n) \times_1 \ZZ) \to C^*(E(mn) \times_1 \ZZ)$
such that \begin{align*}&  i_{n,mn}(s_{n,((e,\mu),1)}) = \sum_{\tau \in s(e) E^{<mn}, [\tau]_n = \mu} s_{mn,((e,\tau),1)} \quad \text{ and } \\ & i_{n,mn}(p_{n,(\mu,1)}) = \sum_{\tau \in E^{<mn}, [\tau]_n = \mu} p_{mn,(\tau,1)},\end{align*} for all $n \geq 1$.
\end{lem}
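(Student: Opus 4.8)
The plan is to construct $i_{n,mn}$ via the universal property of $C^*(E(n)\times_1\ZZ)$, by exhibiting a Cuntz--Krieger $(E(n)\times_1\ZZ)$-family inside $C^*(E(mn)\times_1\ZZ)$ whose generators obey the stated formulas. For every vertex $(\mu,k)$ and every edge $((e,\mu),k)$ of $E(n)\times_1\ZZ$ I would set
\[
P_{(\mu,k)} := \sum_{\tau \in E^{<mn},\, [\tau]_n = \mu} p_{mn,(\tau,k)}, \qquad S_{((e,\mu),k)} := \sum_{\sigma \in s(e)E^{<mn},\, [\sigma]_n = \mu} s_{mn,((e,\sigma),k)},
\]
for all $k \in \ZZ$, so that the case $k=1$ returns the displayed formulas. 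This mirrors the construction of $\tilde j_{n,mn}$ in \cite{RobertsonRoutSims}; the only new feature is the extra $\ZZ$-coordinate, which the range map of the skew product merely shifts by $-1$.

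The relations among the projections and the source relation are routine bookkeeping. The $P_{(\mu,k)}$ are mutually orthogonal projections because for distinct vertices the index sets $\{(\tau,k) : [\tau]_n = \mu\}$ are disjoint. For the source relation I would use the standard graph-algebra identities $s_a^* s_b = \delta_{a,b}\, p_{s(a)}$ and $s_a s_b^{*} = 0$ whenever $s(a) \ne s(b)$: expanding $S_{((e,\mu),k)}^{*} S_{((e,\mu),k)}$, only the diagonal terms survive, giving $\sum_{\sigma} p_{mn,(\sigma,k)} = P_{(\mu,k)} = P_{s((e,\mu),k)}$, since $s((e,\mu),k) = (\mu,k)$ and $s_{mn}(e,\sigma) = \sigma$. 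An analogous orthogonality argument shows each $S_{((e,\mu),k)}$ is a partial isometry.

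The Cuntz--Krieger relation at a vertex $(\mu,k)$ is the crux. The edges with range $(\mu,k)$ are exactly the $((e,\nu),k+1)$ with $r_n(e,\nu) = \mu$, so I must prove $\sum_{(e,\nu):\,r_n(e,\nu)=\mu} S_{((e,\nu),k+1)} S_{((e,\nu),k+1)}^{*} = P_{(\mu,k)}$. Expanding the left-hand side produces terms $s_{mn,((e,\sigma),k+1)}\, s_{mn,((e,\sigma'),k+1)}^{*}$; here the sources are $(\sigma,k+1)$ and $(\sigma',k+1)$, so the identity $s_a s_b^{*}=0$ for distinct sources kills every cross term with $\sigma \ne \sigma'$, leaving only the range projections indexed by pairs $(e,\sigma)$ with $r_n(e,[\sigma]_n) = \mu$. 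On the other side, applying the Cuntz--Krieger relation in $C^*(E(mn)\times_1\ZZ)$ rewrites $P_{(\mu,k)}$ as the sum of the same range projections, now indexed by pairs $(e,\sigma)$ with $[r_{mn}(e,\sigma)]_n = \mu$. Equality of the two index sets reduces to the residue--range compatibility $[r_{mn}(e,\sigma)]_n = r_n(e,[\sigma]_n)$, which I would verify by a short case analysis according to $|\sigma| \bmod n$ and the two cases in the definition of $r_n$. This compatibility says precisely that $(e,\sigma)\mapsto(e,[\sigma]_n)$ is a graph morphism $E(mn)\to E(n)$, and it is the main obstacle: the delicate point is making the residue bookkeeping interact correctly with the split definition of the range map while confirming that the off-diagonal terms genuinely drop out in the skew product. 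Once the family is verified, universality of $C^*(E(n)\times_1\ZZ)$ delivers the homomorphism $i_{n,mn}$.
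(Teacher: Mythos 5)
Your proof is correct, but it takes a genuinely different route from the paper's. The paper never verifies Cuntz--Krieger relations in the skew product: it shows that the already-constructed embedding $\tilde j_{n,mn}\colon C^*(E(n))\to C^*(E(mn))$ is equivariant for the gauge actions $\gamma^n$ and $\gamma^{mn}$, obtains $\tilde j_{n,mn}\times 1\colon C^*(E(n))\rtimes\TT\to C^*(E(mn))\rtimes\TT$ from functoriality of crossed products \cite[Corollary~2.48]{Williams:Crossedproducts}, and defines $i_{n,mn}:=\phi_{E(mn)}^{-1}\circ(\tilde j_{n,mn}\times 1)\circ\phi_{E(n)}$ using Raeburn's isomorphisms $C^*(E\times_1\ZZ)\cong C^*(E)\rtimes\TT$; the displayed formulas then follow by evaluating on generators. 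You instead exhibit a Cuntz--Krieger $(E(n)\times_1\ZZ)$-family inside $C^*(E(mn)\times_1\ZZ)$ and invoke universality, the crux being the residue--range compatibility $[r_{mn}(e,\sigma)]_n=r_n(e,[\sigma]_n)$, which does hold (check the cases $|\sigma|<mn-1$ with $|\sigma|\not\equiv n-1\pmod{n}$, $|\sigma|<mn-1$ with $|\sigma|\equiv n-1\pmod{n}$, and $|\sigma|=mn-1$). One point to make explicit in your source-relation computation: $P_{(\mu,k)}$ sums over all $\tau\in E^{<mn}$ with $[\tau]_n=\mu$, whereas $S^*S$ produces a sum over $\sigma\in s(e)E^{<mn}$ with $[\sigma]_n=\mu$; these index sets coincide because $r(\tau)=r([\tau]_n)$, so every $\tau$ with residue $\mu\in s(e)E^{<n}$ automatically lies in $s(e)E^{<mn}$. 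What your route buys is self-containedness: no crossed-product machinery and no reliance on the prior construction of $\tilde j_{n,mn}$, whose verification in \cite{RobertsonRoutSims} encodes the same residue combinatorics you redo by hand. What the paper's route buys is the identity $\phi_{E(mn)}\circ i_{n,mn}=(\tilde j_{n,mn}\times 1)\circ\phi_{E(n)}$ for free, which is exactly what is exploited later via naturality of the Pimsner--Voiculescu sequence in Proposition~\ref{K_1 diagram}; if you use your construction there, you should add the one-line remark that your homomorphism coincides with the paper's, since both satisfy the same formulas on the generators $s_{n,((e,\mu),k)}$ and $p_{n,(\mu,k)}$ for every $k\in\ZZ$, not just $k=1$, and these generate $C^*(E(n)\times_1\ZZ)$.
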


\begin{proof} Let $(i_{C^*(E(n))},i_\TT)$ be the universal covariant representation of $(C^*(E(n)), \TT,\gamma^n)$. Recall the injective homomorphism $\tilde j_{n,mn}: C^*(E(n)) \to C^*(E(mn))$. We show that $\tilde j_{n,mn}$ is $\TT$-equivariant. For $e \in E^1$ and $\mu \in s(e) E^{<n}$ and $z \in \TT$, we have
\begin{align*} \tilde j_{n,mn}(\gamma^n_z(s_{n,(e,\mu)}) &= \tilde j_{n,mn}(z s_{n,(e,\mu)}) \\ &= \sum_{\tau \in E^{<mn}, [\tau]_n=\mu} z s_{mn,(e,\tau)} = \gamma^{mn}_z (\tilde j_{n,mn}(s_{n,(e,\mu)})),\end{align*} and similarly for $\mu \in E^{<n}$, $\tilde j_{n,mn}(\gamma^n_z(p_{n,\mu})) = (\gamma^{mn}_z(\tilde j_{n,mn}(p_{n,\mu}))$.

By \cite[Corollary~2.48]{Williams:Crossedproducts} there is a homomorphism $\tilde j_{n,mn} \times 1: C^*(E(n)) \rtimes \TT \to C^*(E(mn)) \rtimes \TT$ satisfying \[(\tilde j_{n,mn} \times 1)(i_{C^*(E(n))}(a)i_\TT(z))=i_{C^*(E(mn))}(\tilde j_{n,mn}(a))i_\TT(z)\]
for all $a \in C^*(E(n))$ and $z \in \TT$.

Define $i_{n,mn} := \phi_{E(mn)}^{-1} \circ (\tilde j_{n,mn} \times 1) \circ \phi_{E(n)}$. Let $((e,\mu),1) \in E(n)^1 \times_1 \ZZ$ and let $f_1(z) = z$ for $z \in \TT$. We calculate \begin{align*} \big( \phi_{E(mn)}^{-1} \circ (\tilde j_{n,mn} \times 1) \circ \phi_{E(n)} \big) (s_{n,((e,\mu),1)}) &= \phi_{E(n)}^{-1}\big((\tilde j_{n,mn} \times 1) (i_A(s_{(n,(e,\mu))}) i_\TT(f_1))\big) \\ &= \phi_{E(n)}^{-1} \Big(i_A \Big( \sum_{\tau \in s(e) E^{<mn}, [\tau]_n = \mu} s_{mn,(e,\tau)} \Big) i_\TT (f_1) \Big) \\ &= \sum_{\tau \in s(e) E^{<mn}, [\tau]_n = \mu} s_{mn,((e,\tau),1)}. \end{align*}

Similarly, for $(\mu,1) \in E(n)^0 \times_1 \ZZ$, we have \begin{align*} \big( \phi_{E(mn)}^{-1} \circ (\tilde j_{n,mn} \times \id) \circ \phi_{E(n)} \big) (p_{n,(\mu,1)}) &= \phi_{E(n)}^{-1}\big((\tilde j_{n,mn} \times \id) (i_A(p_{n,\mu}) i_\TT(f_1))\big) \\ &= \phi_{E(n)}^{-1} \Big(i_A \Big( \sum_{\tau \in E^{<mn}, [\tau]_n = \mu} p_{mn,\tau} \Big) i_\TT (f_1) \Big) \\ &= \sum_{\tau \in E^{<mn}, [\tau]_n = \mu} p_{mn,(\tau,1)}. \qedhere  \end{align*} \end{proof}

\begin{prp}\label{K_1 diagram}
Let $E$ be a row-finite directed graph with no sources and let $n,m \in \NN$. There are isomorphisms $K_1(C^*(E(n))) \to \ker(1-A_E^n)^t$ and $K_1(C^*(E(mn))) \to \ker(1-A_E^{mn})^t$ such that the following diagram commutes.

\begin{center}
\begin{tikzcd}
K_1(C^*(E(n))) \arrow{d}{} \arrow{r}{K_1(\tilde j_{n,mn})}
& K_1(C^*(E(mn))) \arrow{d}{} \\
\ker(1-A_E^n)^t \arrow{r}{x \mapsto x}
& \ker(1-A_E^{mn})^t
\end{tikzcd}
\end{center}

\end{prp}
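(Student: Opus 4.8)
The plan is to take the vertical isomorphisms to be the composites of Raeburn's isomorphism $K_1(C^*(E(n)))\cong\ker(1-A_{E(n)}^t)$ (from \cite[Theorem~7.1]{Raeburn:Graphalgebras05}) with the isomorphism $\psi_n\colon\ker(1-A_{E(n)}^t)\to\ker(1-A_E^n)^t$ of Lemma~\ref{lem:keriso}, and likewise for $mn$. With these choices the content of the proposition is that $K_1(\tilde j_{n,mn})$ is carried to the inclusion $\ker(1-A_E^n)^t\hookrightarrow\ker(1-A_E^{mn})^t$; this inclusion makes sense because $(1-A_E^{mn})^t=\big(\sum_{i=0}^{m-1}(A_E^{ni})^t\big)(1-A_E^n)^t$, so $\ker(1-A_E^n)^t\subseteq\ker(1-A_E^{mn})^t$.

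First I would make $K_1(\tilde j_{n,mn})$ computable by passing to the skew-product picture. Recall that $C^*(E(n))$ is (stably) realised as $C^*(E(n)\times_1\ZZ)\rtimes_{\beta_{E(n)}}\ZZ$, with $C^*(E(n)\times_1\ZZ)$ an AF algebra; the associated Pimsner--Voiculescu sequence, together with the vanishing of $K_1$ of an AF algebra, identifies $K_1(C^*(E(n)))$ with $\ker\big(\id-(\beta_{E(n)})_*^{-1}\big)\subseteq K_0(C^*(E(n)\times_1\ZZ))$, and this is the route by which \cite[Theorem~7.1]{Raeburn:Graphalgebras05} produces the isomorphism with $\ker(1-A_{E(n)}^t)$, under the generator labelling $[p_{n,(\mu,\cdot)}]\leftrightarrow\delta_{\mu,n}$. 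The homomorphism $i_{n,mn}$ of the preceding lemma satisfies $i_{n,mn}=\phi_{E(mn)}^{-1}\circ(\tilde j_{n,mn}\times 1)\circ\phi_{E(n)}$; since $\tilde j_{n,mn}$ is $\TT$-equivariant, $\tilde j_{n,mn}\times 1$ commutes with the dual $\ZZ$-actions, and hence $i_{n,mn}$ intertwines $\beta_{E(n)}$ and $\beta_{E(mn)}$. Therefore $\tilde j_{n,mn}$, $\tilde j_{n,mn}\times 1$ and $i_{n,mn}$ assemble into a morphism of the two crossed-product extensions, and naturality of the Pimsner--Voiculescu sequence yields a commuting ladder in which $K_1(\tilde j_{n,mn})$ is the restriction of $K_0(i_{n,mn})$ to the kernels $\ker(\id-\beta_*^{-1})$.

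It then remains to compute $K_0(i_{n,mn})$ on these AF $K_0$-groups. The projection formula $i_{n,mn}(p_{n,(\mu,1)})=\sum_{\tau\in E^{<mn},\,[\tau]_n=\mu}p_{mn,(\tau,1)}$ passes to Murray--von Neumann classes, so under the generator labelling the induced map $J_{n,mn}\colon\ker(1-A_{E(n)}^t)\to\ker(1-A_{E(mn)}^t)$ is $a\mapsto\sum_{\mu\in E^{<n}}a_\mu\sum_{[\tau]_n=\mu}\delta_{\tau,mn}$ for $a=\sum_\mu a_\mu\delta_{\mu,n}$. Finally I would compose with the maps $\psi_n$ and $\psi_{mn}$. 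Since $\psi_n(a)=a|_{\ZZ^{E^0}}$ retains only the length-zero coordinates and $[v]_n=v$ for every $v\in E^0$, the coefficient of $\delta_{v,mn}$ in $J_{n,mn}(a)$ equals $a_{[v]_n}=a_v$; hence $\psi_{mn}(J_{n,mn}(a))=\sum_{v\in E^0}a_v\delta_v=\psi_n(a)$, which is exactly the inclusion applied to $\psi_n(a)$. Chaining the two commuting squares gives $\psi_{mn}\circ R_{mn}\circ K_1(\tilde j_{n,mn})=\psi_{mn}\circ J_{n,mn}\circ R_n=(\text{incl})\circ\psi_n\circ R_n$, which is the desired commutativity.

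The main obstacle is the middle step: setting up the commuting ladder of Pimsner--Voiculescu sequences and verifying that Raeburn's isomorphism identifies $K_1(C^*(E(\cdot)))$ with $\ker(\id-\beta_*^{-1})$ compatibly with the generator labelling $[p_{(\mu,\cdot)}]\leftrightarrow\delta_\mu$, so that $K_1(\tilde j_{n,mn})$ really is computed by the elementary projection formula. Once that bookkeeping is in place, the concluding computation with $\psi_n$ and $\psi_{mn}$ is immediate.
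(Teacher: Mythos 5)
Your proposal is correct and follows essentially the same route as the paper: identify $K_1(C^*(E(\cdot)))$ with $\ker(1-(\beta_{E(\cdot)})_*^{-1})$ via the Pimsner--Voiculescu sequence in the skew-product picture, use naturality of that sequence together with the homomorphism $i_{n,mn}$ and the projection formula to compute $K_1(\tilde j_{n,mn})$ as the map $\phi_{n,mn}\colon a \mapsto \sum_{\mu} a_\mu \sum_{[\tau]_n=\mu}\delta_{\tau,mn}$ on $\ker(1-A_{E(n)}^t)$, and then check $\psi_{mn}\circ\phi_{n,mn}=\psi_n$. The only cosmetic difference is that you explicitly justify the inclusion $\ker(1-A_E^n)^t\subseteq\ker(1-A_E^{mn})^t$, which the paper leaves implicit.
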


\begin{proof}

The naturality of the Pimsner--Voiculescu diagram gives the following commutative
diagram (see \cite[Lemma~7.12]{Raeburn:Graphalgebras05}).

\begin{center}
\begin{tikzcd}
K_1(C^*(E(n))) \arrow{d}{} \arrow{r}{K_1(\tilde j_{n,mn})}
& K_1(C^*(E(mn))) \arrow{d} \\
\ker(1-(\beta_{E(n)})_*^{-1}) \arrow{r}{}
& \ker(1-(\beta_{E(mn)})^{-1}_*)
\end{tikzcd}
\end{center}

By \cite[Lemma~7.13]{Raeburn:Graphalgebras05} there is an injection $\sigma_n:
\ZZ^{E^{<n}} \to K_0(C^*(E(n) \times_1 \ZZ))$ satisfying $\sigma_n(\delta_{\mu,n})
= [p_{n,(\mu,1)}]_0$. Define $\phi_{n,mn}: \ZZ^{E^{<n}} \to \ZZ^{E^{<mn}}$ by
$\phi_{n,mn}(\delta_{\mu,n}) = \sum_{\tau \in E^{<mn}, [\tau]_n =\mu} \delta_{\tau,mn}$
for $\mu \in E^{<n}$. We claim that the following diagram commutes.

\begin{equation}\label{diagram:torestrict}
\begin{tikzcd}
K_0(C^*(E(n) \times_1 \ZZ)) \arrow{r}{K_0(i_{n,mn})}
& K_0(C^*(E(mn) \times_1 \ZZ)) \\
\ZZ^{E^{<n}} \arrow{u}{\sigma_n}  \arrow{r}{\phi_{n,mn}}
& \ZZ^{E^{<mn}} \arrow{u}{\sigma_{mn}}
\end{tikzcd}
\end{equation}

To prove this claim, fix $\mu \in E^{<n}$. Then \begin{align*} (\sigma_{mn}^{-1} \circ K_0(i_{n,mn}) \circ \sigma_n)(\delta_{\mu,n}) &= (\sigma_{mn}^{-1} \circ K_0(i_{n,mn}))([p_{(n,\mu,1)}]_0) \\ &= \sigma_{mn}^{-1}([i_{n,mn}(p_{(n,\mu,1)}]_0) \\
&= \sigma_{mn}^{-1} \Big( \sum_{\tau \in E^{<mn}, [\tau]_n = \mu} [p_{(mn,\tau,1)} ]_0 \Big) \\ &= \sum_{\tau \in E^{<mn}, [\tau]_n = \mu} \delta_{\tau,mn} \\ &= \phi_{n,mn}(\delta_{\mu,n}). \end{align*}

It follows from \cite[Theorem~7.16]{Raeburn:Graphalgebras05} that $\sigma_n$ restricts to an isomorphism of $\ker(1-A_{E(n)}^t)$ onto $\ker(1-(\beta_{E(n)})_*^{-1})$. Restricting diagram~\ref{diagram:torestrict} to the subgroups $\ker(1-(\beta_{E(n)})^{-1}_*) \subseteq K_0(C^*(E(n)) \times_1 \ZZ))$ and $\ker(1-A_{E(n)}^t) \subseteq \ZZ^{E^{<n}}$ yields the following commuting diagram.

\begin{center}
\begin{tikzcd}
\ker(1-(\beta_{E(n)})_*^{-1}) \arrow{d}{} \arrow{r}{K_0(i_{n,mn})}
& \ker(1-(\beta_{E(mn)})^{-1}_*) \arrow{d}{} \\
\ker(1-A_{E(n)}^t) \arrow{r}{\phi_{n,mn}}
&\ker(1-A_{E(mn)}^t)
\end{tikzcd}
\end{center}

Now, we claim that the following diagram commutes.

\begin{center}
\begin{tikzcd}
\ker(1-A_{E(n)}^t) \arrow{d}{\psi_n} \arrow{r}{\phi_{n,mn}}
&\ker(1-A_{E(mn)}^t) \arrow{d}{\psi_{mn}} \\
\ker(1-A_E^n)^t \arrow{r}{x \mapsto x}
& \ker(1-A_E^{mn})^t
\end{tikzcd}
\end{center}

To prove this claim, fix $x \in \ker(1-A_{E(n)}^t)$. Then \[ \psi_{mn}(\phi_{n,mn}(x))
= \psi_{mn} \Big(\sum_{\mu \in E^{<n}} x_\mu \sum_{\tau \in E^{<mn},
[\tau]_n = \mu} \delta_{\tau,mn} \Big) = \sum_{v \in E^0} x_v \delta_{v} = \psi_n(x). \]

Combining the preceding commutative diagrams gives the desired commutative diagram.
\end{proof}

\begin{proof}[Proof of Theorem~\ref{K_1 limit}]
By \cite[Theorem~6.3.2]{Rordam2000}, we have \[K_1\big(C^*(E,\omega) \big) \cong \varinjlim \big(K_1 (C^*(E(n_k)),K_1(j_{n_k,n_{k+1}}) \big). \] By Proposition~\ref{K_1 diagram}, we have \[ \big( \varinjlim K_1 (C^*(E(n_k)),K_1(j_{n_k,n_{k+1}}) \big) \cong \varinjlim \big(\ker(1-A_E^{n_k})^t, x \mapsto x\big). \]

By Lemma~\ref{cmkdet} the matrix $\sum_{j=0}^{n_k/l-1} (A_E^{j l})^t$ is invertible for $k$ such that $\gcd(\Pp_E,n_k) = \gcd(\Pp_E,\omega)$. So \[ \ker(1-A_E^{n_k})^t = \ker\Big( \Big( \sum_{j=0}^{n_k/l-1} (A_E^{j l})^t \Big) (1-A_E^l)^t \Big) = \ker(1-A_E^l)^t,\] for $k$ such that $\gcd(\Pp_E,n_k) = \gcd(\Pp_E,\omega)$. Hence \[ \big(\ker(1-A_E^{n_k})^t, x \mapsto x\big)
\cong \ker(1-A_E^l)^t.\] Combining the previous three isomorphisms gives an isomorphism
\[ K_1(C^*(E,\omega)) \cong \ker (1-A_E^l)^t.\] Now, $\ker(1-A_E^l)^t \cong \ZZ^r$, where $r = \rank \coker(1-A_E^l)^t = l \cdot \rank \coker(1-A_E^t)$ by Corollary~\ref{C*(E(l))1}. So $\ker (1-A_E^l)^t \cong \bigoplus_{i=1}^l \ker(1-A_E^l)^t$, giving the result.
\end{proof}

\section{Computing the torsion-free component of $K_0(C^*(E,\omega))$}\label{sec:K0}

In this section we calculate the torsion-free component of $K_0(C^*(E,\omega))$. We will use this group in Section~\ref{sec:class} to recover the supernatural number $[\omega]$ associated to $\omega$. In order to state the main theorem of this section, we need the following lemma.

\begin{lem}\label{adjoingroup}
Let $A$ be a free abelian group and let $\omega = (n_k)_{k=1}^\infty$ be a multiplicative sequence. Define an equivalence relation $\sim$ on $A \times \NN$, by $(a,j) \sim (a',j')$ if \[\frac{\max\{n_j, n_{j'} \}}{n_j} a = \frac{\max \{n_j, n_{j'} \}}{n_{j'}}a',\] and define \[ A \Big[ \frac{1}{\omega} \Big] := \left\{(a,j): a \in A, j \in \NN \right\} / \sim. \] Then $A \big[ \frac{1}{\omega} \big]$ is a torsion-free abelian group under the operation \[ [(a,j)] + [(a',j')] = \begin{cases} [((n_{j'}/n_j) \cdot a + a',j')] \quad & \text{ if } j' \ge j \\ [(a + (n_j/n_{j'}) \cdot a', j)] \quad & \text{ if } j \ge j'. \end{cases} \] Moreover, $\rank A \left[ \frac{1}{\omega} \right] = \rank A.$
\end{lem}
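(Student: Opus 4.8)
The plan is to recognise $A\big[\frac1\omega\big]$ as the algebraic direct limit of the sequence
\[ A \xrightarrow{\ \times(n_2/n_1)\ } A \xrightarrow{\ \times(n_3/n_2)\ } A \xrightarrow{\ \times(n_4/n_3)\ } \cdots, \]
in which every group is $A$ and the connecting map from stage $k$ to stage $k+1$ is multiplication by the integer $n_{k+1}/n_k$ (an integer since $\omega$ is multiplicative). Once this identification is made, the well-definedness of the operation and the associativity and commutativity come for free from the general theory of direct limits of abelian groups, so no group axiom need be checked by hand; only torsion-freeness and the rank statement then require short separate arguments.

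First I would reformulate $\sim$. For $j' \ge j$ we have $n_j \mid n_{j'}$, so $\max\{n_j,n_{j'}\} = n_{j'}$ and the defining condition $(a,j)\sim(a',j')$ reads simply $a' = (n_{j'}/n_j)\,a$ (and symmetrically when $j\ge j'$). Writing $\phi_{j,m}$ for multiplication by $n_m/n_j$, which is the telescoping composite of the connecting maps from stage $j$ to stage $m$, the condition is exactly $\phi_{j,m}(a)=\phi_{j',m}(a')$ for $m=\max\{j,j'\}$. Because $A$ is free, hence torsion-free, each $\phi_{j,m}$ is injective, so this holds for $m=\max\{j,j'\}$ if and only if it holds for some (equivalently every) $m\ge\max\{j,j'\}$. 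That is precisely the equivalence relation defining the direct limit of the system above; in particular $\sim$ is an equivalence relation, and $A\big[\frac1\omega\big]$ is by definition that direct limit. Pushing $[(a,j)]$ and $[(a',j')]$ to the common stage $\max\{j,j'\}$ and adding there reproduces exactly the stated addition formula, so the operation is the direct-limit addition.

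For torsion-freeness, suppose $m\,[(a,j)]=0$ for some integer $m\ge 1$; that is, $[(ma,j)]=[(0,j)]$. By the reformulated relation with equal indices this means $ma=0$ in $A$, and since $A$ is torsion-free and $m\neq 0$ we get $a=0$, so $[(a,j)]=0$.

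Finally, for the rank I would tensor with $\QQ$. As $-\otimes_\ZZ\QQ$ is a left adjoint it commutes with direct limits, giving
\[ A\Big[\tfrac1\omega\Big]\otimes_\ZZ\QQ \;\cong\; \varinjlim\big(A\otimes_\ZZ\QQ,\ \times(n_{k+1}/n_k)\big). \]
Each connecting map is now multiplication by a nonzero rational on the $\QQ$-vector space $A\otimes_\ZZ\QQ$, hence an isomorphism, so the limit is isomorphic to $A\otimes_\ZZ\QQ$ itself; thus $\rank A\big[\frac1\omega\big]=\dim_\QQ\big(A\big[\frac1\omega\big]\otimes_\ZZ\QQ\big)=\dim_\QQ\big(A\otimes_\ZZ\QQ\big)=\rank A$. (If one prefers to avoid the colimit--tensor interchange, the same conclusion follows by verifying directly that $[(a,j)]\mapsto a\otimes\frac1{n_j}$ is a well-defined injection of $A\big[\frac1\omega\big]$ into $A\otimes_\ZZ\QQ$ whose image contains the full-rank copy $a\mapsto a\otimes\frac1{n_1}$ of $A$.) The only genuine idea is this passage to $\otimes\,\QQ$ in the rank computation; I expect the remaining work to be the routine bookkeeping in the reformulation step, namely confirming that the composite of connecting maps is multiplication by $n_m/n_j$ and that injectivity lets me replace ``equal at some later stage'' by ``equal at $\max\{j,j'\}$.''
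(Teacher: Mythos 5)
Your proof is correct, but it takes a genuinely different route from the paper's. The paper verifies the group axioms by hand (closure, associativity, identity $[(0,i)]$, inverse $[(-a,i)]$), proves torsion-freeness exactly as you do, and establishes the rank statement by transporting a maximal linearly independent subset $\{a_\alpha\}$ of $A$ to the elements $[(a_\alpha,i)]$ and checking independence and maximality directly. You instead identify $A\big[\tfrac1\omega\big]$ up front with $\varinjlim\big(A,\times(n_{k+1}/n_k)\big)$ — using, correctly, that freeness of $A$ makes the connecting maps injective, so the ``equal at some later stage'' relation of the direct-limit construction coincides with the lemma's ``equal at stage $\max\{j,j'\}$'' relation — and then get the abelian group structure for free; interestingly, this identification is precisely the content of the paper's later Lemma~\ref{limad}, which the paper proves separately (using the already-established group structure), so your argument in effect merges the two lemmas without circularity. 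For the rank you tensor with $\QQ$ and use that the connecting maps become isomorphisms, rather than exhibiting an explicit maximal independent set. What each approach buys: the paper's is elementary and self-contained, and keeps the concrete ``formal fractions'' picture exploited in the subsequent remark and in Theorem~\ref{omegarecovery}; yours is more conceptual, and the $\QQ$-tensoring step is arguably cleaner and more airtight than the paper's maximality check, which as written only tests independence against elements $[(b,i)]$ carrying the same index $i$ rather than arbitrary $[(b,j)]$ — a small gap your argument avoids entirely.
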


\begin{proof}

Closure, associativity, and commutativity follow easily since $A$ is abelian. Let $0$ be the identity element of $A$. Then $[(0,i)] + [(a,i)] = [(0 + a,i)] = [(a,i)]$ so $[(0,i)]$ is an identity for $A \big[ \frac{1}{\omega} \big]$. Fix $a \in A$ and let $-a$ be the inverse. Then $[(a,i)] + [(-a,i)] = [(a-a,i)] = [(0,i)]$, so $[(-a, i)]$ is an inverse for $[(a,i)]$.

If $k \cdot [(a,i)] = [(0,i)]$, then $[(k \cdot a,i)] = [(0,i)]$, so $k \cdot a = 0$ forcing $a = 0$ since $A$ is free abelian. To see that $\rank A \left[ \frac{1}{\omega} \right] = \rank A$, let $\{ a_\alpha \}$ be a maximal linearly independent subset of $A$. Suppose $[(0,i)] = \sum_\alpha c_\alpha \cdot [(a_\alpha, i)]$ for $c_\alpha \in \NN$ with all but finitely many nonzero. Then $[(0,i)] = \sum_\alpha [(c_\alpha \cdot a_\alpha, i)] = [(\sum_\alpha c_\alpha \cdot a_\alpha, i)]$, so $0 = \sum_\alpha c_\alpha \cdot a_\alpha$, and since $\{a_i\}$ is linearly independent, $c_\alpha = 0$ for all $\alpha$. Hence $\{ [(a_\alpha, i)]\}$ is a linearly independent subgroup of $A \left[ \frac{1}{\omega} \right].$ To see that it is maximal, take $c \in \NN$ and $b \in A$. Then $\sum_\alpha c_\alpha [(a_\alpha,i)] + c [(b,i)] = [(\sum_\alpha c_\alpha \cdot a_\alpha + c\cdot b, i)] = [(0,i)]$, by the maximality of $\{a_\alpha\}$.
\end{proof}

\begin{rmk}
We have $A\Big[\frac{1}{\omega}\Big] \cong A \bigotimes \ZZ \Big[\frac{1}{\omega} \Big]$ via the map $[a,j] \to a \bigotimes \frac{1}{n_j}$. We will regard the elements $[(a, j)]$ as formal fractions and write $a/ n_j$ for $[(a, j)]$. 
\end{rmk}

We now state the main theorem of this section about the torsion-free component of $K_0(C^*(E,\omega))$. Recall that the torsion subgroup of an abelian group $A$ consists of the nonzero elements of $A$ which have finite order.

\begin{thm}\label{supchar}
Let $E$ be a strongly connected finite directed graph. Let $\Pp_E$ denote the period of $E$, and let $l = \gcd(\Pp_E,\omega)$. Suppose $1$ is an eigenvalue of $A_E^t$ and that the only roots of unity that are eigenvalues of $A_E^t$ are the $\Pp_E$-th roots of unity. Let $\omega = (n_k)_{k=1}^\infty$ be a multiplicative sequence. Let $\tor_E$ denote the torsion subgroup of $K_0(C^*(E))$, and $tor_{(E,\omega)}$ the torsion subgroup of $K_0(C^*(E,\omega))$. There is an isomorphism \[ \Psi:K_0(C^*(E,\omega))/ \tor_{(E,\omega)} \to \bigoplus_{i=1}^l \big(K_0(C^*(E)) / \tor_{E} \big) \Big[ \frac{1} {\omega } \Big] \] satisfying \[ \Psi([1_{C^*(E,\omega)}]_0 + \tor_{E,\omega}) = ([1_{C^*(E)}]_0 + \tor_E, \dots, [1_{C^*(E)}]_0 + \tor_{E}). \]
\end{thm}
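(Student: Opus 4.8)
The plan is to compute $K_0(C^*(E,\omega))$ as a direct limit and then pass to the torsion-free quotient, mirroring the strategy used for $K_1$ in Theorem~\ref{K_1 limit}. First I would invoke continuity of $K$-theory \cite[Theorem~6.3.2]{Rordam2000} to write $K_0(C^*(E,\omega))\cong\varinjlim\big(K_0(C^*(E(n_k))),K_0(\tilde j_{n_k,n_{k+1}})\big)$. Since direct limits are exact and carry torsion subgroups to torsion subgroups, quotienting by torsion commutes with the limit, so it suffices to understand the system obtained by reducing each $K_0(C^*(E(n_k)))$ modulo its torsion subgroup together with the induced connecting maps. Using $K_0(C^*(E(n)))\cong\coker(1-A_{E(n)}^t)$ \cite[Theorem~7.1]{Raeburn:Graphalgebras05} together with Lemma~\ref{cokeriso}, each stage becomes $\coker(1-A_E^{n_k})^t$; and for every $k$ with $\gcd(\Pp_E,n_k)=l$, Lemma~\ref{cmkdet} makes $\sum_{j=0}^{n_k/l-1}(A_E^{jl})^t$ invertible over $\QQ$. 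Because this factor is a polynomial in $A_E^t$ it leaves $\im(1-A_E^l)^t$ invariant, so the factorisation $(1-A_E^{n_k})^t=\big(\sum_{j=0}^{n_k/l-1}(A_E^{jl})^t\big)(1-A_E^l)^t$ gives $\im(1-A_E^{n_k})^t\subseteq\im(1-A_E^l)^t$ with torsion quotient; hence $\coker(1-A_E^{n_k})^t$ and $\coker(1-A_E^l)^t$ have the same torsion-free part, identified by the natural reduction map.

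With all sufficiently large stages identified with $\coker(1-A_E^l)^t$ modulo torsion, Proposition~\ref{mult} says the connecting maps become multiplication by $n_{k+1}/n_k$. The next step is the purely algebraic observation that, for any abelian group $A$, the direct limit of $A\xrightarrow{\times(n_{k+1}/n_k)}A\xrightarrow{\times(n_{k+2}/n_{k+1})}\cdots$ is precisely $A\big[\tfrac1\omega\big]$ as built in Lemma~\ref{adjoingroup}, with the stage-$k$ structure map sending $a\mapsto a/n_k$; cofinality lets me discard the finitely many initial stages with $\gcd(\Pp_E,n_k)\ne l$. This identifies $K_0(C^*(E,\omega))/\tor_{(E,\omega)}$ with $\big(\coker(1-A_E^l)^t/\tor\big)\big[\tfrac1\omega\big]$. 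Finally, Corollary~\ref{C*(E(l))1} gives $\coker(1-A_E^l)^t\cong\bigoplus_{i=1}^l\coker(1-A_E^t)$, hence modulo torsion $\bigoplus_{i=1}^l\big(K_0(C^*(E))/\tor_E\big)$; since $\big[\tfrac1\omega\big]$ is $-\otimes\ZZ\big[\tfrac1\omega\big]$ and so commutes with finite direct sums, I obtain the target $\bigoplus_{i=1}^l\big(K_0(C^*(E))/\tor_E\big)\big[\tfrac1\omega\big]$.

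It remains to verify the formula for the unit, and this I expect to be the main obstacle. Because $\tilde j_{n_k,n_{k+1}}$ sends the full sum of vertex projections of $C^*(E(n_k))$ to that of $C^*(E(n_{k+1}))$, the connecting maps are unital, so $[1_{C^*(E,\omega)}]_0$ is the coherent class represented at each stage by $[1_{C^*(E(n_k))}]_0=\sum_{\mu\in E^{<n_k}}[p_{n_k,\mu}]_0$. Chasing this through the isomorphisms above, the unit corresponds to $\sum_{\mu\in E^{<n_k}}\delta_{s(\mu)}=\sum_{j=0}^{n_k-1}(A_E^j)^t\mathbf 1$ in $\coker(1-A_E^{n_k})^t$, where $\mathbf 1:=\sum_{v\in E^0}\delta_v$. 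The delicate point is the normalisation: using that $(A_E^l)^t$ acts as the identity modulo $\im(1-A_E^l)^t$, the length sum should collapse into $n_k/l$ equal contributions over each residue class modulo $l$, and after applying Corollary~\ref{C*(E(l))1} and the localisation factor $1/n_k$ the class must be shown to equal \emph{exactly} the diagonal $\big([1_{C^*(E)}]_0+\tor_E,\dots,[1_{C^*(E)}]_0+\tor_E\big)$, and not merely a scalar multiple of it. Pinning down this constant---equivalently, choosing the cokernel identifications of Lemma~\ref{cokeriso} and Corollary~\ref{C*(E(l))1} compatibly so that the $n_k$-fold path-length sum and the $1/n_k$ localisation cancel correctly across the $l$ summands---is where the real care is needed; the rank bookkeeping (each summand has rank $\rank\coker(1-A_E^t)$, which $\big[\tfrac1\omega\big]$ preserves by Lemma~\ref{adjoingroup}) serves only as a consistency check.
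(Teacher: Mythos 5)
Your outline reproduces the paper's proof almost step for step: continuity of $K_0$, the identification of each stage with $\coker(1-A_E^{n_k})^t$ (Lemma~\ref{cokeriso} and Remark~\ref{raeburnsktheory}), commuting the torsion quotient past the direct limit (true, but the paper proves this as Lemma~\ref{limtor}; your one-line appeal to exactness should be expanded or cited), the identification of the connecting maps with multiplication by $n_{k+1}/n_k$ (Corollary~\ref{cor:torsionfreeiso} and Proposition~\ref{mult}), the algebraic limit Lemma~\ref{limad}, and the $l$-fold splitting of Corollary~\ref{C*(E(l))1}. The one structural difference is how the initial stages are handled: you discard them by cofinality, whereas the paper uses \cite[Proposition~3.11]{RobertsonRoutSims} to replace $\omega$ by $\omega'$ with $n_1'=l$, so that the very first stage of the system is $C^*(E(l))$ and the unit can be computed once and for all there, via Lemma~\ref{C*(E(l))2}. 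That trick is not cosmetic; it is precisely what makes the unit bookkeeping clean.

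On the step you flag as the main obstacle, your stated target is in fact not what the diagram chase yields, and it is worth carrying out your own outline to see this. At a stage $k$ with $\gcd(\Pp_E,n_k)=l$, the unit is $\sum_{j=0}^{n_k-1}(A_E^j)^t\big(\sum_{v\in E^0}\delta_v\big)$; writing $j=ql+r$ and using Lemma~\ref{identitycokernel} (for the matrix $A_E^l$), this collapses modulo $\im(1-A_E^l)^t$ to $(n_k/l)\sum_{r=0}^{l-1}(A_E^r)^t\big(\sum_{v\in E^0}\delta_v\big)$, and the inner sum is carried by $\rho$ to the exact diagonal $([1_{C^*(E)}]_0+\tor_E,\dots,[1_{C^*(E)}]_0+\tor_E)$ --- this is the computation inside Lemma~\ref{C*(E(l))2}. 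After the localisation factor $1/n_k$ you are left with the diagonal multiplied by $(n_k/l)/n_k$, i.e.\ the diagonal divided by $l$, \emph{not} the diagonal itself: the cancellation you hoped for leaves a residual $1/l$. This is not a defect of your plan but a normalisation ambiguity already present in the paper: the last line of the paper's own proof reads $\Psi([1_{C^*(E,\omega')}]) = ([1_{C^*(E)}]_0+\tor_E,\dots,[1_{C^*(E)}]_0+\tor_E)/l$, and it is this $/l$ form that Theorem~\ref{omegarecovery} actually uses downstream. Since $l$ divides $n_k$ for large $k$, multiplication by $l$ is an automorphism of $\bigoplus_{i=1}^l\big(K_0(C^*(E))/\tor_E\big)\big[\frac{1}{\omega}\big]$, so one may compose $\Psi$ with it to obtain the formula exactly as displayed in the theorem; but you must either make that rescaling explicit or, like the paper, keep the $1/l$ and carry it through the classification argument. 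As written, your proposal would stall at this point trying to prove an identity (unit $\mapsto$ diagonal with no scalar, under the stage-$n_k$ normalisation) that is false.
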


To prove Theorem~\ref{supchar} we need a series of lemmas. We begin by studying $K_0(C^*(E(n)) \cong \coker(1- A_{E(n)}^t)$ for $n \geq 1$.

\begin{lem}\label{mu s(mu)}
Let $E$ be a row-finite directed graph with no sources and let $n \geq 1$. Then
\begin{align}\label{eq:matmult} A_{E(n)}^t \delta_{\mu,n} =
\begin{cases} \delta_{\mu_2 \dots \mu_{|\mu|},n}
& \text{ if } \mu \in E^{<n} \backslash E^0 \\
\sum_{\lambda \in \mu E^n} \delta_{\lambda_2 \dots \lambda_n,n} & \text{ if } \mu \in E^0. \end{cases} \end{align} Moreover, $\delta_{\mu,n} - \delta_{s(\mu),n} \in \im(1-A_{E(n)}^t)$ for each $\mu \in E^{<n}$.
\end{lem}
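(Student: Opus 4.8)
The plan is to establish the matrix-action formula \eqref{eq:matmult} by unwinding the definition of the vertex matrix of $E(n)$, and then to deduce the membership statement by a telescoping argument.

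\emph{Step 1: the formula.} First I would recall that, in the paper's convention, $A_{E(n)}^t \delta_{\mu,n} = \sum_{\nu \in E^{<n}} A_{E(n)}(\mu,\nu)\,\delta_{\nu,n}$, where $A_{E(n)}(\mu,\nu) = |\mu E(n)^1 \nu|$ counts the edges $(e,\tau) \in E(n)^1$ with $r_n(e,\tau) = \mu$ and $s_n(e,\tau) = \nu$; since $s_n(e,\tau) = \tau$, such an edge is determined by $e \in E^1$ and $\tau = \nu$ with $r_n(e,\nu) = \mu$, so I must count these. I would then split on the two defining branches of $r_n$. When $\mu \in E^{<n} \setminus E^0$, the equation $r_n(e,\nu) = \mu$ can hold only via the branch $r_n(e,\nu) = e\nu$ (the branch $r_n(e,\nu) = r(e)$ lands in $E^0$, which $\mu$ is not); this forces $e = \mu_1$ and $\nu = \mu_2 \cdots \mu_{|\mu|}$, and since $|\mu| \le n-1$ the length $|\nu| = |\mu|-1 \le n-2 < n-1$ automatically meets the branch condition. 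This is the unique such edge, giving $A_{E(n)}^t\delta_{\mu,n} = \delta_{\mu_2\cdots\mu_{|\mu|},n}$, with the understanding that when $|\mu|=1$ the empty truncation is the vertex $s(\mu)$.

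For the second case, $\mu \in E^0$: the branch $r_n(e,\nu) = e\nu$ cannot equal a length-zero path, so only the branch $r_n(e,\nu) = r(e) = \mu$ contributes, which requires $|\nu| = n-1$, $r(e) = \mu$, and $r(\nu) = s(e)$. The key observation here is that concatenation $\lambda = e\nu$ sets up a bijection between such pairs $(e,\nu)$ and the paths $\lambda \in \mu E^n$, under which $\nu = \lambda_2\cdots\lambda_n$. Summing over this bijection yields $A_{E(n)}^t\delta_{\mu,n} = \sum_{\lambda \in \mu E^n}\delta_{\lambda_2\cdots\lambda_n,n}$, as claimed.

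\emph{Step 2: the telescoping.} For the ``Moreover'' clause I would use only the first branch of \eqref{eq:matmult}. For any $\nu \in E^{<n}\setminus E^0$ it gives $(1-A_{E(n)}^t)\delta_{\nu,n} = \delta_{\nu,n} - \delta_{\nu_2\cdots\nu_{|\nu|},n}$, so deleting the leading edge of $\nu$ changes $\delta_{\nu,n}$ by an element of $\im(1-A_{E(n)}^t)$. Applying this repeatedly along the chain $\mu,\ \mu_2\cdots\mu_{|\mu|},\ \mu_3\cdots\mu_{|\mu|},\ \dots,\ \mu_{|\mu|},\ s(\mu)$ and summing the successive differences telescopes to $\delta_{\mu,n} - \delta_{s(\mu),n} \in \im(1-A_{E(n)}^t)$; the case $\mu \in E^0$ is immediate since then $s(\mu) = \mu$.

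\emph{Main obstacle.} There is no deep difficulty here; the only real care lies in the bookkeeping for the two branches of $r_n$ and in the degenerate boundary cases. In particular I expect the fiddly points to be confirming that the length constraint $|\nu| = n-1$ is exactly what activates the vertex-range branch in the $\mu \in E^0$ case, and that the truncation $\mu_2\cdots\mu_{|\mu|}$ reduces to the source vertex $s(\mu)$ when $|\mu|=1$, so that the telescoping in Step 2 terminates cleanly at $\delta_{s(\mu),n}$.
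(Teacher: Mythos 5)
Your proposal is correct and takes essentially the same approach as the paper: both unwind the definition of $A_{E(n)}$ by counting edges $(e,\nu) \in E(n)^1$ with $r_n(e,\nu) = \mu$ according to the two branches of $r_n$ (the concatenation branch for $\mu \in E^{<n}\setminus E^0$, the vertex branch with $|\nu| = n-1$ for $\mu \in E^0$), and both deduce the ``Moreover'' clause by iterating the first case of \eqref{eq:matmult}. Your telescoping sum is just the expanded form of the paper's one-line argument $\delta_{\mu,n}-\delta_{s(\mu),n} = (1-A_{E(n)}^{|\mu|})^t\delta_{\mu,n} = (1-A_{E(n)}^t)\sum_{i=0}^{|\mu|-1}(A_{E(n)}^i)^t\delta_{\mu,n} \in \im(1-A_{E(n)}^t)$.
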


\begin{proof}

 Let $\mu \in E^n \setminus E^0$. We calculate
\begin{align*}
A_{E(n)}^t \delta_{\mu,n} &= \sum_{\nu \in E^{<n}} A_{E(n)}^t
(\nu,\mu) \delta_{\nu,n} = \sum_{\nu \in E^{<n}} |\mu E(n)^1 \nu| \delta_{\nu,n} \\
&= \sum_{\nu \in E^{<n}, e \in E^1 r(\nu), [e \nu]_n = \mu} \delta_{\nu,n} = \delta_{\mu_2 \dots \mu_{|\mu|},n}.
\end{align*}

Let $\mu \in E^0$. Then \[ A_{E(n)}^t \delta_{\mu,n} = \sum_{\nu \in E^{<n}} A_{E(n)}^t(\nu,\mu) \delta_{\nu,n}
= \sum_{\nu \in E^{<n}} |\mu E(n)^1 \nu| \delta_{\nu,n} = \sum_{\lambda \in \mu E^n} \delta_{\lambda_2 \dots \lambda_n,n}. \]

The final statement clearly holds when $\mu \in E^0$, so let $\mu \in E^{<n} \backslash E^0$. Repeated applications of the first case of \eqref{eq:matmult} give $( A_{E(n)}^{|\mu|})^t \delta_{\mu,n} = \delta_{s(\mu),n},$ so $\delta_{\mu,n} - \delta_{s(\mu),n} = (1- A_{E(n)}^{|\mu|})^t \delta_{\mu,n} \in \im (1- A_{E(n)}^t)$.
\end{proof}

\begin{lem}\label{cokeriso}

Let $E$ be a row-finite directed graph with no sources and let $n \geq 1$. There is an isomorphism $\psi_n: \coker(1 - A_E^n)^t \to \coker(1 - A_{E(n)}^t)$ satisfying $\psi_n (\delta_{v} + \im(1 - A_E^n)^t) = \delta_{v,n} + \im(1-A_{E(n)}^t)$ for $v \in E^0$.

\end{lem}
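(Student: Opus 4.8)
The plan is to realise $\psi_n$ as the map induced on cokernels by the inclusion $\iota_n\colon\ZZ^{E^0}\to\ZZ^{E^{<n}}$, and then to exhibit an explicit two-sided inverse induced by the ``take the source'' assignment $\delta_{\mu,n}\mapsto\delta_{s(\mu)}$, mirroring the construction of mutually inverse maps in the proof of Lemma~\ref{lem:keriso}.

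First I would establish the intertwining relation $(A_{E(n)}^n)^t\circ\iota_n=\iota_n\circ(A_E^n)^t$. Applying the second case of \eqref{eq:matmult} once and then the first case $n-1$ times shows that $(A_{E(n)}^n)^t\delta_{v,n}=\sum_{\lambda\in vE^n}\delta_{s(\lambda),n}=\sum_{w\in E^0}|vE^nw|\,\delta_{w,n}=\iota_n((A_E^n)^t\delta_v)$ for each $v\in E^0$. Consequently $(1-A_{E(n)}^n)^t\iota_n=\iota_n(1-A_E^n)^t$, and since $(1-A_{E(n)}^n)^t=(1-A_{E(n)}^t)\sum_{i=0}^{n-1}(A_{E(n)}^i)^t$ we obtain $\iota_n(\im(1-A_E^n)^t)\subseteq\im(1-A_{E(n)}^n)^t\subseteq\im(1-A_{E(n)}^t)$. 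Hence $\iota_n$ descends to the claimed homomorphism $\psi_n$ with $\psi_n(\delta_v+\im(1-A_E^n)^t)=\delta_{v,n}+\im(1-A_{E(n)}^t)$.

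Next I would construct the inverse. The map $\ZZ^{E^{<n}}\to\ZZ^{E^0}$ given by $\delta_{\mu,n}\mapsto\delta_{s(\mu)}$ descends to a homomorphism $\chi_n\colon\coker(1-A_{E(n)}^t)\to\coker(1-A_E^n)^t$: evaluating it on $(1-A_{E(n)}^t)\delta_{\mu,n}$ via \eqref{eq:matmult} yields $0$ when $\mu\in E^{<n}\setminus E^0$, since both resulting terms have source $s(\mu)$, and yields $(1-A_E^n)^t\delta_\mu\in\im(1-A_E^n)^t$ when $\mu\in E^0$. Finally $\chi_n\circ\psi_n=\id$ because $s(v)=v$ for $v\in E^0$, and $\psi_n\circ\chi_n=\id$ because the final statement of Lemma~\ref{mu s(mu)} gives $\delta_{\mu,n}\equiv\delta_{s(\mu),n}$ modulo $\im(1-A_{E(n)}^t)$; this identifies $\chi_n$ as a two-sided inverse of $\psi_n$.

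The only genuinely delicate point I anticipate is the verification of the intertwining relation by iterating \eqref{eq:matmult}, specifically confirming that applying $(A_{E(n)}^t)^{n-1}$ to a length-$(n-1)$ residue strips it all the way down to its source vertex; once this is in place, both the well-definedness of $\psi_n$ and the inverse identities follow as direct consequences of Lemma~\ref{mu s(mu)}.
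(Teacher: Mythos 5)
Your proposal is correct and follows essentially the same route as the paper: the paper also defines $\psi_n$ on generators by $\delta_v \mapsto \delta_{v,n}$, verifies well-definedness via the same iterated application of \eqref{eq:matmult} giving $(A_{E(n)}^n)^t\delta_{v,n} = \sum_{\lambda \in vE^n}\delta_{s(\lambda),n}$ together with $\im(1-A_{E(n)}^n)^t \subseteq \im(1-A_{E(n)}^t)$, and constructs the same inverse (your $\chi_n$ is the paper's $\varphi_n$, induced by $\delta_{\mu,n}\mapsto\delta_{s(\mu)}$), with the two composite identities checked exactly as you do using the final statement of Lemma~\ref{mu s(mu)}. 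No gaps.
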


\begin{proof}

Define a map $\psi_n: \ZZ^{E^0} \to \coker(1 - A_{E(n)}^t)$ by $\psi_n(\delta_{v}) =
\delta_{v,n} + \im(1- A_{E(n)}^t)$. We show that $\psi_n(\im(1-A_E^n)^t) \subseteq \im(1-A_{E(n)}^t)$. Let $v \in E^0$. Repeated applications of \eqref{eq:matmult} give $(A_{E(n)}^n)^t \delta_{v,n} = \sum_{\lambda \in v E^n} \delta_{s(\lambda),n} = \sum_{w \in E^0} |v E^n w| \delta_{w,n} = \sum_{w \in E^0} (A_E^n)^t (w,v) \delta_w = \psi_n \big( (A_E^n)^t \delta_v \big)$, so \[ \psi_n \big((1- A_E^n)^t \delta_{v} \big) = (1- A_{E(n)}^n)^t \delta_{v,n} + \im(1-A_{E(n)}^n)^t = \im(1 - A_{E(n)}^n)^t \subseteq \im(1 - A_{E(n)}^t). \] Thus $\psi_n$ descends to a homomorphism $\coker(1 - A_E^n)^t \to \coker(1 - A_{E(n)}^t),$ which we also label by $\psi_n$, satisfying $\psi_n (\delta_{v} + \im(1 - A_E^n)^t) = \delta_{v,n} + \im(1-A_{E(n)}^t)$ for $v \in E^0$.

Define a map $\varphi_n: \ZZ^{E^{<n}} \to \coker(1 - A_E^n)^t$ by $\varphi_n(\delta_{\mu,n}) = \delta_{s(\mu)} + \im(1-A_E^n)^t$.
We show that $\varphi_n(\im(1 - A_{E(n)}^t) = \im(1-A_E^n)^t$. Take $(1 - A_{E(n)}^t) \delta_{\mu,n} \in \im(1 - A_{E(n)}^t)$. If $\mu \in E^{<n} \backslash E^0$, then \[ \varphi_n((1-A_{E(n)}^t) \delta_{\mu,n}) = \varphi_n(\delta_{\mu,n} - \delta_{\mu_2 \dots \mu_{|\mu|},n}) = \delta_{s(\mu)} - \delta_{s(\mu)} + \im(1-A_E^n)^t = \im(1-A_E^n)^t, \] by the first case of \eqref{eq:matmult}. If $\mu \in E^0$, then applying the second case of \eqref{eq:matmult} at the first equality, we have
\begin{align*} \varphi_n \big(\delta_{v,n} - A_{E(n)}^t \delta_{v,n} \big)
&= \varphi_n \big(\delta_{v,n} - \sum_{\lambda \in v E^n} \delta_{\lambda_2 \dots \lambda_n,n} \big) \\
&= \delta_{v} - \sum_{\lambda \in v E^n} \delta_{s(\lambda)} + \im(1-A_E^n)^t \\
&= \delta_{v} - \sum_{w \in E^0} |v E^n w| \delta_{w} + \im(1-A_E^n)^t \\
&= \delta_{v} - \sum_{w \in E^0} (A_E^n)^t(w,v) \delta_{w} + \im(1-A_E^n)^t \\
&= (1 - A_E^n)^t \delta_{v} + \im(1-A_E^n)^t \\
&= \im(1-A_E^n)^t. \end{align*}
Thus $\varphi_n$ descends to a homomorphism $\coker(1- A_{E(n)}^t) \to \coker(1 - A_E^n)^t$, which we also label $\varphi_n$.

To show that $\psi_n$ is an isomorphism, we show that $\psi_n$ and $\varphi_n$ are mutually inverse. Let $\mu \in E^{<n}$. Then \begin{align*} \psi_n(\varphi_n(\delta_{\mu,n} + \im(1 - A_{E(n)}^t))) &=
\varphi_n(\delta_{s(\mu)}+\im(1 - A_E^n)^t) \\ &= \delta_{s(\mu),n}+\im(1 - A_{E(n)}^t) =
\delta_{\mu,n} + \im(1-A_{E(n)}^t) \end{align*} by Lemma~\ref{mu s(mu)}, so $\psi_n \circ \varphi_n$ is the identity on $\coker(1-A_{E(n)}^t)$. Now, let $v \in E^0$. Then \[ \varphi_n(\psi_n (\delta_{v}+\im(1 - A_E^n)^t)) = \varphi_n(\delta_{v,n} +\im(1 - A_{E(n)}^t)) = \delta_{v}+\im(1 - A_E^n)^t, \] so $\varphi_n \circ \psi_n$ is the identity on $\coker(1-A_E^n)^t$.
\end{proof}

\begin{rmk}\label{raeburnsktheory} For each $n \geq 1$, let $\sigma_n: \coker(1 - A_{E(n)}^t) \to K_0(C^*(E(n)))$ be the isomorphism of \cite[Theorem~7.16]{Raeburn:Graphalgebras05}. Looking into the proof of \cite[Theorem~7.1]
{Raeburn:Graphalgebras05} shows that this isomorphism is given by $\sigma_n(\delta_{\mu,n} + \im(1 - A_{E(n)}^t)) = [p_{\mu,n}]_0$ for $\mu \in E^{<n}$. So $\sigma_n \circ \psi_n: \coker (1 - A_E^n)^t \to K_0(C^*(E(n)))$ is an isomorphism
satisfying $(\sigma_n \circ \psi_n)\big(\delta_{v} + \im(1-A_E^n)^t\big) = [p_{v,n}]_0$ for $v \in E^0$. By Lemma~\ref{mu s(mu)}, we have $[p_{\mu,n}]_0 - [p_{v,n}]_0 = \sigma_{n} (\delta_{\mu,n} - \delta_{v,n} + \im(1 - A_{E(n)}^t)) = 0$ for any $\mu \in E^{<n}v$. So $(\sigma_n \circ \psi_n)\big(\delta_{v} + \im(1-A_E^n)^t\big) = [p_{\mu,n}]_0$ for any $\mu \in E^{<n}v$. \end{rmk}

\begin{lem}\label{comm}
Let $E$ be a row-finite directed graph with no sources and let $n, m \in \NN$. The following diagram commutes.

\begin{center}

\begin{tikzcd}
\ZZ^{E^0} \arrow{d}{} \arrow{r}{\sum_{i = 0}^{m-1} (A_E^{i n})^t}
& \ZZ^{E^0} \arrow{d}{} \\
\coker(1-A_E^n)^t \arrow{d}{\sigma_n \circ \psi_n}
& \coker(1-A_E^{mn})^t \arrow{d}{\sigma_{mn} \circ \psi_{mn}} \\
K_0(C^*(E(n))) \arrow{r}{K_0(\tilde j_{n,mn})}
& K_0(C^*(E(mn)))
\end{tikzcd}

\end{center}

\end{lem}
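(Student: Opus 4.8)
The plan is to verify commutativity of the outer rectangle on the generators $\delta_v$, $v \in E^0$, of the top-left copy of $\ZZ^{E^0}$, since these generate the group and all maps involved are homomorphisms. Tracing $\delta_v$ first down the left-hand column and then along the bottom, Remark~\ref{raeburnsktheory} identifies $(\sigma_n \circ \psi_n)(\delta_v + \im(1-A_E^n)^t)$ with the class $[p_{v,n}]_0$, so its image in $K_0(C^*(E(mn)))$ is $K_0(\tilde j_{n,mn})([p_{v,n}]_0)$. I would then apply the defining formula for $\tilde j_{n,mn}$ on vertex projections, namely $\tilde j_{n,mn}(p_{n,v}) = \sum_{\tau \in E^{<mn},\, [\tau]_n = v} p_{mn,\tau}$.

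The key combinatorial step is to describe the indexing set $\{\tau \in E^{<mn} : [\tau]_n = v\}$. Since $[\tau]_n$ is the prefix of $\tau$ of length $|\tau| \bmod n$, the condition $[\tau]_n = v \in E^0$ forces $r(\tau) = v$ and $|\tau| \in n\NN$; hence these are exactly the paths lying in $\bigsqcup_{i=0}^{m-1} v E^{in}$. Next I would invoke Remark~\ref{raeburnsktheory} a second time: for each such $\tau$, since $\tau \in E^{<mn}$, Lemma~\ref{mu s(mu)} shows the class $[p_{mn,\tau}]_0$ depends only on $s(\tau)$ and equals $(\sigma_{mn} \circ \psi_{mn})(\delta_{s(\tau)} + \im(1-A_E^{mn})^t)$. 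Summing over $\tau \in vE^{in}$ and using $\sum_{\tau \in vE^{in}} \delta_{s(\tau)} = \sum_{w \in E^0} |vE^{in}w|\, \delta_w = (A_E^{in})^t \delta_v$ collapses the double sum, yielding
\[
K_0(\tilde j_{n,mn})([p_{v,n}]_0) = (\sigma_{mn} \circ \psi_{mn})\Big(\textstyle\sum_{i=0}^{m-1}(A_E^{in})^t \delta_v + \im(1-A_E^{mn})^t\Big).
\]
This is precisely the image of $\delta_v$ obtained by first applying the top map $\sum_{i=0}^{m-1}(A_E^{in})^t$ and then descending the right-hand column, so the rectangle commutes.

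The main obstacle is the bookkeeping: correctly translating the residue condition $[\tau]_n = v$ into the combined range and length constraints, and then recognising the row-sums $\sum_{\tau} \delta_{s(\tau)}$ as the matrix action $(A_E^{in})^t \delta_v$. Once Remark~\ref{raeburnsktheory} is used to reduce every projection class appearing to one indexed by a source vertex, the remainder is a routine reorganisation of the sum.
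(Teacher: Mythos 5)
Your proposal is correct and follows essentially the same route as the paper's proof: both trace the generator $\delta_v$ through the two sides of the diagram, use Remark~\ref{raeburnsktheory} to identify $(\sigma_n\circ\psi_n)(\delta_v+\im(1-A_E^n)^t)$ with $[p_{v,n}]_0$ and to replace each $[p_{mn,\tau}]_0$ by $[p_{s(\tau),mn}]_0$, decompose the index set $\{\tau\in E^{<mn}:[\tau]_n=v\}$ as $\bigsqcup_{i=0}^{m-1}vE^{in}$, and recognise $\sum_{\tau\in vE^{in}}\delta_{s(\tau)}=(A_E^{in})^t\delta_v$. The only cosmetic difference is that the paper evaluates both composite paths separately and matches them against the common expression $\sum_{i=0}^{m-1}\sum_{\mu\in vE^{in}}[p_{\mu,mn}]_0$, whereas you rewrite one path directly into the other.
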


\begin{proof} Let $\eta_n := \sigma_n \circ \psi_n$, and fix $v \in E^0$. Then \begin{align*}
K_0(\tilde j_{n,mn}) (\eta_{n})(\delta_{v} + \im(1 - A_E^n)^t))
&= K_0(\tilde j_{n,mn})([p_{v,n}]_0) \\&= \sum_{\mu \in v E^{<mn}, |\mu| \in n \mathbb{N}} [p_{\mu,mn}]_0
= \sum_{i = 0}^{m-1} \sum_{\mu \in v E^{i n}} [p_{\mu, mn}]_0. \end{align*}

Now, by Remark~\ref{raeburnsktheory}, we have
\begin{align*} (\eta_{mn})\Big(\sum_{i = 0}^{m-1} (A_E^{i n})^t \delta_{v} +  \im(1 - A_E^{mn})^t \Big)
&= \eta_{mn} \Big(\sum_{i = 0}^{m-1} \sum_{\mu \in v E^{i n}} \delta_{s(\mu)} + \im(1- A_E^{mn})^t \Big) \\
&= \sum_{i = 0}^{m-1} \sum_{\mu \in v E^{i n}} (\eta_{mn})(\delta_{s(\mu)} + \im(1- A_E^{mn})^t ) \\
&= \sum_{i = 0}^{m-1} \sum_{\mu \in v E^{i n}} [p_{s(\mu),mn}]_0
= \sum_{i = 0}^{m-1} \sum_{\mu \in v E^{i n}} [p_{\mu,mn}]_0. \qedhere \end{align*} \end{proof}

\begin{cor}
Let $E$ be a row-finite directed graph with no sources and let $n, m \in \NN$.
There exists a homomorphism $\phi_{n,mn}: \coker (1 - A_E^n)^t \to \coker (1 - A_E^{mn})^t$ satisfying $\phi_{n,mn}(\delta_{v} + \im(1-A_E^n)^t) = \sum_{\mu \in v E^{<mn},|\mu| \in n \NN} \delta_{s(\mu),mn} + \im(1-A_E^{mn})^t$ for $v \in E^0$.
\end{cor}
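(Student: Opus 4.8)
The plan is to define $\phi_{n,mn}$ as the map induced by $K_0(\tilde j_{n,mn})$ under the isomorphisms of Remark~\ref{raeburnsktheory}, and then to read off its action on generators directly from the diagram of Lemma~\ref{comm}. Writing $\eta_n := \sigma_n \circ \psi_n \colon \coker(1-A_E^n)^t \to K_0(C^*(E(n)))$ and $\eta_{mn} := \sigma_{mn} \circ \psi_{mn}$ for the isomorphisms appearing in Lemma~\ref{comm}, I would set
\[ \phi_{n,mn} := \eta_{mn}^{-1} \circ K_0(\tilde j_{n,mn}) \circ \eta_n. \]
Since $\eta_n$ and $\eta_{mn}$ are isomorphisms (Remark~\ref{raeburnsktheory}) and $K_0(\tilde j_{n,mn})$ is a group homomorphism, $\phi_{n,mn}$ is a homomorphism $\coker(1-A_E^n)^t \to \coker(1-A_E^{mn})^t$ with no further checking required; this is the only genuinely structural point, and it is essentially free once Lemma~\ref{comm} is in hand.

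It then remains to compute $\phi_{n,mn}$ on the generators $\delta_v + \im(1-A_E^n)^t$. Here I would invoke the commutativity of the diagram in Lemma~\ref{comm}: tracing $\delta_v \in \ZZ^{E^0}$ down the left column and across the bottom agrees with tracing it across the top and down the right column, which gives
\[ K_0(\tilde j_{n,mn})\big(\eta_n(\delta_v + \im(1-A_E^n)^t)\big) = \eta_{mn}\Big( \textstyle\sum_{i=0}^{m-1}(A_E^{in})^t \delta_v + \im(1-A_E^{mn})^t \Big). \]
Applying $\eta_{mn}^{-1}$ then shows $\phi_{n,mn}(\delta_v + \im(1-A_E^n)^t) = \sum_{i=0}^{m-1}(A_E^{in})^t \delta_v + \im(1-A_E^{mn})^t$.

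Finally I would unwind the matrix expression into the path sum claimed in the statement. For each $0 \le i \le m-1$ we have $(A_E^{in})^t\delta_v = \sum_{w \in E^0}|vE^{in}w|\,\delta_w = \sum_{\mu \in vE^{in}}\delta_{s(\mu)}$, and summing over $i$ ranges exactly over the paths $\mu \in vE^{<mn}$ whose length lies in $n\NN$, so that $\sum_{i=0}^{m-1}(A_E^{in})^t\delta_v = \sum_{\mu \in vE^{<mn},\,|\mu|\in n\NN}\delta_{s(\mu)}$, as required.

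I do not expect a real obstacle: all the substantive work has been absorbed into Lemma~\ref{comm}, and what remains is the routine path-counting identity above (together with noting that the ambient element lies in $\ZZ^{E^0}$, so the image is a class in $\coker(1-A_E^{mn})^t$). As an alternative, self-contained route avoiding $K_0$-theory altogether, one could instead define $\phi_{n,mn}$ directly on $\ZZ^{E^0}$ by $\delta_v \mapsto \sum_{i=0}^{m-1}(A_E^{in})^t\delta_v$ and check that it descends to cokernels using the factorisation $(1-A_E^{mn})^t = \big(\sum_{i=0}^{m-1}(A_E^{in})^t\big)(1-A_E^n)^t$, valid since both factors are polynomials in $A_E^t$ and hence commute; the only point needing care there is that $\im(1-A_E^n)^t$ is carried into $\im(1-A_E^{mn})^t$, which is immediate from this factorisation.
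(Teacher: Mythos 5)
Your proposal is correct and takes essentially the same approach as the paper: the paper also defines $\phi_{n,mn} := \eta_{mn}^{-1} \circ K_0(\tilde j_{n,mn}) \circ \eta_n$ and then evaluates it on the generators $\delta_v + \im(1-A_E^n)^t$. The only cosmetic difference is that you extract the generator formula from the commuting diagram of Lemma~\ref{comm}, whereas the paper redoes that computation directly from Remark~\ref{raeburnsktheory} and the formula for $\tilde j_{n,mn}$ on vertex projections --- which is the same calculation that proves Lemma~\ref{comm} in the first place.
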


\begin{proof} Let $\eta_n := \sigma_n \circ \psi_n$, and define $\phi_{n,mn}: \coker (1 - A_E^n)^t \to \coker (1 - A_E^{mn})^t$
by $\phi_{n,mn} := \eta_{mn}^{-1} \circ K_0(\tilde j_{n,mn}) \circ \eta_n.$ Let $v \in E^0$. By Remark~\ref{raeburnsktheory}, we have
\begin{align*} \phi_{n,mn}(\delta_{v} + \im(1-A_E^n)^t)
&= (\eta_{mn}^{-1} \circ K_0(\tilde j_{n,mn}) \circ \eta_n)(\delta_{v} + \im(1-A_E^n)^t) \\
&= (\eta_{mn}^{-1} \circ K_0(\tilde j_{n,mn})) ([p_{s(\mu),n}]_0) \\
&= \eta_{mn}^{-1} \Big( \sum_{\mu \in v E^{<mn},|\mu| \in n \NN} [p_{\mu,mn}]_0 \Big) \\
&= \eta_{mn}^{-1} \Big( \sum_{\mu \in v E^{<mn},|\mu| \in n \NN} [p_{s(\mu),mn}]_0 \Big) \\
&= \sum_{\mu \in v E^{<mn},|\mu| \in n \NN} \delta_{s(\mu),mn} + \im(1-A_E^{mn})^t. \qedhere \end{align*} \end{proof}

We now look at direct limits of quotients of abelian groups by their torsion subgroups. We seek to apply the following result to the sequence $(\coker(1-A_E^{n_k})^t, \phi_{n_k,n_{k+1}})_{k=1}^\infty.$

\begin{lem}\label{limtor}
Let $(G_k, \phi_{k,k+1})$ be a directed system of abelian groups. Let $\tor_k := \tor(G_k)$ for each $k \in \NN$, and $\tor_\infty := \tor (\varinjlim G_k)$. For each $k$ there exists a homomorphism $\tilde \phi_{k,k+1}: G_k / \tor_k \to G_{k+1} / \tor_{k+1}$ such that $\tilde \phi_{k,k+1}(g + \tor_k) = \phi_{k,k+1}(g) + \tor_{k+1}$. Moreover, there is an isomorphism \[ \tilde q_\infty: \varinjlim (G_k, \phi_{k,k+1}) / \tor_\infty \to \varinjlim (G_k / \tor_k, \tilde \phi_{k,k+1}) \] such that $\tilde q_\infty(\phi_{k,\infty}(g) + \tor_\infty) = \tilde \phi_{k,\infty}(g + \tor_k)$.
\end{lem}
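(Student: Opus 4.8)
The plan is to treat both statements as instances of the functoriality and exactness properties of direct limits of abelian groups. For the first statement, observe that any homomorphism of abelian groups carries torsion elements to torsion elements, so $\phi_{k,k+1}(\tor_k) \subseteq \tor_{k+1}$. Composing $\phi_{k,k+1}$ with the quotient map $G_{k+1} \to G_{k+1}/\tor_{k+1}$ therefore yields a homomorphism $G_k \to G_{k+1}/\tor_{k+1}$ that vanishes on $\tor_k$, and the universal property of the quotient $G_k \to G_k/\tor_k$ produces the desired $\tilde\phi_{k,k+1}$ with $\tilde\phi_{k,k+1}(g + \tor_k) = \phi_{k,k+1}(g) + \tor_{k+1}$. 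Compatibility with the connecting maps, so that $(G_k/\tor_k, \tilde\phi_{k,k+1})$ is again a directed system, is immediate from this formula.

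For the second statement, I would first note that the quotient maps $q_k : G_k \to G_k/\tor_k$ constitute a morphism of directed systems from $(G_k, \phi_{k,k+1})$ to $(G_k/\tor_k, \tilde\phi_{k,k+1})$, precisely because of the defining relation for $\tilde\phi_{k,k+1}$. By functoriality of the direct limit there is then an induced homomorphism $q_\infty : \varinjlim(G_k, \phi_{k,k+1}) \to \varinjlim(G_k/\tor_k, \tilde\phi_{k,k+1})$ characterised by $q_\infty(\phi_{k,\infty}(g)) = \tilde\phi_{k,\infty}(g + \tor_k)$. The claim is that $q_\infty$ descends to the desired isomorphism $\tilde q_\infty$, so the whole proof reduces to showing $\ker q_\infty = \tor_\infty$ together with surjectivity of $q_\infty$. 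Surjectivity is immediate, since every element of the target has the form $\tilde\phi_{k,\infty}(g + \tor_k) = q_\infty(\phi_{k,\infty}(g))$.

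The heart of the argument is the identification $\ker q_\infty = \tor_\infty$. For the inclusion $\tor_\infty \subseteq \ker q_\infty$ I would first record that each $G_k/\tor_k$ is torsion-free, and that a direct limit of torsion-free abelian groups is again torsion-free: if $n \cdot \tilde\phi_{k,\infty}(y) = 0$ then $\tilde\phi_{k,m}(ny) = 0$ for some $m \ge k$, and torsion-freeness of $G_m/\tor_m$ forces $\tilde\phi_{k,m}(y) = 0$, hence $\tilde\phi_{k,\infty}(y) = 0$. Since the target is torsion-free, $q_\infty$ annihilates $\tor_\infty$. For the reverse inclusion, suppose $q_\infty(x) = 0$ and write $x = \phi_{k,\infty}(g)$; then $\tilde\phi_{k,\infty}(g + \tor_k) = 0$, so by the standard criterion for vanishing in a direct limit there is $m \ge k$ with $\phi_{k,m}(g) \in \tor_m$, whence $n\phi_{k,m}(g) = 0$ for some $n \ge 1$ and $nx = \phi_{m,\infty}(n\phi_{k,m}(g)) = 0$, giving $x \in \tor_\infty$. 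Combining the two inclusions, $q_\infty$ factors through an isomorphism $\tilde q_\infty : \varinjlim(G_k, \phi_{k,k+1})/\tor_\infty \to \varinjlim(G_k/\tor_k, \tilde\phi_{k,k+1})$ satisfying the stated formula. I expect the main obstacle to be purely organisational: one must repeatedly invoke the fact that $\phi_{k,\infty}(g) = 0$ in the direct limit exactly when $\phi_{k,m}(g) = 0$ at some finite stage $m$, and keep careful track of which maps are $\phi$ and which are $\tilde\phi$. No genuine estimate is involved, the content being entirely the exactness of filtered colimits made concrete.
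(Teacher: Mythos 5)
Your proof is correct, and it reaches the isomorphism by a genuinely different route in its second half. The first half (torsion maps to torsion, hence $\tilde\phi_{k,k+1}$ exists; functoriality of the direct limit gives $q_\infty$ with $q_\infty\circ\phi_{k,\infty} = \tilde\phi_{k,\infty}\circ q_k$) coincides with the paper's argument, and both proofs lean on the same key fact that $\ker\phi_{k,\infty} = \bigcup_m \ker\phi_{k,k+m}$ (the paper cites R{\o}rdam, Proposition~6.2.5(ii), for this). Where you diverge: you compute $\ker q_\infty$ exactly --- the inclusion $\tor_\infty \subseteq \ker q_\infty$ via your observation that $\varinjlim(G_k/\tor_k,\tilde\phi_{k,k+1})$ is torsion-free, the reverse inclusion via the vanishing criterion --- note surjectivity, and then invoke the first isomorphism theorem. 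The paper instead shows only $q_\infty(\tor_\infty) = \{0\}$ by an element chase (essentially the same chase as your reverse inclusion, run in the opposite direction), so that $q_\infty$ descends to $\tilde q_\infty$, and then builds an explicit inverse: since $\phi_{k,\infty}(\tor_k)\subseteq\tor_\infty$, each $\phi_{k,\infty}$ descends to $\psi_{k,\infty}\colon G_k/\tor_k \to \varinjlim(G_k,\phi_{k,k+1})/\tor_\infty$, the universal property assembles these into a map $\psi$, and one checks that $\psi$ and $\tilde q_\infty$ are mutually inverse on the images $\tilde\phi_{k,\infty}(G_k/\tor_k)$, which exhaust the limit. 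Your route is shorter and isolates the conceptual content (quotienting by torsion commutes with filtered colimits); the paper's route never needs to know that the target is torsion-free and produces the inverse map explicitly, at the cost of a second application of the universal property and two composition checks. One small bookkeeping point in your write-up: the fact that each $G_k/\tor_k$ is torsion-free is used but not proved; it is standard (if $n(g+\tor_k)=0$ then $ng\in\tor_k$, so $mng=0$ for some $m\ge 1$ and $g\in\tor_k$), but it carries the inclusion $\tor_\infty\subseteq\ker q_\infty$, so it deserves a half-sentence.
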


\begin{proof}

Write $Q_k := G_k / \tor_k$. For each $k \in \NN$, let $q_k: G_k \to Q_k$ be the quotient map. Let $r \in \tor_k$. Then there exists $n \geq 1$ such that $n r = 0$, and then $n \phi_{k,k+1}(r) = \phi_{k,k+1}(n r) =0$. So $\phi_{k,k+1}(\tor_k) \subseteq \tor_{k+1}$, and hence $q_{k+1} \circ \phi_{k,k+1}$ descends to a homomorphism $\tilde \phi_{k,k+1}: Q_k \to Q_{k+1}$ such that $\tilde \phi_{k,k+1}(g + \tor_k) = \phi_{k,k+1}(g) + \tor_{k+1}$ for all $g \in G_k$. So \[ (\tilde \phi_{k+1,\infty} \circ q_{k+1}) \circ \phi_{k,k+1} = \tilde \phi_{k+1,\infty} \circ \tilde \phi_{k,k+1} \circ q_k = \tilde \phi_{k,\infty} \circ q_k\] for all $k \in \NN$. Therefore the universal property of $\varinjlim (G_k,\phi_k)$ gives a homomorphism $q_\infty: \varinjlim (G_k,\phi_{k,k+1}) \to \varinjlim (Q_k,\tilde \phi_{k,k+1})$ satisfing $q_\infty \circ \phi_{k,\infty} = \tilde \phi_{k,\infty} \circ q_k$.

We show that $q_\infty$ descends to a homomorphism satisfying the desired formula. Let $p \in \tor_\infty$. Then there exists $r \in G_k$ and $n \geq 1$ such that $0=np=n\phi_{k,\infty} (r) = \phi_{k,\infty}(nr)$. By \cite[Proposition~6.2.5(ii)]{Rordam2000} we have $\ker \phi_{k,\infty} =\bigcup_{m \ge 0} \ker \phi_{k,k+m}$, so there exists $m \ge 0$ such that $0=\phi_{k,k+m}(nr) = n \phi_{k,k+m}(r)$, giving
$\phi_{k,k+m}(r) \in \tor_{k+m}$. Therefore $q_\infty(p) = q_\infty(\phi_{k,\infty}(r)) = q_\infty(\phi_{k+m,\infty} (\phi_{k,k+m}(r))) = \tilde \phi_{k+m,\infty}(q_{k+m}(\phi_{k,k+m}(r))) =0$. So $q_\infty(\tor_\infty) \subseteq \{0\}$, and hence $q_\infty$ descends to a homomorphism $\tilde q_\infty: \varinjlim (G_k, \phi_{k,k+1}) / \tor_\infty
\to \varinjlim (Q_k, \tilde \phi_{k,k+1})$ satisfying \[ \tilde q_\infty( \phi_{k,\infty}(g) + \tor_\infty) = q_\infty(\phi_{k,\infty}(g)) = \tilde \phi_{k,\infty} (q_k(g)) = \tilde \phi_{k,\infty}(g + \tor_k) \] for all $g \in G_k$.

It remains to show that $\tilde q_\infty$ is an isomorphism. We do this by finding an inverse. As in the first paragraph, we find that $\phi_{k,\infty}(\tor_k) \subseteq \tor_\infty$ since $\phi_{k,\infty}$ is a homomorphism. Therefore $\phi_{k,\infty}$ descends to a homomorphism $\psi_{k,\infty}: Q_k \to \varinjlim (G_k,\phi_{k,k+1}) / \tor_\infty$ satisfying $\psi_{k,\infty} (g + \tor_k) = \phi_{k,\infty}(g) + \tor_\infty$ for each $g \in G_k$. We have \begin{align*} \psi_{k+1,\infty} (\tilde \phi_{k,k+1}(g + \tor_k)) &= \psi_{k+1,\infty}(\phi_{k,k+1}(g)+\tor_{k+1}) = \phi_{k+1,\infty} (\phi_{k,k+1}(g)) +\tor_\infty \\
&= \phi_{k,\infty}(g) + \tor_\infty = \psi_{k,\infty}(g+\tor_k). \end{align*} So $\psi_{k+1,\infty} \circ \tilde \phi_{k,k+1}
= \psi_{k,\infty}$, and hence the universal property of $\varinjlim (Q_k, \tilde \phi_{k})$
gives a homomorphism $\psi: \varinjlim (Q_k , \tilde \phi_{k}) \to \varinjlim(G_k, \phi_{k,k+1} )/ \tor_\infty$ satisfying $\psi(\tilde \phi_{k,\infty} (g + \tor_k)) = \phi_{k,\infty}(g) + \tor_\infty$ for all $g \in G_k$.

We check that $\psi$ is an inverse for $\tilde q_\infty$. Let $g \in G_k$.
Then \begin{align*} \tilde q_\infty( \psi (\tilde \phi_{k,\infty}(g+\tor_k))) &=
\tilde q_\infty(\phi_{k,\infty}(g) + \tor_\infty) = q_\infty(\phi_{k,\infty}(g)) \\
&= \tilde \phi_{k,\infty}(q_k(g)) = \tilde \phi_{k,\infty}(g+\tor_k). \end{align*}
We also have \begin{align*} \psi(\tilde q_\infty (\phi_{k,\infty}(g) + \tor_\infty))
&= \psi(q_\infty(\phi_{k,\infty}(g)) \\
&= \psi(\tilde \phi_{k,\infty}(q_k(g))) = \phi_{k,\infty}(g) + \tor_\infty. \end{align*}
So $\tilde q_\infty \circ \psi$ is the identity on $\tilde \phi_{k,\infty} (Q_k)$ and
$\psi \circ \tilde q_\infty$ is the identity on $\phi_{k,\infty}(G_k) / \tor_\infty$,
and hence by continuity, $\psi$ and $q_\infty$ are mutually inverse.
\end{proof}

For $n \geq 1$, the torsion subgroup of $\coker(1 - A_E^{n})^t$ is
\[ \{ a + \im(1-A_E^{n})^t: a \in \ZZ^{E^0}, ma \in \im(1-A_E^{n})^t
\text{ for some } m \in \NN \}. \] Define \[ T_{n} := \{ a \in \ZZ^{E^0}:
m a \in \im(1-A_E^{n})^t \text{ for some } m \in \NN \}.\] So
$T_n = q_n^{-1}(\tor_n)$ where $q_n: \ZZ^{E^0} \to \coker(1-A_E^n)^t$
is the quotient map.

\begin{prp}\label{torsioninvariant}
Let $E$ be a strongly connected finite directed graph. Suppose $1$ is an eigenvalue of $A_E^t$ and that the only roots of unity that are eigenvalues of $A_E^t$ are the $\Pp_E$-th roots of unity. Let $\omega = (n_k)_{k=1}^\infty$ be a multiplicative sequence and let $ l := \gcd(P_E, \omega)$. Then $T_{n_k} = T_{l}$ for all $k$ such that $\gcd(\Pp_E,n_k) = l$.
\end{prp}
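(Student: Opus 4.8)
The plan is to reinterpret $T_n$ as the rational image of $(1-A_E^n)^t$ intersected with $\ZZ^{E^0}$, and then to exploit the factorisation $1-x^{n}=(1-x^l)R_{n/l}(x^l)$ together with the invertibility already supplied by Lemma~\ref{cmkdet}. Throughout write $M:=A_E^t$, so that $(1-A_E^n)^t = 1-M^n$.

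First I would record the characterisation
\[ T_n = \im_\QQ(1-M^n) \cap \ZZ^{E^0}, \]
where $\im_\QQ(1-M^n)$ denotes the image of $1-M^n$ acting on $\QQ^{E^0}$. Indeed, $a\in T_n$ means $ma=(1-M^n)b$ for some $m\in\NN$ and $b\in\ZZ^{E^0}$, which is equivalent to $a=(1-M^n)(b/m)\in\im_\QQ(1-M^n)$; conversely any $a\in\im_\QQ(1-M^n)\cap\ZZ^{E^0}$, say $a=(1-M^n)c$ with $c\in\QQ^{E^0}$, can be cleared of denominators, so that $ma=(1-M^n)(mc)$ lies in the integer image for a suitable $m\in\NN$. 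Consequently it suffices to prove the equality of $\QQ$-subspaces $\im_\QQ(1-M^{n_k})=\im_\QQ(1-M^l)$ whenever $\gcd(\Pp_E,n_k)=l$.

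Next I would fix $k$ with $\gcd(\Pp_E,n_k)=l$. Since $l=\gcd(\Pp_E,n_k)$ divides $n_k$, I may write $n_k=lm$ with $m=n_k/l$, and then
\[ 1-M^{n_k} = 1-(M^l)^m = (1-M^l)\,R_{m}(M^l), \]
with $R_m(y)=\sum_{i=0}^{m-1}y^i$ as in Lemma~\ref{polyeigen}; all factors commute, being polynomials in $M$. By Lemma~\ref{cmkdet}, the hypothesis that the only roots of unity among the eigenvalues of $A_E^t$ are the $\Pp_E$-th roots of unity gives $0\notin\sigma\big(R_{n_k/l}(M^l)\big)$, so $R_m(M^l)$ is invertible on $\QQ^{E^0}$. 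Using the identity $\im(S\circ T)=S(\im T)$ for linear maps, with $T=R_m(M^l)$ surjective (hence $\im T=\QQ^{E^0}$), I obtain
\[ \im_\QQ(1-M^{n_k}) = (1-M^l)\big(R_m(M^l)\,\QQ^{E^0}\big) = (1-M^l)\,\QQ^{E^0} = \im_\QQ(1-M^l). \]
Intersecting with $\ZZ^{E^0}$ and invoking the first paragraph yields $T_{n_k}=T_l$, as required.

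I do not expect a serious obstacle here, since the only place the spectral hypothesis is needed — the invertibility of $R_{n_k/l}(M^l)$ — has already been isolated in Lemma~\ref{cmkdet}. The one point needing care is the passage between the integer image appearing in the definition of $T_n$ and the rational image $\im_\QQ(1-M^n)$ on which the factorisation argument operates; I would state the denominator-clearing equivalence of the first paragraph cleanly, and be explicit that $l\mid n_k$ (which holds precisely because $\gcd(\Pp_E,n_k)=l$) is what licenses the factorisation $1-M^{n_k}=(1-M^l)R_{n_k/l}(M^l)$.
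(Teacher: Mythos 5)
Your proposal is correct and takes essentially the same approach as the paper: both arguments rest on the factorisation $(1-A_E^{n_k})^t=(1-A_E^l)^t\,R_{n_k/l}\big((A_E^l)^t\big)$ (valid since $l=\gcd(\Pp_E,n_k)$ divides $n_k$) and on the invertibility of $C:=R_{n_k/l}\big((A_E^l)^t\big)$ supplied by Lemma~\ref{cmkdet}. The only divergence is bookkeeping: the paper works over $\ZZ$, getting $T_{n_k}\subseteq T_l$ from $\im(1-A_E^{n_k})^t\subseteq\im(1-A_E^l)^t$ and $T_l\subseteq T_{n_k}$ by scaling with $\det C$ so that $(\det C)C^{-1}=\adj(C)$ is an integer matrix, while you pass to $\QQ^{E^0}$ via the characterisation $T_n=\im_\QQ(1-A_E^n)^t\cap\ZZ^{E^0}$ and obtain both inclusions at once from equality of rational images --- the same idea in a different wrapper.
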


\begin{proof}
Fix $k \ge K$. Let $C := \sum_{i=0}^{n_k/l-1} (A_E^{i l})^t$. We have \begin{align}\label{eqn:determ} (1-A_E^{n_{k}})^t =
(1-A_E^l)^t \Big(\sum_{i=0}^{n_{k}/l-1} (A_E^{il})^t \Big) = (1-A_E^{l})^t C. \end{align}
So $\im(1-A_E^{n_{k}})^t \subseteq \im(1-A_E^{l})^t$. Now take $x \in T_{n_k}$. Then there exists $m \in \NN$ such that
$mx \in \im(1-A_E^{n_k})^t \subseteq \im(1-A_E^{l})^t$. Hence $T_{n_k} \subseteq T_l$.

For the reverse inclusion, take $x \in T_l$. Then $m x = (1 - A_E^{l})^t y$, for some $m \in \NN$ and $y \in \ZZ^{E^0}$.
Equation~\eqref{eqn:determ} gives \begin{align*} (m \det C) x &= (\det C) (1 - A_E^{l})^t y
= (1 - A_E^{l})^t C (\det C) C^{-1} y \\ &= (1 - A_E^{n_{k}})^t (\det C) C^{-1} y \in \im(1 - A_E^{n_{k}})^t. \end{align*}
By Lemma~\ref{cmkdet} $\det C \not = 0$, so $T_l \subseteq T_{n_k}$. \end{proof}

\begin{rmk}
If we could compute $\det C$, we could compute $\det (1-A_E^{n_k})^t$. Then (when $1$ is not an eigenvalue), we could calculate $|K_0(C^*(E,n_k))|$ and try to use Kribs' argument for \cite[Theorem~5.1]{Kribs02} to prove a classification result for the generalised Bunce--Deddens algebras constructed from a finite strongly connected graph whose vertex matrix does not have eigenvalue $1$.
\end{rmk}

\begin{lem}
Let $E$ be a strongly connected finite directed graph. Suppose $1$ is an eigenvalue of $A_E^t$ and that the only roots of unity that are eigenvalues of $A_E^t$ are the $\Pp_E$-th roots of unity. Let $\omega = (n_k)_{k=1}^\infty$ be a multiplicative sequence, and let $l := \gcd(P_E, \omega)$. For each $k$ such that $\gcd(\Pp_E,n_k)=l$, there is an isomorphism $\tau: \coker(1-A_E^{n_k})^t / \tor_{n_k} \to \ZZ^{E^0} / T_l$ satisfying \begin{align}\label{eq:torsion preimage} \tau(a + \im(1-A_E^{n_k})^t + \tor_{n_k}) = a + T_l \end{align} for $a \in \ZZ^{E^0}$.
\end{lem}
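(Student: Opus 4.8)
The plan is to obtain $\tau$ as a direct consequence of Proposition~\ref{torsioninvariant} together with a routine isomorphism-theorem argument. The essential observation is that once we know $T_{n_k} = T_l$, there is essentially nothing left to do beyond unwinding the definitions of $\coker$, $\tor_{n_k}$, and $T_{n_k}$. All of the spectral content has already been absorbed into Proposition~\ref{torsioninvariant} (which in turn rests on Lemma~\ref{cmkdet}), so the present statement is purely formal.

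First I would form the composite of quotient maps
\[
\pi \colon \ZZ^{E^0} \xrightarrow{\;q_{n_k}\;} \coker(1-A_E^{n_k})^t \longrightarrow \coker(1-A_E^{n_k})^t / \tor_{n_k},
\]
where $q_{n_k}$ is the map with $q_{n_k}(a) = a + \im(1-A_E^{n_k})^t$. This composite is surjective and satisfies $\pi(a) = a + \im(1-A_E^{n_k})^t + \tor_{n_k}$. I would then identify its kernel: an element $a \in \ZZ^{E^0}$ lies in $\ker \pi$ precisely when $q_{n_k}(a) \in \tor_{n_k}$, that is, when $a \in q_{n_k}^{-1}(\tor_{n_k})$. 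By the definition of $T_{n_k}$ recorded immediately before the statement, $q_{n_k}^{-1}(\tor_{n_k}) = T_{n_k}$, so $\ker \pi = T_{n_k}$. The first isomorphism theorem then supplies an isomorphism $\ZZ^{E^0}/T_{n_k} \cong \coker(1-A_E^{n_k})^t / \tor_{n_k}$ carrying $a + T_{n_k}$ to $a + \im(1-A_E^{n_k})^t + \tor_{n_k}$, and I would take $\tau$ to be its inverse, which then satisfies~\eqref{eq:torsion preimage} by construction.

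It remains only to replace $T_{n_k}$ by $T_l$ in the target. For each $k$ with $\gcd(\Pp_E, n_k) = l$, Proposition~\ref{torsioninvariant} gives $T_{n_k} = T_l$ on the nose, so $\ZZ^{E^0}/T_{n_k}$ and $\ZZ^{E^0}/T_l$ coincide as abelian groups and $\tau$ is the claimed isomorphism onto $\ZZ^{E^0}/T_l$. I do not anticipate any genuine obstacle here. The only point requiring a moment of care is verifying that $\ker \pi$ equals $T_{n_k}$ exactly, rather than a strictly larger subgroup, but this is immediate from the defining description $T_{n_k} = q_{n_k}^{-1}(\tor_{n_k})$.
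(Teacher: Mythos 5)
Your proposal is correct and takes essentially the same approach as the paper: both arguments rest entirely on Proposition~\ref{torsioninvariant} (the equality $T_{n_k} = T_l$) together with the identification $T_{n_k} = q_{n_k}^{-1}(\tor_{n_k})$. The only difference is organizational --- you package the quotient bookkeeping into the first isomorphism theorem applied to the composite $\ZZ^{E^0} \to \coker(1-A_E^{n_k})^t/\tor_{n_k}$, whereas the paper verifies well-definedness and injectivity of the same map element by element, invoking the two inclusions $T_{n_k} \subseteq T_l$ and $T_l \subseteq T_{n_k}$ separately.
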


\begin{proof}
To see that  the formula \eqref{eq:torsion preimage} is well-defined, suppose $(a + \im(1-A_E^{n_k})^t) + \tor_{n_k} = (b + \im(1-A_E^{n_k})^t) + \tor_{n_k}$, where $a, b \in \ZZ^{E^0}$. Then $a + \im(1-A_E^{n_k})^t = b + \im(1-A_E^{n_k})^t + t$, where $t \in \tor_{n_k}$, that is, $t = c + \im(1-A_E^{n_k})^t$ for some $c \in T_{n_k}$. Then $a - b - c \in \im(1-A_E^{n_k})^t \subseteq \im(1-A_E^{l})^t \subseteq T_l$.  By Proposition~\ref{torsioninvariant}, $c \in T_l$, so $a -b \in T_l$. So there is a map $\tau$ satisfying \eqref{eq:torsion preimage}.

The map $\tau$ is clearly a surjective group homomorphism. To see that it is injective, suppose $a + T_l = b + T_l$ for $a, b \in \ZZ^{E^0}$. We have $a = b + c$, for some $c \in T_l$, and hence $a + \im(1-A_E^{n_k})^t = b + c + \im(1-A_E^{n_k})^t$. So $a + \im(1-A_E^{n_k})^t = b + \im(1-A_E^{n_k})^t + c + \im(1-A_E^{n_k})^t$. By Proposition~\ref{torsioninvariant}, $c \in T_{n_k}$. Therefore $a + \im(1-A_E^{n_k})^t + \tor_{n_k} = b + \im(1-A_E^{n_k})^t + \tor_{n_k}$.
\end{proof}

\begin{cor}\label{cor:torsionfreeiso}
Let $E$ be a strongly connected finite directed graph. Suppose that $1$ is an eigenvalue of $A_E^t$ and that the only roots of unity that are eigenvalues of $A_E^t$ are the $\Pp_E$-th roots of unity. Let $\omega = (n_k)_{k=1}^\infty$ be a multiplicative sequence, let $l := \gcd(P_E, \omega)$. For each $k$ such that $\gcd(\Pp_E,n_k)=l$, there is an isomorphism $\theta_{n_k}: \coker(1-A_E^{n_k})^t / \tor_{n_k} \to \coker(1-A_E^{l})^t / \tor_{l}$ given by $\theta_{n_k} \big((a + \im(1-A_E^{n_k})^t) + \tor_{n_k} \big) = (a + \im(1-A_E^l)^t) + \tor_l$ for $a \in \ZZ^{E^0}$.
\end{cor}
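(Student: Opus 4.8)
The plan is to build $\theta_{n_k}$ by composing the isomorphism supplied by the preceding lemma with the canonical isomorphism attached to the level $l$ via the third isomorphism theorem. Write $\tau_{n_k}$ for the isomorphism of the preceding lemma, so that $\tau_{n_k}\colon \coker(1-A_E^{n_k})^t/\tor_{n_k} \to \ZZ^{E^0}/T_l$ satisfies $\tau_{n_k}(a + \im(1-A_E^{n_k})^t + \tor_{n_k}) = a + T_l$ for $a \in \ZZ^{E^0}$. Since the common codomain of these maps is the single group $\ZZ^{E^0}/T_l$, it suffices to identify $\coker(1-A_E^{l})^t/\tor_{l}$ with $\ZZ^{E^0}/T_l$ in a compatible way.

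First I would produce the isomorphism at level $l$. Unwinding the definitions recorded just before the preceding lemma, $T_l$ is a subgroup of $\ZZ^{E^0}$ containing $\im(1-A_E^l)^t$, and its image under the quotient map $q_l\colon \ZZ^{E^0} \to \coker(1-A_E^l)^t$ is precisely $\tor_l$; that is, $\tor_l = T_l/\im(1-A_E^l)^t$. The third isomorphism theorem then gives a canonical isomorphism
\[
\tau_l\colon \coker(1-A_E^l)^t/\tor_l = \big(\ZZ^{E^0}/\im(1-A_E^l)^t\big)\big/\big(T_l/\im(1-A_E^l)^t\big) \;\longrightarrow\; \ZZ^{E^0}/T_l,
\]
carrying $(a + \im(1-A_E^l)^t) + \tor_l$ to $a + T_l$ for every $a \in \ZZ^{E^0}$.

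Then I would set $\theta_{n_k} := \tau_l^{-1} \circ \tau_{n_k}$. Being a composite of isomorphisms, $\theta_{n_k}$ is an isomorphism, and for $a \in \ZZ^{E^0}$ it satisfies
\[
\theta_{n_k}\big((a + \im(1-A_E^{n_k})^t) + \tor_{n_k}\big) = \tau_l^{-1}(a + T_l) = (a + \im(1-A_E^l)^t) + \tor_l,
\]
which is exactly the asserted formula.

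I do not expect a genuine obstacle: all of the substantive work has already been carried out in Proposition~\ref{torsioninvariant}, whose equality $T_{n_k} = T_l$ (resting on the invertibility established in Lemma~\ref{cmkdet}) is what allows the preceding lemma to land every quotient $\coker(1-A_E^{n_k})^t/\tor_{n_k}$ in the single group $\ZZ^{E^0}/T_l$. What remains is formal. The one point I would check with care is the identity $\tor_l = T_l/\im(1-A_E^l)^t$ needed to invoke the third isomorphism theorem; this is immediate from the explicit description of the torsion subgroup of $\coker(1-A_E^l)^t$ together with the definition of $T_l$, both recorded immediately before the preceding lemma, so the eigenvalue hypotheses enter only through Proposition~\ref{torsioninvariant} and are inherited automatically.
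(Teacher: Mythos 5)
Your proposal is correct and follows essentially the same route as the paper: both compose the preceding lemma's isomorphism $\coker(1-A_E^{n_k})^t/\tor_{n_k} \to \ZZ^{E^0}/T_l$ with the canonical identification $\coker(1-A_E^l)^t/\tor_l \cong \ZZ^{E^0}/T_l$, which the paper states directly and you justify via the third isomorphism theorem using $\tor_l = T_l/\im(1-A_E^l)^t$. Your explicit verification of that last identity is a slight amplification of what the paper leaves implicit, but it is not a different argument.
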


\begin{proof}
Fix $k$ such that $\gcd(\Pp_E,n_k)=l$. The previous Lemma gives an isomorphism $\coker(1-A_E^{n_k})^t / \tor_{n_k} \to \ZZ^{E^0} / T_l$ satisfying
$(a + \im(1-A_E^{n_k})^t) + \tor_{n_k} \mapsto a + T_l,$ where $a \in \ZZ^{E^0}$. The result follows since
$\ZZ^{E^0} / T_l$ is isomorphic to $\coker(1-A_E^l)^t / \tor_l$ via $a + T_l \mapsto a + \im(1-A_E^l)^t + \tor_l$.
We take $\theta_{n_k}$ to be the composition of these isomorphisms.
\end{proof}

We give another description of the torsion-free abelian group $A \big[ \frac{1}{\omega} \big]$ of Lemma~\ref{adjoingroup}.

\begin{lem}\label{limad}
Let $A$ be a free abelian group and let $\omega = (n_k)_{k=1}^\infty$ be a multiplicative sequence, and let $m_k := n_{k+1}/n_k$ for all $k \in \NN$. Define maps $M_{k}: A \to A$ by $M_{k}(a) = m_k \cdot a$, and let $M_{k,\infty}$ be the natural map $A \to \varinjlim(A, M_k)$. There is an isomorphism $\phi: \varinjlim (A, M_{k}) \cong A \big[ \frac{1}{\omega} \big]$ satisfying $\phi(M_{k,\infty}(a)) = a/n_k$, for each $k \in \NN$ and $a \in A$.
\end{lem}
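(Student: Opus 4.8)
The plan is to obtain $\phi$ from the universal property of $\varinjlim(A, M_k)$ and then produce an explicit two-sided inverse. First I would define, for each $k \in \NN$, the homomorphism $\phi_k \colon A \to A[\frac{1}{\omega}]$ by $\phi_k(a) = a/n_k$ (that is, $\phi_k(a) = [(a,k)]$ in the notation of Lemma~\ref{adjoingroup}). The key compatibility to check is $\phi_{k+1} \circ M_k = \phi_k$: since $M_k(a) = m_k a$ with $m_k = n_{k+1}/n_k$, this reduces to showing $[(m_k a, k+1)] = [(a,k)]$, which follows at once from the defining relation $\sim$, because $n_k \mid n_{k+1}$ gives $\max\{n_k, n_{k+1}\} = n_{k+1}$ and hence both sides of that relation equal $m_k a$. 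With this compatibility in hand, the universal property furnishes a unique homomorphism $\phi \colon \varinjlim(A, M_k) \to A[\frac{1}{\omega}]$ satisfying $\phi(M_{k,\infty}(a)) = a/n_k$.

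Next I would construct a candidate inverse $\rho \colon A[\frac{1}{\omega}] \to \varinjlim(A, M_k)$ by $\rho([(a,j)]) = M_{j,\infty}(a)$. The step I expect to be the main obstacle is verifying that $\rho$ is well defined against the equivalence relation $\sim$: one must match the identifications imposed by $\sim$ with those already present inside the direct limit. Concretely, if $(a,j) \sim (a',j')$ with $j' \ge j$, then $a' = (n_{j'}/n_j)\, a$, while the composite connecting map $M_{j,j'} := M_{j'-1} \circ \cdots \circ M_j$ is multiplication by the telescoping product $\prod_{i=j}^{j'-1} m_i = \prod_{i=j}^{j'-1} (n_{i+1}/n_i) = n_{j'}/n_j$. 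Hence $M_{j',\infty}(a') = M_{j',\infty}(M_{j,j'}(a)) = M_{j,\infty}(a)$, so $\rho$ respects $\sim$.

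Finally I would check that $\rho$ is a group homomorphism and that $\phi$ and $\rho$ are mutually inverse. For the homomorphism property I would apply the addition formula of Lemma~\ref{adjoingroup}: for $j' \ge j$ the sum is $[(a,j)] + [(a',j')] = [((n_{j'}/n_j)a + a', j')]$, and splitting $M_{j',\infty}((n_{j'}/n_j) a + a')$ together with the telescoping identity above recovers $\rho([(a,j)]) + \rho([(a',j')])$. The two composites are then immediate on generators: $\phi(\rho([(a,j)])) = \phi(M_{j,\infty}(a)) = a/n_j = [(a,j)]$, and since every element of $\varinjlim(A,M_k)$ has the form $M_{k,\infty}(a)$, we get $\rho(\phi(M_{k,\infty}(a))) = \rho(a/n_k) = M_{k,\infty}(a)$. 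This shows $\phi$ is an isomorphism with the stated formula. Alternatively, one could invoke the Remark following Lemma~\ref{adjoingroup} identifying $A[\frac{1}{\omega}] \cong A \otimes \ZZ[\frac{1}{\omega}]$ and use that tensoring commutes with direct limits, but the explicit inverse keeps the argument self-contained.
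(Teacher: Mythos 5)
Your proposal is correct and follows essentially the same route as the paper: both obtain $\phi$ from the universal property of $\varinjlim(A,M_k)$ applied to the compatible maps $a \mapsto a/n_k$, via the same telescoping computation $(m_k \cdot a)/n_{k+1} = a/n_k$. The only difference is in how bijectivity is verified --- the paper checks injectivity and surjectivity directly on elements $M_{k,\infty}(a)$, while you construct the explicit inverse $\rho([(a,j)]) = M_{j,\infty}(a)$ and check it is well defined against $\sim$; both verifications are immediate and equivalent in substance.
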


\begin{proof}
Fix $k \in \NN$. Define $j_{k,\infty}: A \to A \big[ \frac{1}{\omega} \big]$ by $j_{k,\infty}(a) = a/n_k$ for $a \in \ZZ$. This $j_{k,\infty}$ is a homomorphism by definition of the operation on $A \big[ \frac{1}{\omega} \big]$. We calculate $j_{k+1, \infty} (M_k(a)) = (m_k \cdot a) / n_{k+1} = (n_{k+1}/n_k) \cdot (a/n_{k+1}) = a / n_k = j_{k,\infty}(a)$. So the universal property of $\varinjlim (A, M_k)$ induces a homomorphism $\phi$ satisfying the desired formula. It remains to check that $\phi$ is an isomorphism. To see that $\phi$ is injective, fix $a \in A$ such that $\phi(M_{k,\infty}(a)) = 0$. Then $a/n_k = 0$, so $a = 0$. To see that $\phi$ is surjective, fix $a/n_k \in A \Big[ \frac{1}{\omega} \Big]$. Then $\phi(M_{k,\infty}(a)) = a/n_k$.
\end{proof}

\begin{prp}\label{mult} Let $E$ be a strongly connected finite directed graph. Suppose that $1$ is an eigenvalue of $A_E^t$ and that the only roots of unity that are eigenvalues of $A_E^t$ are the $\Pp_E$-th roots of unity. Let $\omega = (n_k)_{k=1}^\infty$ be a mulitiplicative sequence, and let $l := \gcd(P_E, \omega)$. Fix $K$ such that $\gcd(P_E, n_K) = l$, and define $\omega' := (n_k')_{k=1}^\infty$ where $n_1' = l$ and $n_k' = n_{K+k-1}$ for $k \ge 2$. For each $k \ge 1$, the map $\phi_{n_k',n_{k+1}'}$ descends to a map $\tilde \phi_{n_k',n_{k+1}'}$ such that the following diagram commutes.

\begin{center} \begin{tikzcd}
\coker(1-A_E^{n_k'})^t / \tor_{n_k'} \arrow{d}{\theta_{n_k'}} \arrow{r}{\tilde \phi_{n_k',n_{k+1}'}}
& \coker(1-A_E^{n_{k+1}'})^t / \tor_{n_{k+1}'} \arrow{d}{\theta_{n_{k+1}'}} \\
\coker(1-A_E^l)^t / \tor_{l} \arrow{r}{M_k'}
& \coker(1-A_E^l)^t / \tor_{l}
\end{tikzcd} \end{center}

\end{prp}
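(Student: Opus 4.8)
The plan is to establish commutativity on the generators $\delta_v + \im(1-A_E^{n_k'})^t + \tor_{n_k'}$ with $v \in E^0$, since these generate $\coker(1-A_E^{n_k'})^t/\tor_{n_k'}$. First I would record two preliminaries. Each vertical map $\theta_{n_k'}$ is supplied by Corollary~\ref{cor:torsionfreeiso}, which applies because $\gcd(\Pp_E,n_k') = l$ for every $k \ge 1$: for $k=1$ this is $\gcd(\Pp_E,l) = l$ since $l \mid \Pp_E$, and for $k \ge 2$ we have $n_k' = n_{K+k-1}$ with $K+k-1 \ge K$, where the nondecreasing sequence $(\gcd(\Pp_E,n_j))_j$ has already reached its limit $l$. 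The descent of $\phi_{n_k',n_{k+1}'}$ to a map $\tilde\phi_{n_k',n_{k+1}'}$ on torsion-free quotients is immediate from the first paragraph of Lemma~\ref{limtor}, since any group homomorphism carries torsion into torsion.

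Next I would chase a generator $\delta_v + \im(1-A_E^{n_k'})^t + \tor_{n_k'}$ around both sides of the square, writing $m_k' := n_{k+1}'/n_k'$ so that $M_k'$ is multiplication by $m_k'$. Down then right gives $m_k'\,\delta_v + \im(1-A_E^l)^t + \tor_l$. For the other route I would use the earlier formula $\phi_{n_k',n_{k+1}'}(\delta_v + \im(1-A_E^{n_k'})^t) = \sum_{\mu\in vE^{<n_{k+1}'},\,|\mu|\in n_k'\NN}\delta_{s(\mu)} + \im(1-A_E^{n_{k+1}'})^t$. The indexing paths have lengths $in_k'$ with $0 \le i \le m_k'-1$, and grouping by length together with $\sum_{\mu\in vE^{in_k'}}\delta_{s(\mu)} = (A_E^{in_k'})^t\delta_v$ rewrites this representative as $\big(\sum_{i=0}^{m_k'-1}(A_E^{in_k'})^t\big)\delta_v$; applying $\theta_{n_{k+1}'}$ then yields $\big(\sum_{i=0}^{m_k'-1}(A_E^{in_k'})^t\big)\delta_v + \im(1-A_E^l)^t + \tor_l$.

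Finally I would show that the two representatives agree modulo $T_l$, that is, that $\sum_{i=0}^{m_k'-1}\big((A_E^{in_k'})^t - 1\big)\delta_v \in T_l$. Using $l \mid n_k'$ I would factor $1 - (A_E^t)^{in_k'} = (1-A_E^l)^t\sum_{j=0}^{in_k'/l-1}(A_E^{jl})^t$, so that each summand $\big((A_E^{in_k'})^t-1\big)\delta_v$ lies in $\im(1-A_E^l)^t \subseteq T_l$; hence the whole sum lies in $T_l$ and vanishes in $\coker(1-A_E^l)^t/\tor_l$. This establishes the claim on generators and hence in general. The one point requiring care is this last step: the powers $(A_E^{in_k'})^t\delta_v$ collapse to $\delta_v$ modulo $\im(1-A_E^l)^t$ (equivalently modulo $T_l$), but not modulo the smaller subgroup $\im(1-A_E^{n_k'})^t$, and it is precisely this collapse of all $m_k'$ summands to $\delta_v$ that produces the multiplication-by-$m_k'$ map in the limit.
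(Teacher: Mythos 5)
Your proof is correct, and its skeleton is the same as the paper's: both obtain $\tilde\phi_{n_k',n_{k+1}'}$ from the first assertion of Lemma~\ref{limtor}, both evaluate $\phi_{n_k',n_{k+1}'}$ on a representative via the formula $\sum_{\mu \in vE^{<n_{k+1}'},\,|\mu|\in n_k'\NN}\delta_{s(\mu)}$ and regroup it as $\big(\sum_{i=0}^{m_k'-1}(A_E^{in_k'})^t\big)$ applied to that representative, and both finish by showing the discrepancy $\sum_{i=0}^{m_k'-1}\big((A_E^{in_k'})^t-1\big)$ applied to the representative is negligible in $\coker(1-A_E^l)^t/\tor_l$. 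The genuine difference is how that last step is justified. The paper factors $(A_E^{in_k'}-1)^t = (A_E^{n_k'}-1)^t\sum_{j=0}^{i-1}(A_E^{jn_k'})^t$, placing the discrepancy in $\im(1-A_E^{n_k'})^t \subseteq T_{n_k'}$, and then invokes Proposition~\ref{torsioninvariant} to conclude $T_{n_k'} = T_l$; this is precisely where the spectral hypothesis (via the invertibility result Lemma~\ref{cmkdet}) enters the paper's computation. You instead use $l \mid n_k'$ to factor $1-(A_E^t)^{in_k'} = (1-A_E^l)^t\sum_{j=0}^{in_k'/l-1}(A_E^{jl})^t$, placing each summand of the discrepancy directly in $\im(1-A_E^l)^t$. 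Your route is more elementary at this point (no appeal to Proposition~\ref{torsioninvariant}) and yields slightly more: the discrepancy vanishes already in $\coker(1-A_E^l)^t$, not merely modulo torsion. The spectral hypotheses are still needed in your argument, but only to guarantee that the vertical maps $\theta_{n_k'}$ of Corollary~\ref{cor:torsionfreeiso} exist, and your verification that $\gcd(\Pp_E,n_k')=l$ for all $k \ge 1$ (including $k=1$, since $l \mid \Pp_E$) correctly fills in a point the paper leaves implicit. Working on the generators $\delta_v$ rather than on arbitrary $x \in \ZZ^{E^0}$, as the paper does, is an immaterial difference.
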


\begin{proof}
Fix $k \ge 1$. Applying the first assertion of Lemma~\ref{limtor} we see that $\phi_{n_k',n_{k+1}'}$ descends to a homomorphism $\tilde \phi_{n_k',n_{k+1}'}:
\coker(1-A_E^{n_k'})^t / \tor_{n_k'} \to \coker(1-A_E^{n_{k+1}'})^t / \tor_{n_{k+1}'},$ satisfying $\tilde \phi_{n_k',n_{k+1}'}(g + \tor_{n_k'}) = \phi_{n_k',n_{k+1}'}(g) + \tor_{n_{k+1}'}$.

Define $B_k := \sum_{i=0}^{m_k' -1} (A_E^{i n_k'} -1)^t$. Note that $B_k + m_k' 1 = \sum_{i = 0}^{m_k'-1} (A_E^{i n_k'})^t$. We have that $(A_E^{in_k'}-1)^t = (A_E^{n_k'}-1)^t (\sum_{j=0}^{i-1} (A_E^{j n_k'})^t)$, so $\im B_k \subseteq \im(1-A_E^{n_k'})^t \subseteq T_{n_k'} = T_l$ by Lemma~\ref{torsioninvariant}. Thus $\im B_k + \im(1-A_E^l)^t \subseteq \tor_l$.

Fix $x \in \ZZ^{E^0}$. By the preceding paragraph, we have \begin{align*}
\theta_{n_{k+1}'} \big(\tilde \phi_{n_k',n_{k+1}'} \big(x + \im(1-A_E^{n_k'})^t + \tor_{n_k'}\big) \big)
&= \theta_{n_{k+1}'} \big(\phi_{n_k',n_{k+1}'}\big(x + \im(1-A_E^{n_k'})^t\big) + \tor_{n_{k+1}'} \big) \\
&= \sum_{i = 0}^{m_k'-1} (A_E^{i n_k'})^t x + \im(1 - A_E^{l})^t + \tor_{l}\\
& = (B_k + m_k' 1)(x) + \im(1 - A_E^l)^t + \tor_l \\
&= (m_k' 1)(x) + \im(1-A_E^l)^t + \tor_l \\
&= M_k'(x + \im(1-A_E^l)^t + \tor_l) \\
&= (M_k' \circ \theta_{n_k'})(x + \im(1-A_E^{n_k'})^t + \tor_{n_k'}). \qedhere \end{align*} \end{proof}

Recall the isomorphism $\rho: \coker(1-A_E^l)^t  \to \bigoplus_{i=1}^l \coker(1-A_E^t)$ of Lemma~\ref{C*(E(l))1} satisfying \[ \rho \big( \delta_v+\im(1-A_E^l)^t \big) = \big(0, \dots, \delta_v+\im(1-A_E^t), \dots, 0 \big),\] where $v \in \Lambda_j$ for some $0 \le j \le l -1$, and $\delta_v + \im(1-A_E^t)$ appears in the $j$-th position.

\begin{lem}\label{C*(E(l))2}
Let $E$ be a strongly connected finite directed graph. Suppose that $1$ is an eigenvalue of $A_E^t$ and that the only roots of unity that are eigenvalues of $A_E^t$ are the $\Pp_E$-th roots of unity. Let $\omega = (n_k)_{k=1}^\infty$ be a multiplicative sequence, and let $l = \gcd(P_E, \omega)$. There is an isomorphism
$\psi: K_0(C^*(E(l))) \to \bigoplus_{i=1}^l K_0(C^*(E))$ such that the following diagram commutes.

\begin{center}
\begin{tikzcd}
\coker(1-A_E^l)^t \arrow{d}{\sigma_l \circ \psi_l} \arrow{r}{\rho}
& \bigoplus_{i=1}^l \arrow{d}{\bigoplus_{i=1}^l \sigma_1} \coker(1-A_E^t) \\
K_0(C^*(E(l)))  \arrow{r}{\psi}
& \bigoplus_{i=1}^l K_0(C^*(E))
\end{tikzcd}
\end{center} Moreover, $\psi \Big(\sum_{\mu \in E^{<l}} [p_{s(\mu),l}]_0 \Big) =
([1_{C^*(E)}]_0, \dots, [1_{C^*(E)}]_0)$.
\end{lem}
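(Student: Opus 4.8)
The plan is to define $\psi$ as the composite that the square forces, namely
\[ \psi := \Big(\bigoplus_{i=1}^l \sigma_1\Big) \circ \rho \circ (\sigma_l \circ \psi_l)^{-1}. \]
Here $\sigma_l \circ \psi_l \colon \coker(1-A_E^l)^t \to K_0(C^*(E(l)))$ is the isomorphism of Remark~\ref{raeburnsktheory} (taken with $n = l$), $\rho$ is the isomorphism of Corollary~\ref{C*(E(l))1}, and each $\sigma_1 \colon \coker(1-A_E^t) \to K_0(C^*(E))$ is the isomorphism of \cite[Theorem~7.16]{Raeburn:Graphalgebras05} satisfying $\sigma_1(\delta_v + \im(1-A_E^t)) = [p_v]_0$ (recall $E(1) \cong E$, so $\psi_1 = \id$). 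Being a composite of three isomorphisms, $\psi$ is an isomorphism, and the defining formula rearranges immediately to $\psi \circ (\sigma_l \circ \psi_l) = \big(\bigoplus_{i=1}^l \sigma_1\big) \circ \rho$, which is exactly the commutativity of the square. So the only real content is the moreover clause.

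For that clause I would first transport the computation back to $\coker(1-A_E^l)^t$. Since $s(\mu) \in E^0$ for every $\mu \in E^{<l}$, Remark~\ref{raeburnsktheory} gives $[p_{s(\mu),l}]_0 = (\sigma_l \circ \psi_l)(\delta_{s(\mu)} + \im(1-A_E^l)^t)$, so by linearity $(\sigma_l\circ\psi_l)^{-1}$ carries $\sum_{\mu\in E^{<l}}[p_{s(\mu),l}]_0$ to $\sum_{\mu \in E^{<l}}\delta_{s(\mu)} + \im(1-A_E^l)^t$. Grouping the paths by range and length, and using $\sum_{\mu\in vE^n}\delta_{s(\mu)} = (A_E^n)^t\delta_v$, this representative becomes $\sum_{n=0}^{l-1}\sum_{v\in E^0}(A_E^n)^t\delta_v$. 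It then remains to evaluate $\big(\bigoplus_i \sigma_1\big)\circ\rho$ on this class.

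Finally I would feed this representative through $\rho$ slot by slot. The cyclic decomposition underlying Lemmas~\ref{dirsuml} and~\ref{isovarup} says that $s(\mu)\in\Lambda_{i+n}$ whenever $\mu\in E^n$ and $r(\mu)\in\Lambda_i$; equivalently $(A_E^n)^t\delta_v \in \ZZ^{\Lambda_{i+n}}$ for $v\in\Lambda_i$. Hence the term $(A_E^n)^t\delta_v$ with $v\in\Lambda_i$ is sent by $\rho$ into the slot indexed $i+n \pmod l$, so the $j$-th slot of $\rho$ applied to the representative above is $\sum_{n=0}^{l-1}\sum_{v\in\Lambda_{j-n}}(A_E^n)^t\delta_v + \im(1-A_E^t)$. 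In $\coker(1-A_E^t)$ we have $(A_E^n)^t\delta_v \equiv \delta_v$, since $\delta_v - (A_E^n)^t\delta_v = (1-A_E^t)\big(\sum_{i=0}^{n-1}(A_E^i)^t\big)\delta_v \in \im(1-A_E^t)$ (the iterated form of Lemma~\ref{identitycokernel}); and as $n$ runs over $0,\dots,l-1$ the classes $\Lambda_{j-n}$ run over all of $E^0/\sim_l$. Thus every slot reduces to $\sum_{v\in E^0}\delta_v + \im(1-A_E^t)$, and applying $\sigma_1$ slotwise turns this into $\sum_{v\in E^0}[p_v]_0 = [1_{C^*(E)}]_0$, yielding the claimed value $([1_{C^*(E)}]_0,\dots,[1_{C^*(E)}]_0)$. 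I expect the main obstacle to be precisely this slot-tracking: one must invoke the cyclic class structure to determine where $\rho$ distributes each term, and one must reduce modulo $\im(1-A_E^t)$ only \emph{after} applying $\rho$ (in the target), since $\rho$ itself is defined relative to $\im(1-A_E^l)^t$ and mixing the two relations would be fatal.
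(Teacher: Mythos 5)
Your proposal is correct and follows essentially the same route as the paper: the paper likewise defines $\psi := \big(\bigoplus_{i=1}^l \sigma_1\big) \circ \rho \circ (\sigma_l \circ \psi_l)^{-1}$ so that the square commutes by construction, and proves the unit formula by pulling $\sum_{\mu \in E^{<l}}[p_{s(\mu),l}]_0$ back to $\sum_{v \in E^0}\sum_{j=0}^{l-1}(A_E^j)^t\delta_v + \im(1-A_E^l)^t$, tracking each term $(A_E^j)^t\delta_v$ into the appropriate slot via the cyclic class structure, and reducing it to $\delta_v$ there by (iterating) Lemma~\ref{identitycokernel}. The only difference is bookkeeping order: the paper fixes $v \in \Lambda_i$ and computes all $l$ slots at once, whereas you fix a slot and sum over vertices.
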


\begin{proof}

We define $\psi := (\bigoplus_{i=1}^l \sigma_1) \circ \rho \circ (\sigma_l \circ \psi_l)^{-1}$. Since $\rho, \sigma_l \circ \psi_l$, and $\sigma_1$ are all isomorphisms, so is $\psi$.

We now show that $\psi$ satisfies the second statement. Fix $0 \le i \le l -1$, and $v \in \Lambda_i$. Using Lemma~\ref{identitycokernel} at the second equality, we have
\begin{align*} \rho \Big(\sum_{j=0}^{l-1} (A_E^j)^t \delta_v + \im(1-A_E^l)^t \Big)
&= \Big((A_E^{l-i})^t  \delta_v + \im(1-A_E^t), \dots, (A_E^{l-i-1})^t \delta_v + \im(1-A_E^t) \Big) \\
&= \Big( \delta_v + \im(1-A_E^t), \dots, \delta_v + \im(1-A_E^t) \Big). \end{align*}

Hence,
\begin{align*} \psi \Big(\sum_{\mu \in E^{<l}} [p_{s(\mu),l}]_0 \Big)
&= \Big(\Big(\bigoplus_{i=1}^l \sigma_1\Big) \circ \rho \circ (\sigma_l \circ \psi_l)^{-1}\Big) \Big(\sum_{\mu \in E^{<l}} [p_{s(\mu),l}]_0\Big) \\
&= \Big(\Big(\bigoplus_{i=1}^l \sigma_1\Big) \circ \rho \Big) \Big( \sum_{\mu \in E^{<l}} \delta_{s(\mu)}+\im(1-A_E^l)^t \Big) \Big) \\
&= \Big(\Big(\bigoplus_{i=1}^l \sigma_1\Big) \circ \rho \Big) \Big( \sum_{v \in E^0} \sum_{j=0}^{l-1} (A_E^j)^t \delta_v + \im(1-A_E^l)^t \Big) \\
&= \Big(\bigoplus_{i=1}^l \sigma_1\Big) \Big(\sum_{v \in E^0} \delta_v+\im(1-A_E^t), \dots, \sum_{v \in E^0} \delta_v+\im(1-A_E^t) \Big) \\
&= ([1_{C^*(E)}]_0, \dots, [1_{C^*(E)}]_0). \qedhere
\end{align*} \end{proof}

\begin{proof}[Proof of Theorem~\ref{supchar}]

Fix $K$ such that $\gcd(P_E, n_K) = \gcd(P_E, \omega)$, and let $\omega' = (n_k')_{k=1}^\infty$ where
$n_1' = l$ and $n_k' = n_{K+k-1}$ for $k \ge 2$. Let $m_k'= n_{k+1}' / n_k'$. Since $[\omega] = [\omega']$,
we have a unital isomorphism $C^*(E,\omega) \cong C^*(E, \omega')$ by \cite[Proposition~3.11]{RobertsonRoutSims}. Hence
\[ \big(K_0(C^*(E,\omega)),[1_{C^*(E,\omega)}] \big) \cong \big( K_0(C^*(E, \omega')), [1_{C^*(E,\omega')}] \big).\]
So it suffices to prove the theorem for $\omega'$.

Let $\tor_{\omega'} := \tor \big( \varinjlim \big(K_0(C^*(E(n_k'))), K_0(j_{n_k',n_{k+1}'}) \big) \big)$.
By \cite[Theorem~6.3.2]{Rordam2000} there is an isomorphism \[ K_0(C^*(E, \omega')) \cong \varinjlim
\Big(K_0(C^*(E(n_k'))), K_0(j_{n_k',n_{k+1}'}) \Big) \] satisfying \[ [1_{C^*(E,\omega')}] \mapsto
K_0(j_{n_1', \infty}) \Big(\sum_{\mu \in E^{<n_1'}} [p_{\mu,n_1'}]_0 \Big). \] This isomorphism descends to an
isomorphism  \[ K_0(C^*(E, \omega')) / \tor_{(E,\omega')} \cong \varinjlim \Big(K_0(C^*(E(n_k'))),
K_0(j_{n_k',n_{k+1}'}) \Big) / \tor_{\omega'} \] satisfying \[ [1_{C^*(E,\omega')}]_0 + \tor_{(E,\omega')} \mapsto 
K_0(j_{n_1', \infty}) \Big(\sum_{\mu \in E^{<n_1'}} [p_{\mu,n_1'}]_0 \Big) + \tor_{\omega'}.\]

Let $x := \sum_{\mu \in E^{<n_1'}} \delta_{s(\mu)} \in \ZZ^{E^0}$, and let $\tor_\infty :=
\tor \big( \varinjlim \big(\coker(1- A_E^{n_k'})^t, \phi_{n_k',n_{k+1}'} \big) \big)$. The isomorphisms $(\sigma_{n_k'} \circ \psi_{n_k'})^{-1}$
discussed in Remark~\ref{raeburnsktheory} induce an isomorphism \[ \varinjlim \big(K_0(C^*(E(n_k'))),
K_0(j_{n_k',n_{k+1}'}) \big) / \tor_{\omega'} \cong \varinjlim (\coker(1- A_E^{n_k'})^t, \phi_{n_k',n_{k+1}'}) / \tor_\infty \]
satisfying \begin{align*} K_0(j_{n_1', \infty}) \Big(\sum_{\mu \in E^{<n_1'}} [p_{\mu,n_1'}]_0 \Big) + \tor_{\omega'} 
& \mapsto \phi_{n_1',\infty} \Big( (\sigma_{n_1'} \circ \psi_{n_1'})^{-1}\Big(\sum_{\mu \in E^{<n_1'}} [p_{\mu,n_1'}]_0 \Big) \Big) + \tor_\infty \\
&= \phi_{n_1',\infty}(x + \im(1-A_E^{n_1'})^t) +\tor_\infty. \end{align*}

By Lemma~\ref{limtor} there is an isomorphism \[ \varinjlim (\coker(1- A_E^{n_k'})^t, \phi_{n_k',n_{k+1}'}) / \tor_\infty \cong \varinjlim
(\coker(1- A_E^{n_k'})^t / \tor_{n_k'}, \tilde \phi_{n_k',n_{k+1}'}) \] satisfying $\phi_{n_1',\infty}(x + \im(1-A_E^{n_1'})^t) + \tor_\infty
\mapsto \tilde \phi_{n_1',\infty}( x + \im(1-A_E^{n_1'})^t + \tor_{n_1'})$.

By Proposition~\ref{mult} there is an isomorphism \[ \varinjlim (\coker(1- A_E^{n_k'})^t / \tor_{n_k'}, \tilde \phi_{n_k',n_{k+1}'})
\cong \varinjlim (\coker(1- A_E^l)^t / \tor_{l}, M_{n_k'} ) \] satisfying $\tilde \phi_{n_1', \infty}(x + \im(1-A_E^{n_1'})^t +
\tor_{n_1'}) \mapsto M_{n_1',\infty}(x + \im(1-A_E^{l})^t+\tor_{l}).$

By Lemma~\ref{limad} there is an isomorphism \[ \varinjlim (\coker(1- A_E^l)^t / \tor_{l}, M_{n_k'} )
\cong \big(\coker(1- A_E^l)^t / \tor_{l} \big) \Big[\frac{1}{\omega'} \Big] \] satisfying $m_{n_1',\infty}
(x + \im(1-A_E^{l})^t +\tor_{l}) \mapsto (x + \im(1-A_E^{l})^t +\tor_{l}) /n_1'$.

The isomorphism $\eta_l := \sigma_l \circ \psi_l: \coker(1- A_E^l)^t \to K_0(C^*(E(l)))$ of Remark~\ref{raeburnsktheory} descends
to an isomorphism $\tilde \eta_l: \coker(1- A_E^l)^t /\tor_l \to K_0(C^*(E(l))) / \tor_{E(l)}$.
This $\tilde \eta_l$ induces an isomorphism \[ \big(\coker(1- A_E^l)^t / \tor_{l} \big)
\Big[\frac{1}{\omega'} \Big] \cong \big( K_0(C^*(E(l))) / \tor_{E(l)} \big) \Big[\frac{1}{\omega'} \Big], \]
satisfying \begin{align*} \big(x + \im(1-A_E^{l})^t +\tor_{l}\big) /n_1' & \mapsto \tilde \eta_l
(x + \im(1-A_E^{l})^t +\tor_{l}) /n_1' \\ &= \Big(\sum_{\mu \in E^{<l}} [p_{s(\mu), l}]_0 +\tor_{E(l)} \Big) /n_1'. \end{align*}

The isomorphism of Lemma~\ref{C*(E(l))2} descends to an isomorphism
$K_0(C^*(E(l))) / \tor_{E(l)} \to \bigoplus_{i=1}^l K_0(C^*(E)) / \tor_E,$ and this induces
an isomorphism \[ \big( K_0(C^*(E(l))) / \tor_{E(l)} \big)
\Big[\frac{1}{\omega'} \Big] \cong \Big(\bigoplus_{i=1}^l K_0(C^*(E))  / \tor_E \Big) \Big[\frac{1}{\omega'} \Big], \]
satisfying \begin{align*} \Big(\sum_{\mu \in E^{<l}} [p_{s(\mu), l}]_0 +\tor_{E(l)} \Big) /n_1'
&\mapsto \tilde \psi \Big(\sum_{\mu \in E^{<l}} [p_{s(\mu), l}]_0 +\tor_{E(l)} \Big) /n_1' \\
& = ([1_{C^*(E)}]_0 + \tor_E, \dots, [1_{C^*(E)}]_0+ \tor_E) / l, \end{align*} since $n_1'=l$.

Composing the isomorphisms of the previous seven paragraphs gives an isomorphism \[ \Psi:
K_0(C^*(E,\omega'))/ \tor_{(E,\omega')} \to \bigoplus_{i=1}^l \big(K_0(C^*(E)) / \tor_{E} \big) \Big[ \frac{1} {\omega' } \Big] \]
satisfying $\Psi([1_{C^*(E,\omega')}]) = ([1_{C^*(E)}]_0 + \tor_E, \dots, [1_{C^*(E)}]_0+\tor_E) / l$. \end{proof}

\begin{rmk}
In the proof of Theorem~\ref{supchar}, we needed to apply Lemma~\ref{C*(E(l))2} to relate the torsion-free component of $K_0(C^*(E(l)))$ back to the torsion-free component of $K_0(C^*(E))$. This uses Corollary~\ref{C*(E(l))1}, which requires Lemma~\ref{isovarup}, where it is crucial that the power of $A_E^t$ in the term $(1-A_E^l)^t$ matches the number of equivalence classes for the equivalence relation $\sim_l$. We also needed to apply Corollary~\ref{cor:torsionfreeiso} to obtain an isomorphism between the torsion-free component of $K_0(C^*(E(l)))$ and the torsion-free component of $K_0(C^*(E(n_k)))$ for all $k$ such that $\gcd(\Pp_E, n_k) = l$. This uses Lemma~\ref{torsioninvariant} which depends on Lemma~\ref{cmkdet} explaining why we require that the only roots of unity that are eigenvalues of $A_E^t$ are the $\Pp_E$-th roots of unity.
\end{rmk}

\section{Classification of $C^*(E,\omega)$}\label{sec:class}

In this section we use Theorem~\ref{supchar} to prove the following isomorphism theorem.

 \begin{thm}\label{thm:classification}
Fix a strongly connected finite directed graph $E$.
Let $\omega = (n_k)_{k=1}^\infty$ and $\omega' = (n_k')_{k=1}^\infty$ be multiplicative sequences.
Suppose $1$ is an eigenvalue of $A_E^t$ and that the only roots of unity that are eigenvalues of $A_E^t$ are the
$\Pp_E$-th roots of unity. Then $C^*(E,\omega)
\cong C^*(E,\omega')$ if and only if $[\omega] = [\omega']$.
\end{thm}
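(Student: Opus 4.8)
The plan is to prove the two implications separately. The reverse implication is immediate from \cite[Proposition~3.11]{RobertsonRoutSims}, which produces a unital isomorphism $C^*(E,\omega)\cong C^*(E,\omega')$ whenever $[\omega]=[\omega']$. All of the content is therefore in the forward implication, and the strategy is to show that the torsion-free component of $K_0$ together with the class of the unit is a complete invariant for $[\omega]$.

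First I would note that any $*$-isomorphism $C^*(E,\omega)\cong C^*(E,\omega')$ is automatically unital, both algebras being direct limits of the unital algebras $C^*(E(n_k))$ along the unital connecting maps $\tilde j_{n_k,n_{k+1}}$; it therefore induces an isomorphism of $K_0$-groups carrying $[1_{C^*(E,\omega)}]_0$ to $[1_{C^*(E,\omega')}]_0$. Writing $A:=K_0(C^*(E))/\tor_E$ and passing to quotients by torsion, Theorem~\ref{supchar} turns this into a group isomorphism
\[ \Phi:\ \bigoplus_{i=1}^{l} A\Big[\tfrac1\omega\Big]\ \longrightarrow\ \bigoplus_{i=1}^{l'} A\Big[\tfrac1{\omega'}\Big], \qquad l=\gcd(\Pp_E,\omega),\ \ l'=\gcd(\Pp_E,\omega'), \]
which sends the distinguished unit $u=(a_0,\dots,a_0)/l$ to $u'=(a_0,\dots,a_0)/l'$, where $a_0:=[1_{C^*(E)}]_0+\tor_E$ (see the final line of the proof of Theorem~\ref{supchar}). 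Set $G:=\bigoplus_{i=1}^l A[\tfrac1\omega]$ and $G':=\bigoplus_{i=1}^{l'} A[\tfrac1{\omega'}]$. Recovering $[\omega]$ from the pointed group $(G,u)$ is exactly the task of the recovery result (Theorem~\ref{omegarecovery}).

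To carry out the recovery I would extract two invariants of $(G,u)$ preserved by $\Phi$. Write $r:=\rank A=\rank\coker(1-A_E^t)$, which is nonzero by the eigenvalue-$1$ hypothesis, and for each prime $p$ let $e_p:=\sup_k v_p(n_k)\in\{0,1,\dots,\infty\}$ be the $p$-exponent of $[\omega]$, where $v_p$ is the $p$-adic valuation. By Lemma~\ref{adjoingroup} we have $\rank G=lr$, so $\Phi$ forces $lr=l'r$ and hence $l=l'$, giving $v_p(l)=v_p(l')$ for all $p$. Next, for each $p$ the divisibility $d_p:=\sup\{k:u\in p^kG\}$ is an isomorphism invariant of $(G,u)$; computing in $A[\tfrac1\omega]\cong A\otimes\ZZ[\tfrac1\omega]\cong(\ZZ[\tfrac1\omega])^r$ (the identification noted after Lemma~\ref{adjoingroup}) shows that $d_p=\infty$ precisely when $e_p=\infty$, and that otherwise $d_p=v_p(g)-v_p(l)+e_p$, where $g$ is the content (the gcd of the coordinates) of $a_0$ in the free abelian group $A$. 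Since $E$, and hence $g$, is common to $\omega$ and $\omega'$, equating $d_p$ across $\Phi$ and cancelling the common terms $v_p(g)$ and $v_p(l)=v_p(l')$ gives $e_p=e_p'$ for every prime $p$, that is $[\omega]=[\omega']$.

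I expect the main obstacle to be the hypothesis-sensitive core of the computation above: the recovery collapses unless $a_0=[1_{C^*(E)}]_0+\tor_E$ is nonzero, equivalently $g$ finite, equivalently $\sum_{v\in E^0}\delta_v\notin\im(1-A_E^t)\otimes\QQ$. Without this the unit would be infinitely divisible at every prime and could detect nothing beyond $l$, and the theorem would in fact be false; for instance a strongly connected graph having both $1$ and $-1$ as eigenvalues of $A_E^t$ can have $[1_{C^*(E)}]_0=0$. This is exactly where the assumption that the only root-of-unity eigenvalues of $A_E^t$ are the $\Pp_E$-th roots of unity is used. I would deduce the required non-vanishing from the cyclic block structure of the irreducible matrix $A_E$ relative to its period $\Pp_E$ (Perron--Frobenius theory), showing that $\sum_{v\in E^0}\delta_v$ pairs nontrivially with the eigenvalue-$1$ eigenspace of $A_E$. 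By comparison, the $p$-adic valuation bookkeeping entering the two invariants is routine.
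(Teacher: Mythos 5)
Up to your last paragraph, your proposal is essentially the paper's proof: the reverse implication is quoted from \cite[Proposition~3.11]{RobertsonRoutSims}, and the forward implication recovers $[\omega]$ from the divisibility of the unit class in the torsion-free quotient of $K_0$, exactly as in the paper's Theorem~\ref{omegarecovery}. Your prime-by-prime invariants $d_p$ are a repackaging of the paper's $D$, $\lcm(D)$ and $d$, and your derivation of $l=l'$ from $\rank G = lr$ with $r \geq 1$ is if anything more careful than the paper's count of summands. Granting $a_0 \neq 0$, your valuation computation $d_p = v_p(g) - v_p(l) + e_p$ and the ensuing cancellation are correct.

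The genuine gap is the claim in your final paragraph that the hypotheses force $a_0 = [1_{C^*(E)}]_0 + \tor_E \neq 0$ via Perron--Frobenius. This is false. Let $E$ have two vertices, two loops at each vertex, and one edge in each direction between them, so that $A_E = A_E^t = \bigl(\begin{smallmatrix} 2 & 1 \\ 1 & 2 \end{smallmatrix}\bigr)$. Then $E$ is finite and strongly connected, $\Pp_E = 1$, and the eigenvalues of $A_E^t$ are $3$ and $1$, so $1$ is an eigenvalue and the only root-of-unity eigenvalue is $1$: every hypothesis of Theorem~\ref{thm:classification} holds. Nevertheless $1-A_E^t = \bigl(\begin{smallmatrix} -1 & -1 \\ -1 & -1 \end{smallmatrix}\bigr)$, so $\sum_{v \in E^0}\delta_v = (1-A_E^t)\binom{-1}{0} \in \im(1-A_E^t)$, and $[1_{C^*(E)}]_0 = 0$ exactly, not merely modulo torsion. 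The flaw in your heuristic is that Perron--Frobenius positivity attaches to the eigenvector of the spectral radius, here $3$; when $\rho(A_E) > 1$ the eigenvalue-$1$ eigenvector (here $(1,-1)^t$) carries no positivity and can be orthogonal to $\mathbf{1}$. In this situation your $d_p$ is $\infty$ for every prime and detects nothing beyond $l$, so your argument cannot close. To be fair, you have put your finger on a soft spot of the paper itself: the paper's $d$ in Theorem~\ref{omegarecovery} is the lcm of $\{n \geq 1 : n \mid [1_{C^*(E)}]_0 + \tor_E\}$, which is well defined only when that set is finite, i.e.\ only when $a_0 \neq 0$, and the paper nowhere verifies this; for the graph above its recovery formula is likewise vacuous. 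But identifying the issue is not the same as repairing it: the Perron--Frobenius argument you appeal to does not exist, and in the degenerate case any proof of the theorem would have to exploit the torsion of $K_0$, which both you and the paper discard.
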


To prove this theorem we need some preliminary results.

\begin{lem}\label{lcmomega}
Let $D \subseteq \NN$. Suppose $|D| = m$ for some $1 \le m \le \infty$, enumerate $D$ in increasing order, $(d_1, d_2, \dots, d_m)$, and define a nondcreasing sequence $\lcm(D)$ by \[ \lcm(D) := (d_1, \lcm(d_1,d_2), \lcm(d_1, d_2, d_3), \dots\lcm(d_1,d_2, \dots, d_m), \lcm(d_1,d_2, \dots, d_m), \dots).\] Then $\lcm(D)$ is a multiplicative sequence such that $d_k | \lcm(D)$ for all $1 \le k \le m$. Moreover, if $\omega = (n_k)_{k=1}^\infty$ is another multiplicative sequence such that $d_k | \omega$ for all $1 \le k \le m$, then $[\lcm(D)]$ divides $[\omega]$.
\end{lem}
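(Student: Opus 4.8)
The plan is to verify the two assertions in turn, writing $L_k$ for the $k$-th term of $\lcm(D)$, so that $L_k = \lcm(d_1, \dots, d_k)$ for $1 \le k \le m$ and $L_k = \lcm(d_1, \dots, d_m)$ for $k > m$ in the case $m < \infty$. The backbone of the whole argument is the elementary universal property of the least common multiple: for natural numbers $a_1, \dots, a_k$ and $N$, one has $\lcm(a_1, \dots, a_k) \mid N$ if and only if $a_i \mid N$ for every $i$.

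First I would check that $\lcm(D)$ is a multiplicative sequence by showing $L_k \mid L_{k+1}$ for every $k$. When $k < m$ this is immediate, since $\lcm(d_1, \dots, d_k)$ divides $\lcm(d_1, \dots, d_{k+1})$; when $m < \infty$ and $k \ge m$ the two terms are equal, so the divisibility is trivial. That $d_k \mid \lcm(D)$ for each $1 \le k \le m$ then follows directly from the definition of divisibility of sequences: one takes the witnessing index to be $k$ itself and uses $d_k \mid \lcm(d_1, \dots, d_k) = L_k$.

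For the minimality statement I must show $\lcm(D) \mid \omega$, which by the definition of divisibility of supernatural numbers gives that $[\lcm(D)]$ divides $[\omega]$. Fixing $k$, I need to produce an index $j$ with $L_k \mid n_j$. By hypothesis each $d_i \mid \omega$, so for $1 \le i \le k$ there is an index $j_i$ with $d_i \mid n_{j_i}$; setting $J := \max\{j_1, \dots, j_k\}$ and using that $\omega$ is multiplicative (so $n_{j_i} \mid n_J$) yields $d_i \mid n_J$ for all $i \le k$ simultaneously. The universal property of the lcm then gives $L_k = \lcm(d_1, \dots, d_k) \mid n_J$. When $m < \infty$ and $k > m$ we have $L_k = L_m$, so the index $J$ obtained for the value $m$ already works. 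This establishes $\lcm(D) \mid \omega$.

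The argument is essentially routine; the only point requiring care is that the definition of $d_i \mid \omega$ supplies, for each $i$ separately, an index $j_i$ at which $d_i$ divides a term, and these indices need not coincide. The one genuine step is therefore the use of the multiplicativity of $\omega$ to collapse the finitely many indices $j_1, \dots, j_k$ into a single index $J$ at which all of $d_1, \dots, d_k$ divide $n_J$; after that the universal property of the lcm finishes the proof. The case $m = \infty$ needs no separate treatment, since then $L_k = \lcm(d_1, \dots, d_k)$ for all $k$ and the same reasoning applies verbatim.
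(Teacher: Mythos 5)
Your proof is correct and follows essentially the same route as the paper: multiplicativity and the divisibility $d_k \mid \lcm(D)$ are immediate from the definitions, and the key step for the last assertion is exactly the paper's — collapse the indices $j_1,\dots,j_k$ witnessing $d_i \mid \omega$ into a single index via a maximum, then apply the universal property of the lcm to get $\lcm(d_1,\dots,d_k) \mid n_J$. If anything, you are slightly more careful than the paper, since you make explicit the use of multiplicativity of $\omega$ (i.e.\ $n_{j_i} \mid n_J$) that the paper's proof leaves implicit.
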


\begin{proof}

Clearly $\lcm(D)_k | \lcm(D)_{k+1}$ for each $k \ge 1$. It is also clear that, for each $1 \le k \le m$, $d_k | \lcm(D)_l$ for all $l \geq k$, and so $d_k | \lcm(D)$.

For the final statement, fix $\omega$ such that $d_k | \omega$ for each $1 \le k \le m$. For each $1 \le k \le m$, there exist natural numbers $l_1, \dots, l_k$ such that $d_1 | n_{l_1}, \dots, d_k | n_{l_k}$. Let $l(k) = \max \{ l_1, \dots, l_k \}$. Then $d_i|n_{l(k)}$ for each $1 \le i \le k$, so $\lcm(d_1, \dots, d_k)|n_{l(k)}.$ \end{proof}

If $A$ is a free abelian group, $a \in A$ and $n \geq 1$, we write $n | a$ if there exists $a' \in A$ such
that $n a' = a$.

\begin{thm}\label{omegarecovery}

Fix a strongly connected finite directed graph $E$, and a generalised Bunce--Deddens
algebra $C^*(E,\omega)$. Suppose that the only roots of unity that are eigenvalues of $A_E^t$ are the $\Pp_E$-th roots of unity.
Set \[ D := \{ n \ge 1: n | \big( [1_{C^*(E,\omega)}]_0 + \tor_{(E,\omega)} \big) \in K_0(C^*(E,\omega)) / \tor_{(E,\omega)} \}\] and let \[d := \lcm \{n \ge 1 :n | \big( [1_{C^*(E)}]_0 + \tor_E \big) \in K_0(C^*(E)) / \tor_E \}.\] Then $[\omega] = [l \cdot\lcm(D)] / d.$

\end{thm}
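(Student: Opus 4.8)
The plan is to transport the divisibility data through the isomorphism $\Psi$ of Theorem~\ref{supchar} and then read off $[\omega]$ arithmetically. Write $B := K_0(C^*(E))/\tor_E$ (a finitely generated torsion-free, hence free, abelian group) and $u := [1_{C^*(E)}]_0 + \tor_E \in B$. Following the proof of Theorem~\ref{supchar} I would use the auxiliary sequence $\omega' = (n_k')$ with $n_1' = l$ and $[\omega'] = [\omega]$, so that $B[\tfrac{1}{\omega}] = B[\tfrac{1}{\omega'}]$ and $\Psi$ sends $[1_{C^*(E,\omega)}]_0 + \tor_{(E,\omega)}$ to the formal fraction $(u, \dots, u)/l \in \bigoplus_{i=1}^l B[\tfrac{1}{\omega'}]$. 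As $\Psi$ is an isomorphism it preserves divisibility, and divisibility in a finite direct sum is coordinatewise; since all coordinates coincide this yields
\[ D = \{ n \ge 1 : n \mid u/l \text{ in } B[\tfrac{1}{\omega'}] \}. \]

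The crux is to turn this into an integer condition. Using $B[\tfrac{1}{\omega'}] \cong B \otimes \ZZ[\tfrac{1}{\omega'}]$ (the remark after Lemma~\ref{adjoingroup}), or directly the equivalence relation of Lemma~\ref{adjoingroup}, I would show $n \mid u/l$ if and only if $\frac{n_j'}{l} u = n b$ for some $j \ge 1$ and $b \in B$; one clears the unit denominator $n_j'$ and uses that $B$ embeds into $B \otimes \ZZ[\tfrac{1}{\omega'}]$ because $B$ is free. Writing $u = d\,u_0$ with $d$ the content of $u$ and $u_0$ primitive (so that, as in the statement, $d = \lcm\{n : n \mid u\}$, the set of divisors of $d$, where we are in the case $u \ne 0$ guaranteed by the hypotheses of Theorem~\ref{supchar}), and using that a primitive vector extends to a basis of $B$, the divisibility $n \mid \frac{n_j'}{l} d\, u_0$ collapses to integer divisibility. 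Hence
\[ D = \Big\{ n \ge 1 : n \mid \frac{d\, n_j'}{l} \text{ for some } j \ge 1 \Big\}. \]

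Next I would identify $[\lcm(D)]$ with the supernatural number of $(d\,n_j'/l)_j$. Each $d\,n_j'/l$ is a natural number since $l = n_1' \mid n_j'$, and the sequence is multiplicative. Each of its terms lies in $D$, so Lemma~\ref{lcmomega} gives $d\,n_j'/l \mid \lcm(D)$ and thus $[(d\,n_j'/l)_j] \mid [\lcm(D)]$; conversely every element of $D$ divides some $d\,n_j'/l$, so the minimality clause of Lemma~\ref{lcmomega} gives the reverse divisibility. Therefore $[\lcm(D)] = [(d\,n_j'/l)_j]$.

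The proof then closes with supernatural-number arithmetic. Because $l \mid n_j'$ for every $j$, the sequence $(n_j'/l)_j$ has supernatural number $[\omega']/l = [\omega]/l$, so $[\lcm(D)] = d\,[\omega]/l$; multiplying the sequence $\lcm(D)$ termwise by $l$ multiplies its supernatural number by $l$, giving $[l\cdot\lcm(D)] = d\,[\omega]$, and dividing by the natural number $d$ gives $[l\cdot\lcm(D)]/d = [\omega]$. I expect the main obstacle to be the second paragraph — correctly characterising divisibility of the formal fraction $u/l$ inside the localised group $B[\tfrac{1}{\omega'}]$ and reducing it to the content $d$ via primitivity of $u_0$ and freeness of $B$; the remaining steps are bookkeeping with Lemma~\ref{lcmomega} and supernatural-number divisibility.
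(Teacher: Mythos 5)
Your proposal is correct and follows essentially the same route as the paper's proof: both transport the class of the unit through the isomorphism $\Psi$ of Theorem~\ref{supchar} to $\big([1_{C^*(E)}]_0+\tor_E,\dots,[1_{C^*(E)}]_0+\tor_E\big)/l$, reduce divisibility of this element to integer divisibility governed by $d$ (your content of the primitive decomposition $u = d\,u_0$ is exactly the paper's $\gcd(u_1,\dots,u_N)$), and deduce mutual divisibility of $[\lcm(D)]$ and $[d\omega/l]$ via Lemma~\ref{lcmomega}. The only cosmetic differences are that you work with a primitive vector in the free group $B$ instead of coordinates in a chosen basis $\ZZ^N$, and you package the paper's two-directional chain of equivalences as the explicit description $D = \{n : n \mid d n_j'/l \text{ for some } j\}$.
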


\begin{proof}

There is an isomorphism $\theta: K_0(C^*(E)) / \tor_E \to \ZZ^N$, where $N = \rank K_0(C^*(E))$.
Let $(u_1, \dots, u_N) := \theta([1_{C^*(E)}]_0 + \tor_E) \in \ZZ^N$.

We claim that $\gcd(u_1, \dots, u_N) = d$. Let $e_1, \dots, e_N$ be the generators of $\ZZ^N$,
and let $n \ge 1$ such that $n | u_i$ for each $1 \le i \le N$. Then $n$ divides $\sum_{i=1}^N u_i
\theta^{-1}(e_i) = \theta^{-1}(u_1, \dots, u_N) = [1_{C^*(E)}]_0 + \tor_E$. So $n | d$,
and hence $\gcd(u_1,\dots,u_N) | d$.

Now, fix $n \ge 1$ such that $n | ([1_{C^*(E)}]_0 + \tor_E)$. Then there exists $a \in K_0(C^*(E))$
such that $n a + \tor_E = [1_{C^*(E)}]_0 + \tor_E$. We then have that $n \theta(a + \tor_E)
= (u_1, \dots, u_N)$. So $n$ is a common divisor of $u_1, \dots, u_N$, and hence
$n|\gcd(u_1, \dots, u_N)$. So $\gcd(u_1, \dots, u_N)$ is a common multiple of
$\{n \ge 1:n | \big( [1_{C^*(E)}]_0 + \tor_E \big) \in K_0(C^*(E)) / \tor_E \}$,
giving $d | \gcd(u_1, \dots, u_N)$, and so $\gcd(u_1, \dots, u_N) = d$.

Next we claim that for $n \ge 1$, we have $n | \lcm(D)$ if and only if $n \in D$.
If $n \in D$, it is clear that $n|\lcm(D)$. For the other direction, suppose $n | \lcm(D)$.
Then there is an $i \ge 1$ such that $n | \lcm(d_1, \dots, d_i)$. Since $d_1, \dots, d_i \in D$,
we have that $\lcm(d_1, \dots, d_i)$ divides $[1_{C^*(E,\omega)}]_0 + \tor_{(E,\omega)}$,
and so $n \in D$.

We now show that $[\lcm(D)]$ divides $[d\omega/l]$. Fix $n \ge 1$.
Then \begin{align*} n \in D
& \iff n | \big( [1_{C^*(E, \omega)}]_0
+ \tor_{(E,\omega)} \big) \in K_0(C^*(E,\omega)) / \tor_{(E,\omega)} \\
& \iff n | \big([1_{C^*(E))}]_0 + \tor_E, \dots, [1_{C^*(E)}]_0+ \tor_E \big) / l \in
\bigoplus_{i = 1}^l \Big( K_0(C^*(E)) / \tor_E \Big) \Big[\frac{1}{\omega} \Big] \\
& \iff n | \big( [1_{C^*(E)}]_0 + \tor_E \big)/l \in \Big( K_0(C^*(E)) / \tor_E \Big) \Big[\frac{1}{\omega} \Big] \\
& \iff n | (u_1, \dots, u_N)/l \in \bigoplus_{i=1}^N \ZZ \Big[\frac{1}{\omega} \Big] \\
& \iff n | (d/l) \in \ZZ \Big[\frac{1}{\omega} \Big] \\
& \iff n | 1 \in \ZZ \Big[ \frac{1}{(d \omega)/l} \Big] \\
& \iff n |(d \omega/l). \end{align*} Hence $n | d \omega$ for all $n \in D$,
and so $[\lcm(D)]$ divides $[d \omega/l]$ by Lemma~\ref{lcmomega}.

To see that $[d \omega/l]$ divides $[\lcm(D)]$, fix $k \ge 1$. We have that $n_k | 1 \in \ZZ \big[ \frac{1}{\omega}
\big]$, so $(d n_k/l) | (d/l) \in \ZZ \big[ \frac{1}{\omega} \big]$. The above string of implications gives
us $(d n_k/l) | \lcm(D)$ for each $k \ge 1$, so $[d \omega/l]$ divides $[\lcm(D)]$, and the result follows.
\end{proof}

We now prove Theorem~\ref{thm:classification}.

\begin{proof}[Proof of Theorem~\ref{thm:classification}]

Suppose that $[\omega] = [\omega']$. Then $C^*(E,\omega) \cong C^*(E,\omega')$ by \cite[Proposition~3.11]{RobertsonRoutSims}.

Now suppose that $C^*(E,\omega) \cong C^*(E,\omega')$. Let $l = \gcd(\Pp_E,\omega)$ and $l' = \gcd(\Pp_E,\omega)$. Since $C^*(E,\omega) \cong C^*(E,\omega')$, the number of summands in Theorem~\ref{supchar} must be equal, so $l = l'$.

Let $d$ be as in Theorem~\ref{omegarecovery}. Let \[ D := \{ n \ge 1: n |
\big( [1_{C^*(E,\omega)}]_0 + \tor_{(E,\omega)} \big) \in K_0(C^*(E,\omega)) / \tor_{(E,\omega)} \}\]
and let \[ D' := \{ n \ge 1: n | \big( [1_{C^*(E,\omega')}]_0 + \tor_{(E,\omega')} \big) \in
K_0(C^*(E,\omega')) / \tor_{(E,\omega')} \}.\]

Fix $n \ge 1$. Since $C^*(E,\omega) \cong C^*(E,\omega')$, we have that $n$ divides
$[1_{C^*(E,\omega)}]_0 + \tor_{(E,\omega)}$ precisely when $n$ divides $[1_{C^*(E,\omega')}]_0
+ \tor_{(E,\omega')}$, so $D = D'$. By Theorem~\ref{omegarecovery} we have that $[\omega]
= [l \cdot \lcm (D) ]/d = [l \cdot \lcm(D')]/d = [\omega']$.
\end{proof}
\begin{rmk}
Theorem~\ref{thm:classification} says that for a given graph $E$ and $[\omega] \not = [\omega']$, we
have $C^*(E,\omega) \not = C^*(E,\omega')$. One might ask whether this can be extended to say that
given graphs $E$ and $F$ and given $[\omega] \not = [\omega']$, we must have $C^*(E,\omega) \not =
C^*(F,\omega')$. The following example demonstrates that the answer is no.
Let $C_1$ be the graph consisting of a single vertex connected by a single loop and let $C_3$ be the graph
with three vertices connected by a single cycle. Let $\omega = (3,6, 12, 24, \dots)$
and let $\omega' = (1,2,4, 8, 16, \dots)$. Note that $\omega = 3 \, \omega'$. Since $C_1(3) =C_3$,
we have that $C^*(C_1,\omega) \cong C^*(C_3,\omega')$. This illustrates why Theorem~\ref{thm:classification}
applies only to generalised Bunce--Deddens algebras constructed from the same graph.
\end{rmk}

\section{Acknowledgments}

The results in this article are from my PhD thesis. Thanks to my PhD supervisors Aidan Sims and Dave Robertson for their guidance and support during my PhD and during the writing of this article. It has been great learning from such generous and talented mathematicians. Thanks to Gunar Restorff for pointing out an error in Lemma~\ref{dirsuml}. Thanks to Mike Boyle for bringing \cite{KimOrmesRoush} to my attention and for a helpful email conversation about the spectra of nonnegative integer matrices. Thanks to Toke Meier Carlsen for helpful conversations.

\end{document}